\documentclass{amsart}

\usepackage{amssymb}
\usepackage{amsmath}
\usepackage{latexsym}
\usepackage{mathrsfs}
\usepackage[small,nohug,heads=vee]{diagrams}
\usepackage{amsthm}
\usepackage{verbatim}
\usepackage{graphicx}
\usepackage{epstopdf}
\usepackage{epsfig}
\usepackage{color}
\usepackage{breqn}
\usepackage{subfigure}
\usepackage{float}
\usepackage{extarrows}
\usepackage{color}
\usepackage{bm}
\usepackage[colorlinks,linkcolor=blue,anchorcolor=green,citecolor=red]{hyperref}
\usepackage{times}
\usepackage{fancyhdr}
\usepackage{extarrows}
\usepackage{amscd}

\setcounter{footnote}{6}

\newtheorem{theorem}{Theorem}
\newtheorem{remark}{Remark}

\newtheorem{lemma}{Lemma}
\newtheorem{problem}{Problem}

\newtheorem{algorithm}{Algorithm}

\makeatletter
\@addtoreset{equation}{section}
\makeatother

\usepackage{geometry}
\geometry{left=3.5cm,right=3.5cm,top=3.5cm,bottom=3.5cm}

\begin{document}
\title{Structure-preserving Finite Element Methods
for Stationary  MHD Models}

\author{Kaibo Hu}
\address{Beijing International Center for Mathematical Research, Peking University, Beijing 100871, P. R. China.}
\email{kaibo@pku.edu.cn}
\thanks{}

\author{Jinchao Xu}
\address{Center for Computational Mathematics and
  Applications and Department of Mathematics, The Pennsylvania State
  University, University Park, PA 16802, USA.}
\email{xu@math.psu.edu}
 \thanks{This material
    is based upon work supported in part by the US Department of
    Energy Office of Science, Office of Advanced Scientific Computing
    Research, Applied Mathematics program under Award Number
    DE-SC-0014400 and by Beijing International Center for Mathematical
    Research of Peking University, China.}

\subjclass[2010]{Primary 65N30, 65N12}

\date{}
\keywords{Divergence-free, Stationary, MHD equations, Finite Element.}

\begin{abstract}
  In this paper, we develop a class of mixed finite element scheme for
  stationary magnetohydrodynamics (MHD) models, using magnetic field
  $\bm B$ and current density $\bm j$ as the discretization variables.
  We show that the Gauss's law for the magnetic field, namely
  $\nabla\cdot\bm{B}=0$, and the energy law for the entire system are
  exactly preserved in the finite element schemes.  Based on some new
  basic estimates for $H^{h}(\mathrm{div})$, we show that the new
  finite element scheme is well-posed.  Furthermore, we show the
  existence of solutions to the nonlinear problems and the convergence
  of Picard iterations and finite element methods under some conditions.
\end{abstract}

\maketitle

\section{Introduction}

In this paper, we develop structure-preserving finite element
discretization for the following stationary incompressible
magnetohydrodynamics (MHD) system:
\begin{dgroup}[compact]
\label{mhd}
\begin{dmath}
( \boldsymbol{u} \cdot \nabla) \boldsymbol{u}
-R_{e}^{-1} \Delta \boldsymbol{u}
- S \boldsymbol{j} \times \boldsymbol{B}
+ \nabla p
= \boldsymbol{f} , 
\label{mhd1} 
\end{dmath} 
\begin{dmath}
\boldsymbol{j}
- R_{m}^{-1} \nabla \times \boldsymbol{B}
= \boldsymbol{0} ,
\label{mhd2} 
\end{dmath}
\begin{dmath}
\nabla \times \boldsymbol{E} = \boldsymbol{0} ,
\label{mhd3}
\end{dmath}
\begin{dmath}
\nabla \cdot \boldsymbol{B} = 0,
\label{mhd4} 
\end{dmath}
\begin{dmath}
\nabla \cdot \boldsymbol{u} = 0,
\label{mhd5} 
\end{dmath}
\end{dgroup}
where the Ohm's law holds:
\begin{equation}
\label{ohm1}
\bm{j}= \bm{E}+\bm{u}\times \bm{B}.
\end{equation}
Here $\bm{u}$ is the velocity of conducting fluids, $p$ is the
pressure, $\bm{B}$ is the magnetic field, $\bm{E}$ is the electric
field and $\bm{j}$ is the volume current density.  Dimensionless
parameters $R_{e}$, $R_{m}$ and $S$ are the Reynolds number of fluids,
magnetic field and the coupling number respectively.

In the study of magnetohydrodynamics (MHD) system, it is well-known
that the Gauss's law for the magnetic field, namely $\nabla\cdot
\bm{B}=0$, is an important condition in numerical computation of MHD
system \cite{Brackbill.J;Barnes.D.1980a,
  Dai.W;Woodward.P.1998b}. Nonzero divergence of $\bm{B}$ will
introduce a parallel force, which breaks the energy law.  In our
previous work Hu, Ma and Xu \cite{hu2014stable}, we proposed a
class of structure-preserving and energy-stable finite element
discretizations that exactly preserve the magnetic Gauss's law on the
discrete level for the time dependent MHD systems.  The goal of this
paper is to extend such discretizations to stationary cases.

Such a discretization is however not straightforward as the
time-dependent and the stationary systems have different structures.
In the time-dependent problem, the Faraday's law reads:
$$
\frac{\partial \bm{B}}{\partial t}+\nabla\times \bm{E}=\bm{0}.
$$
In \cite{hu2014stable}, we chose to keep the electric field
$\bm{E}$ and use the $H(\mathrm{curl})$-conforming finite element space
for $\bm{E}$ and $H(\mathrm{div})$-conforming finite element space for $\bm{B}$
to discretize the above Faraday's law as follows:
$$
\frac{\bm{B}^n-\bm{B}^{n-1}}{\Delta t}+\nabla\times \bm{E}^n=\bm{0}.
$$
This implies that $\nabla\cdot \bm{B}^n=0$ holds for all $n\ge 1$ as
long as it holds for $n=0$. 

In  the stationary case,  the Faraday's law reads:
$$
\nabla\times\bm{E}=\bm{0}.
$$
In this case, we can not directly apply the technique used in
\cite{hu2014stable} for the evolutionary case to preserve the
Gauss's law $\nabla\cdot \bm{B}=0$ exactly on the discrete level.
Instead we treat the Gauss's law as an independent equation in the
whole MHD system and we then introduce a Lagrange multiplier to
appropriately enforce this law on both the continuous and the discrete
level.

The idea of the use of Lagrange multiplier itself is not new (see
Sch\"{o}tzau \cite{Schotzau.D.2004a} and the reference therein) and
the novelty of our approach here lies in how this technique is used in
combination with the techniques developed in \cite{hu2014stable}.
In Sch\"{o}tzau \cite{Schotzau.D.2004a}, a magnetic multiplier $r\in
H^{1}(\Omega)/\mathbb{R}$ is used to impose the Gauss's law in the
following way:
$$
\int_{\Omega} \bm{B}\cdot \nabla s=0,\quad \forall s\in H^{1}(\Omega)/\mathbb{R}
$$
which does not guarantee that the Gauss's law holds strongly (namely
$\nabla\cdot\bm{B}_h=0$ point-wise in the domain) in the corresponding
discrete case.  The main difference in our approach is that the
Gauss's law will indeed be preserved on the discrete level strongly by
using appropriate finite element discretization of $\bm{B}$ so that
$\bm{B}_{h}$ is $H(\mathrm{div})$-conforming. On the other hand, the charge conservation $\nabla\cdot\bm{j}=0$ is preserved in a weak sense.  The finite
element de Rham sequence as studied in
\cite{Arnold.D;Falk.R;Winther.R.2006a, Hiptmair.R.2002a, Bossavit.A.1998a}
plays an important role in the construction and analysis in our paper.

MHD equations admit many different variational formulations which lead
to different mathematical properties and numerical efficiency on the
discrete level.  In most existing literature, variables $\bm{E}$ and
$\bm{j}$ are eliminated to reduce the size of the corresponding
discretized problems.  In \cite{hu2014stable}, we demonstrated
that it is advantageous to keep $\bm{E}$ and use it as an independent (or
intermediate) discretization variable in appropriate finite element
space.  Indeed, this approach may lead to larger discretized systems,
but these systems have better mathematical structures and may be
solved, as illustrated in \cite{ma2016robust}, more efficiently
than the corresponding smaller systems derived from traditional
schemes by eliminating both $\bm{E}$ and $\bm{j}$.

In this paper, we continue and extend this study for the stationary
problem. Instead of retaining $\bm{E}$ explicitly as a variable, we choose
$\bm{B}$ and $\bm{j}$ as electromagnetic variables motivated by
the energy law.


For simplicity of exposition, we use the following homogeneous
Dirichlet boundary conditions
\begin{align*}
\bm{u}&=\bm{0},\\
\bm{B}\cdot \bm{n}&=0,\\
\bm{j}\times \bm{n}&=\bm{0}.
\end{align*}
According to the Ohm's law that $\bm{j}=\bm{E}+\bm{u}\times \bm{B}$,
the above boundary conditions are obviously equivalent to
\begin{align*}
\bm{u}&=\bm{0},\\
\bm{B}\cdot \bm{n}&=0,\\
\bm{E}\times \bm{n}&=\bm{0}.
\end{align*}

The extension to non-homogeneous  boundary conditions  is straightforward and standard and
the relevant details will not be given in this paper.

The rest of the paper is organized as follows. In \S
\ref{sec:notation}, we present the notation and basic finite element
spaces used in the discussion. \S \ref{sec:estimates} demonstrates basic estimates
for $H^{h}(\mathrm{div}0)$ functions, including regularity and the discrete Poincar\'{e}'s inequality.
 In \S
\ref{sec:Bj}, a new formulation based on $\bm{B}$ and $\bm{j}$ is
studied. We prove the well-posedness based on an equivalent reduced system. In \S \ref{sec:convergence}, we prove the analysis of the proposed algorithms based on the key technical results established in \S \ref{sec:estimates}. This includes the convergence of Picard iterations and  the finite element discretizations.  Concluding remarks are given in \S \ref{sec:conclusion}.

\section{Notation and basic finite element spaces}\label{sec:notation}
In this section, we introduce some basic Sobolev spaces and their
corresponding finite element discretizations that will be used in the
rest of the paper.

{ We assume that $\Omega$ is a bounded Lipschitz
  polyhedron. For the ease of exposition, we further assume that
  $\Omega$ is contractable, i.e. there is no nontrivial harmonic
  form. 
  For general domains (non-simply-connected domain, non-connected
  boundary), we can solve the problem in the orthogonal complement of
  (discrete) harmonic forms, as in Arnold, Falk and Winther
  \cite{Arnold.D;Falk.R;Winther.R.2006a} for the Hodge
  Laplacian. Therefore such an assumption on the domain is to make the
  presentation more clear, and the methodology is also valid for
  general topology.}

 Using the standard notation for inner product and norm of the
{$L^{2}$} space
$$
(u,v):=\int_{\Omega}u\cdot v \mathrm{d}x,\quad
\|u\|:=\left(\int_{\Omega} \lvert u\rvert^2 \mathrm{d}x\right)^{1/2},  
$$
we define the following $H(D,\Omega)$ space with a given linear operator {$D$}:
$$
H(D,\Omega):=\{v\in L^2(\Omega), Dv\in L^2(\Omega)\},  
$$
and
$$
H_0(D,\Omega):=\{v\in H(D, \Omega), t_{D}v=0 \mbox{ on } \partial\Omega\},
$$
where $t_{D}$ is the trace operator:
$$
t_{D}v:=
\left\{
  \begin{array}{cc}
    v, & D=\mathrm{grad},\\
    v\times n, & D=\mathrm{curl},\\
    v\cdot n, & D=\mathrm{div}.
  \end{array}
\right.
$$
Here $H(\mathrm{grad}, \Omega)$ is a scalar function space, while $H(\mathrm{curl}, \Omega)$ and $H(\mathrm{div}, \Omega)$ are for vector valued functions. We
often use the following notation:
$$
L^2_0(\Omega):=\left\{v\in L^2(\Omega):  \int_\Omega v=0 \right\}.
$$
When {$D=\mathrm{grad}$}, we often use the notation:
$$
H^1(\Omega):=H(\mathrm{grad}, \Omega), \quad
H^1_0(\Omega):=H_0(\mathrm{grad}, \Omega).
$$
For clarity, the corresponding norms in $H(D, \Omega)$ are denoted by
$$
\|\bm{u}\|_{1}^{2}=\|\bm{u}\|^{2}+\|\nabla \bm{u}\|^{2},
$$
$$
\|\bm{F}\|_{\mathrm{curl}}^{2}:=\|\bm{F}\|^{2}+\|\nabla\times \bm{F}\|^{2},
$$
$$
\|\bm{C}\|_{\mathrm{div}}^{2}:= \|\bm{C}\|^{2}+\|\nabla\cdot \bm{C}\|^{2}.
$$

We will also use the space {$L^{p}$} with norm $\|\cdot\|_{0,p}$ given
by $\|{v}\|_{0,p}^p= \int_\Omega|v|^p$.  For a general Banach space
$\bm{Y}$ with a norm $\|\cdot\|_{\bm{Y}}$, the dual space
$\bm{Y}^{\ast}$ is equipped with the dual norm defined as
$$
\|\bm{h}\|_{\bm{Y}^{\ast}}:=\sup_{0 \neq \bm{y}\in \bm{Y}}\frac{\langle \bm{h}, \bm{y} \rangle}{\|\bm{y}\|_{\bm{Y}}}.
$$
For the special case that $\bm{Y}=H_0^1(\Omega)$,
$\bm{Y}^\ast=H^{-1}(\Omega)$ and the corresponding norm is denoted by
$\|\cdot\|_{-1}$, which is defined as
$$
\|\bm{f}\|_{-1}:=\sup_{0 \neq \bm{v}\in {H}_{0}^{1}(\Omega)^{3}}\frac{\langle \bm{f}, \bm{v} \rangle}{\|\nabla\bm{v}\|}.
$$

We will use $C_{1}$ to denote the constant in the following inequality, which is a consequence of Sobolev imbedding theorem and Poincar\'{e}'s inequality:
\begin{align}\label{def:C1}
\|{u}\|_{0,6}\leq C_{1}\|\nabla{u}\|,\quad \forall {u}\in H^{1}_{0}(\Omega).
\end{align}

Since the fluid convection frequently appears in the following
discussions, we introduce the trilinear form
$$
L(\bm{w}; \bm{u}, \bm{v}):=\frac{1}{2}[((\bm{w}\cdot \nabla)\bm{u},
\bm{v})- ((\bm{w}\cdot\nabla) \bm{v}, \bm{u}) ].
$$
When $\bm{w}$ is a known function, $L(\bm{w}; \bm{u}, \bm{v})$ is a
bilinear form of $\bm{u}$ and $\bm{v}$. This will occur in the Picard
iteration, where $\bm{w}$ is the velocity of the last iteration step.

Let $\mathcal{T}_{h}$ be a triangulation of $\Omega$, and we assume
that the mesh is regular and quasi-uniform, so that the inverse
estimates hold \cite{brenner2008mathematical}.  The finite element de
Rham sequence is an abstract framework to unify the above spaces and
their discretizations, see e.g. Arnold, Falk, Winther
\cite{Arnold.D;Falk.R;Winther.R.2006a,Arnold.D;Falk.R;Winther.R.2010a},
Hiptmair \cite{Hiptmair.R.2002a}, Bossavit \cite{Bossavit.A.1998a} for
more detailed discussions. Figure~\ref{exact-sequence} shows the
commuting diagrams we will use. Current density $\bm{j}$, magnetic
field $\bm{B}$ and the multiplier $r$ will be discretized in the last
three spaces respectively. Figure~\ref{deRham} shows the finite
elements of the lowest order.
\begin{figure}[ht!]
\begin{equation*}
\begin{CD}
H_0(\mathrm{grad})   @> {\mathrm{grad}} >> H_0(\mathrm{curl}) 
@>{\mathrm{curl}} >> H_0(\mathrm{div})  @> {\mathrm{div}} >> L_0^2  \\    
 @VV\Pi_h^{\mathrm{grad}} V @VV\Pi_h^{\mathrm{curl}} V @VV\Pi_h^{\mathrm{div}} V
@VV\Pi_h^0 V\\  
H^h_0(\mathrm{grad})  @>{\mathrm{grad}} >> H^h_0(\mathrm{curl})  @>{\mathrm{curl}} >> H^h_0(\mathrm{div}) @> {\mathrm{div}} >> L^{2,h}_0
\end{CD}
\end{equation*}
\caption{Continuous and discrete de Rham sequence}
\label{exact-sequence}
\end{figure}
\begin{figure}[ht!]
\begin{center}
\includegraphics[width=.8in]{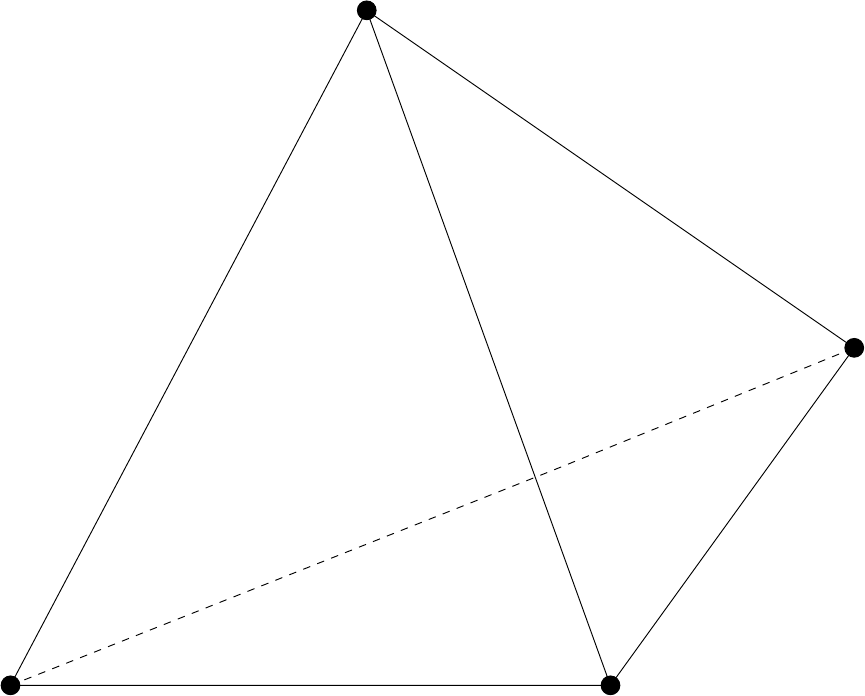}
\includegraphics[width=.8in]{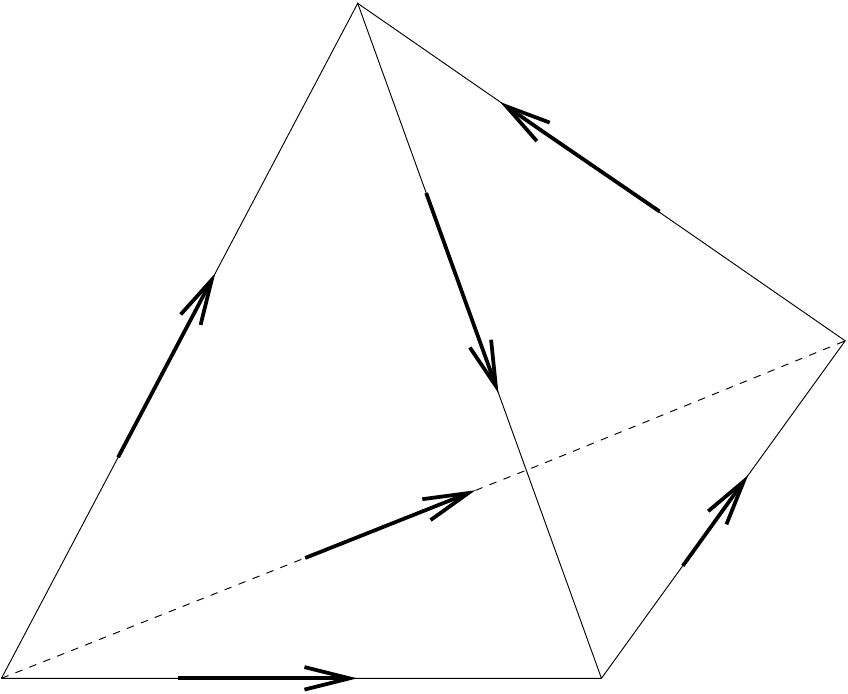}
\includegraphics[width=.8in]{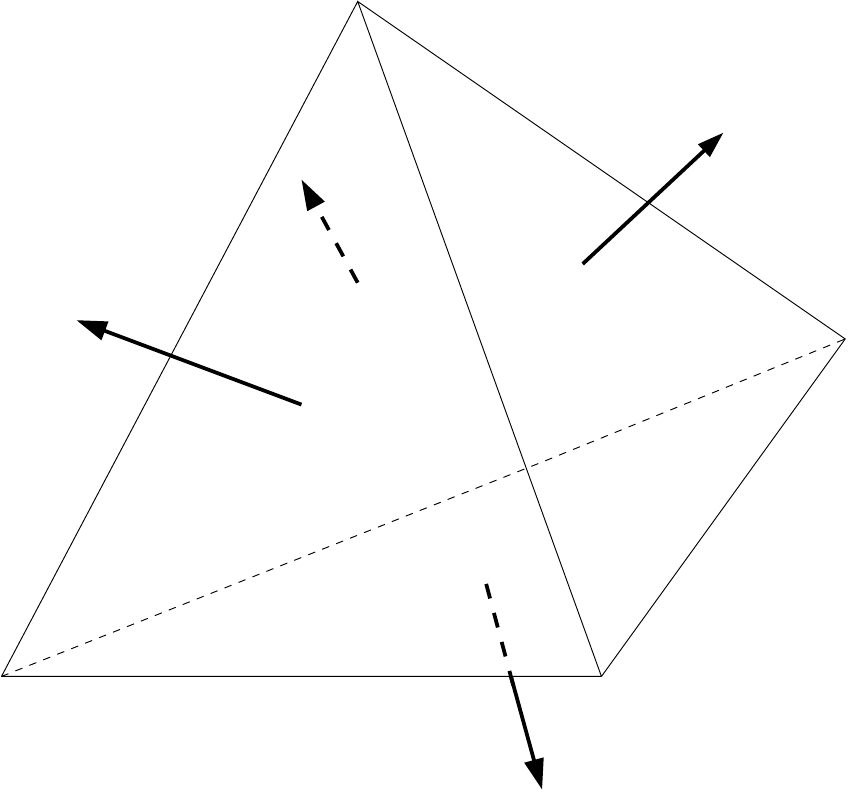}
\includegraphics[width=.8in]{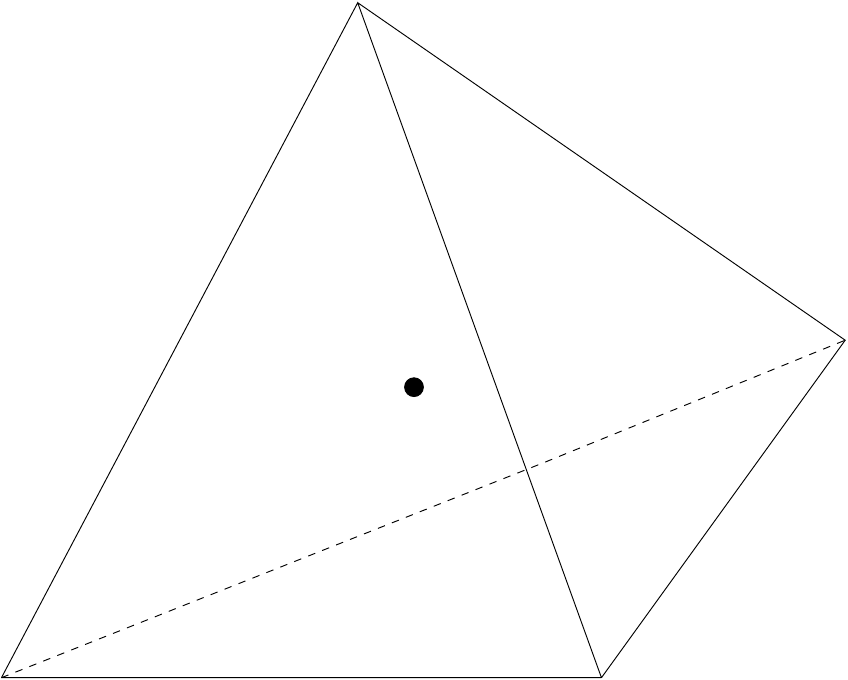}
\caption{DOF of finite element de Rham sequence of lowest order}
\label{deRham}
\end{center}
\end{figure}

As we shall see, $H(\mathrm{div})$ functions with vanishing divergence will play an important role in the study. So we define on the continuous level
$$
H_{0}(\mathrm{div}0, \Omega):=\{\bm{C}\in H_{0}(\mathrm{div}, \Omega): \nabla\cdot\bm{C}=0  \},
$$
and the finite element subspace
$$
H_{0}^{h}(\mathrm{div}0, \Omega):=\{\bm{C}_{h}\in H_{0}^{h}(\mathrm{div}, \Omega): \nabla\cdot\bm{C}_{h}=0  \}.
$$

 We use $\bm{V}_{h}$ to denote the finite element
subspace of velocity $\bm{u}_{h}$, and $Q_{h}$ for pressure
$p_{h}$. There are many existing stable pairs for $\bm{V}_{h}$ and
$Q_{h}$, for example, Taylor-Hood elements
\cite{Girault.V;Raviart.P.1986a, Boffi.D;Brezzi.F;Fortin.M.2013a}.
Spaces $H^{h}_{0}(\mathrm{div}, \Omega)$ and $ L_{0,h}^{2}(\Omega)$ are finite
element spaces from the discrete de Rham
sequence. For these spaces we use their explicit names for clarity, and use the notation 
$\bm{V}_{h}$ and $\bm{Q}_{h}$ for the fluid part to indicate that they are usually different from 
$H_{0,h}^{1}(\Omega)^{3}$ and $L^{2}_{0,h}(\Omega)$ in the de Rham sequence.

There is a unified theory for the discrete de Rham sequence  of arbitrary
order \cite{Boffi.D;Brezzi.F;Fortin.M.2013a,
  Arnold.D;Falk.R;Winther.R.2006a, Arnold.D;Falk.R;Winther.R.2010a}.
In the case $n = 3$, the lowest order elements can be represented as:
\begin{diagram}
\mathbb{R} & \rightarrow & \mathcal{P}_{3}\Lambda^{0} & \xrightarrow{d} & \mathcal{P}_{2} \Lambda^{1} & \xrightarrow{d} & \mathcal{P}_{1} \Lambda^{2} & \xrightarrow{d} & \mathcal{P}_{0} \Lambda^{3} & \rightarrow & 0 \\ 
\mathbb{R} & \rightarrow & \mathcal{P}_{2}\Lambda^{0} & \xrightarrow{d} & \mathcal{P}_{1} \Lambda^{1} & \xrightarrow{d} & \mathcal{P}_{1}^{-} \Lambda^{2} & \xrightarrow{d} & \mathcal{P}_{0} \Lambda^{3} & \rightarrow & 0 \\ 
\mathbb{R} & \rightarrow & \mathcal{P}_{2}\Lambda^{0} & \xrightarrow{d} & \mathcal{P}_{2}^{-} \Lambda^{1} & \xrightarrow{d} & \mathcal{P}_{1} \Lambda^{2} & \xrightarrow{d} & \mathcal{P}_{0} \Lambda^{3} & \rightarrow & 0 \\ 
\mathbb{R} & \rightarrow & \mathcal{P}_{1}\Lambda^{0} & \xrightarrow{d} & \mathcal{P}_{1}^{-} \Lambda^{1} & \xrightarrow{d} & \mathcal{P}_{1}^{-} \Lambda^{2} & \xrightarrow{d} & \mathcal{P}_{0} \Lambda^{3} & \rightarrow & 0 \\ 
\end{diagram} 
The correspondence between the language of differential forms and classical finite element methods is summarized in Table \ref{tab:mixed-fem}. 

To link the finite element spaces, below we will require $H_{0}^{h}(\mathrm{curl}, \Omega)$, $H_{0}^{h}(\mathrm{div}, \Omega)$ and $L^{2}_{0,h}(\Omega)$ to be in the same sequence.
  \begin{table}[H]
\centering
\begin{tabular}{c  c l}
\hline\noalign{\smallskip}
$k$ & $\Lambda_{h}^{k}(\Omega)$ & Classical finite element space \\[0.5ex]
\noalign{\smallskip}\hline\noalign{\smallskip}
0 & $\mathcal{P}_{r} \Lambda^{0}(\mathcal{T})$ & Lagrange elements of degree $\leq r$ \\
1 & $\mathcal{P}_{r} \Lambda^{1}(\mathcal{T})$ & Nedelec 2nd-kind $H(\mathrm{curl})$ elements of degree $\leq r$ \\
2 & $\mathcal{P}_{r} \Lambda^{2}(\mathcal{T})$ & Nedelec 2nd-kind $H(\mathrm{div})$ elements of degree $\leq r$ \\
3 & $\mathcal{P}_{r} \Lambda^{3}(\mathcal{T})$ & discontinuous elements of degree $\leq r$ \\
\noalign{\smallskip}\hline\noalign{\smallskip}
0 & $\mathcal{P}_{r}^{-} \Lambda^{0}(\mathcal{T})$ & Lagrange elements of degree $\leq r$ \\
1 & $\mathcal{P}_{r}^{-} \Lambda^{1}(\mathcal{T})$ & Nedelec 1st-kind $H(\mathrm{curl})$ elements of order $r-1$ \\
2 & $\mathcal{P}_{r}^{-} \Lambda^{2}(\mathcal{T})$ & Nedelec 1st-kind $H(\mathrm{div})$ elements of order $r-1$ \\
3 & $\mathcal{P}_{r}^{-} \Lambda^{3}(\mathcal{T})$ & discontinuous elements of degree $\leq r-1$ \\[1ex]
\noalign{\smallskip}\hline
\end{tabular}
\caption{Correspondences between finite element differential forms and the classical finite element spaces for $n = 3$ (from \cite{Arnold.D;Falk.R;Winther.R.2006a})}\label{tab:mixed-fem}
\end{table}

As we shall see, it is useful to group the spaces to define 
$$
\bm{X}_{h}:=\bm{V}_{h}\times H^{h}_{0}(\mathrm{curl}, \Omega)\times
H^{h}_{0}(\mathrm{curl}, \Omega)\times H_{0}^{h}(\mathrm{div},
\Omega).
$$
and group $Q_{h}\times L_{0, h}^{2}(\Omega)$ to define
$$
\bm{Y}_{h}:=Q_{h}\times L_{0, h}^{2}(\Omega).
$$

For the analysis, we also need to define a reduced space, where $\bm{j}_{h}$ and $\bm{\sigma}_{h}$ (introduced below) are eliminated:
$$
\tilde{\bm{X}}_{h}:=\bm{V}_{h}\times H^{h}_{0}(\mathrm{div}, \Omega).
$$

In order to define appropriate norms, we introduce the discrete curl operator on the discrete
level. For any $\bm{C}_{h}\in H^{h}_{0}(\mathrm{div}, \Omega)$, define
$\nabla_{h}\times \bm{C}_{h}\in H^{h}_{0}(\mathrm{curl}, \Omega)$:
$$
(\nabla_{h}\times \bm{C}_{h}, \bm{F}_{h})=(\bm{C}_{h}, \nabla\times \bm{F}_{h}), \quad \forall \bm{F}_{h}\in H^{h}_{0}(\mathrm{curl},\Omega).
$$
For any $\bm{w}_{h}\in H_{0}^{h}(\mathrm{curl}, \Omega)$, we define $\nabla_{h}\cdot\bm{w}_{h}\in H_{0}^{h}(\mathrm{grad}, \Omega)$ by
$$
(\nabla_{h}\cdot \bm{w}_{h}, v_{h})=-(\bm{w}_{h}, \nabla v_{h}), \quad \forall v_{h}\in H_{0}^{h}(\mathrm{grad}, \Omega).
$$

We define $\mathbb{P}: L^{2}(\Omega)\rightarrow H^{h}_{0}(\mathrm{curl},\Omega)$ to be the $L^{2}$ projection
$$
(\mathbb{P}\phi, \bm{F}_{h})=(\phi, \bm{F}_{h}), \quad\forall \bm{F}_{h}\in H^{h}_{0}(\mathrm{curl},\Omega), \phi\in L^{2}(\Omega).
$$
We further define $\|\cdot\|_{d}$ to be a modified norm of $H^{h}_{0}(\mathrm{div}, \Omega)$ by
$$
\|\bm{C}_{h}\|_{d}^{2}:=\|\bm{C}_{h}\|^{2}+\|\nabla\cdot \bm{C}_{h}\|^{2}+\|\nabla_{h}\times \bm{C}_{h}\|^{2}.
$$ 
Moreover, $\|\cdot\|_{c}$ for $H^{h}_{0}(\mathrm{curl}, \Omega)$ is simply the $L^{2}$ norm:
$$
\|\bm{F}_{h}\|_{c}^{2}:=\|\bm{F}_{h}\|^{2}.
$$
There are some motivations to define such a stronger norm for $H_{0}^{h}(\mathrm{div}, \Omega)$ and weaker norm for $H_{0}^{h}(\mathrm{curl}, \Omega)$ space. One technical reason is that we want the nonlinear term $\nabla\times (\bm{u}_{h}\times \bm{B}_{h})$ to be bounded in some proper discretization.  But generally $\bm{u}_{h}\times \bm{B}_{h}$ may not belong to $H_{0}^{h}(\mathrm{curl})$ for $\bm{u}_{h}\in H_{0}^{1}(\Omega)^{3}$ and $\bm{B}_{h}\in H_{0}(\mathrm{div}, \Omega)$. So we choose to move the curl operator to the $H^{h}_{0}(\mathrm{div})$ test function in the variational formulation to get $(\bm{u}_{h}\times \bm{B}_{h}, \nabla_{h}\times \bm{C}_{h})$.  Therefore we add the weak curl norm to $H^{h}_{0}(\mathrm{div}, \Omega)$ space. Another motivation can be seen in the energy estimate: on the continuous level, the energy estimate contains $\bm{j}=R_{m}^{-1}\nabla\times \bm{B}$, but not $\nabla\times \bm{j}$. So it is natural to use $L^{2}$ norm for the discrete variable $\bm{j}_{h}$.

Now we define the norms for various product spaces.   For $\bm{Y}_{h}$ space,
we define
$$
\|(q,r)\|_{\bm{Y}}^{2}:=\|q\|^{2}+\|r\|^{2}.
$$
For the other product spaces,   we define 
$$
\|(\bm{u}_{h}, \bm{j}_{h}, \bm{\sigma}_{h},
\bm{B}_{h})\|_{\bm{X}}^{2}:=\|\bm{u}_{h}\|_{1}^{2}+\|\bm{j}_{h}\|^{2}_{c}+\|\bm{\sigma}_{h}\|^{2}_{c}+\|\bm{B}_{h}\|_{d}^{2},\quad (\bm{u}_{h}, \bm{j}_{h}, \bm{\sigma}_{h},
\bm{B}_{h})\in \bm{X}_{h},
$$
and
$$
\|(\bm{u}_{h}, \bm{B}_{h})\|_{\tilde{\bm{X}}}^{2}:=\|\bm{u}_{h}\|_{1}^{2}+\|\bm{B}_{h}\|_{d}^{2}, \quad (\bm{u}_{h}, \bm{B}_{h})\in \tilde{\bm{X}}_{h}.
$$

\section{Estimates for divergence-free vector fields}\label{sec:estimates}
In this section, we will establish some new regularity results for the
strong divergence-free space $H^{h}_{0}(\mathrm{div}0, \Omega)$ which
will be used for our forthcoming analysis.  The main ingredients used
in our analysis include some regularity
results for the space
$\bm{Z}:=H(\mathrm{curl}, \Omega)\cap H_{0}(\mathrm{div}0, \Omega)$
(c.f. \cite{Hiptmair.R.2002a, Schotzau.D.2004a}),  and for the space
$$
\bm{X}_{h}^{c}:=\{\bm{w}\in
H_{0}^{h}(\mathrm{curl},\Omega):\nabla_{h}\cdot\bm{w}_{h}=0 \}
$$
(c.f. \cite{Hiptmair.R.2002a, Schotzau.D.2004a}), together with some appropriately defined ``Hodge
mapping'' ($H_{d}$ below) that connects $H^{h}_{0}(\mathrm{div}0,
\Omega)$ with $\bm{Z}$.

We first give a preliminary result based on Hodge decomposition:
\begin{lemma}\label{curl-Z}
$$
\nabla\times \bm{Z}=H(\mathrm{div}0, \Omega)=\nabla\times H(\mathrm{curl}, \Omega).
$$
\end{lemma}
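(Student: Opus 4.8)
The plan is to prove the two asserted equalities at once by establishing the cyclic chain of inclusions
$$
\nabla\times \bm{Z}\ \subseteq\ \nabla\times H(\mathrm{curl},\Omega)\ \subseteq\ H(\mathrm{div}0,\Omega)\ \subseteq\ \nabla\times \bm{Z},
$$
which forces all three spaces to coincide. The first two inclusions are routine. Since $\bm{Z}\subseteq H(\mathrm{curl},\Omega)$ by definition, applying $\nabla\times$ gives $\nabla\times\bm{Z}\subseteq \nabla\times H(\mathrm{curl},\Omega)$ immediately. For the second inclusion, any $\bm{F}\in H(\mathrm{curl},\Omega)$ has $\nabla\times\bm{F}\in L^2(\Omega)$ and $\nabla\cdot(\nabla\times\bm{F})=0$ in the distributional sense, so $\nabla\times\bm{F}\in H(\mathrm{div}0,\Omega)$. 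Thus the real content of the lemma is the final inclusion $H(\mathrm{div}0,\Omega)\subseteq \nabla\times\bm{Z}$, namely that a divergence-free field admits a vector potential that can be taken inside $\bm{Z}$.

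To produce such a potential I would proceed in two stages. First, I invoke exactness of the boundary-condition-free $L^2$ de Rham complex at the $H(\mathrm{div})$ slot; this is exactly where the contractibility hypothesis on $\Omega$ is used, for the vanishing of the relevant cohomology guarantees that every $\bm{u}\in H(\mathrm{div}0,\Omega)$ can be written as $\bm{u}=\nabla\times\bm{w}_0$ for some $\bm{w}_0\in H(\mathrm{curl},\Omega)$. This $\bm{w}_0$ is in general neither divergence-free nor endowed with vanishing normal trace, so it does not yet belong to $\bm{Z}$, and the remaining step is to correct it without changing its curl.

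The correction is the crux of the argument, and it is carried out via the Hodge (Helmholtz) decomposition, which is the main technical input. One uses the $L^2$-orthogonal splitting
$$
L^2(\Omega)^3=\nabla H^1(\Omega)\oplus H_0(\mathrm{div}0,\Omega),
$$
where $\nabla H^1(\Omega)$ is closed thanks to the Poincar\'{e} inequality on $H^1(\Omega)/\mathbb{R}$, and where the orthogonal complement of $\nabla H^1(\Omega)$ is characterized as the set of $\bm{v}$ with $(\bm{v},\nabla\psi)=0$ for all $\psi\in H^1(\Omega)$. Testing first against $\psi\in H^1_0(\Omega)$ yields $\nabla\cdot\bm{v}=0$, and then against general $\psi\in H^1(\Omega)$ yields $\bm{v}\cdot\bm{n}=0$ weakly, so that this complement is precisely $H_0(\mathrm{div}0,\Omega)$. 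Decomposing $\bm{w}_0=\nabla\phi+\bm{w}$ with $\phi\in H^1(\Omega)$ and $\bm{w}\in H_0(\mathrm{div}0,\Omega)$, the component $\bm{w}=\bm{w}_0-\nabla\phi$ remains in $H(\mathrm{curl},\Omega)$ and, because $\nabla\times\nabla\phi=\bm{0}$, satisfies $\nabla\times\bm{w}=\nabla\times\bm{w}_0=\bm{u}$. Hence $\bm{w}\in H(\mathrm{curl},\Omega)\cap H_0(\mathrm{div}0,\Omega)=\bm{Z}$ and $\bm{u}=\nabla\times\bm{w}\in\nabla\times\bm{Z}$, which closes the chain.

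I expect the main obstacle to be precisely this last inclusion, and more specifically the need to upgrade a bare vector potential into one lying in $\bm{Z}$: handling the normal-trace condition directly for a general $H(\mathrm{curl})$ field is delicate, and the virtue of the Helmholtz-decomposition route is that it enforces both $\nabla\cdot\bm{w}=0$ and $\bm{w}\cdot\bm{n}=0$ simultaneously and weakly, sidestepping explicit trace manipulations. An alternative would be to cite directly a classical vector-potential theorem for simply connected domains, which produces $\bm{w}\in H^1(\Omega)$ with $\nabla\times\bm{w}=\bm{u}$, $\nabla\cdot\bm{w}=0$ and $\bm{w}\cdot\bm{n}=0$, giving $\bm{w}\in\bm{Z}$ in one step; I would still prefer the decomposition argument above since it is self-contained and transparently "based on Hodge decomposition."
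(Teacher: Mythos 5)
Your proof is correct and follows essentially the same route as the paper: both arguments combine exactness of the boundary-condition-free de Rham complex (giving $H(\mathrm{div}0,\Omega)=\nabla\times H(\mathrm{curl},\Omega)$ on a contractible domain) with the Hodge/Helmholtz decomposition $L^2(\Omega)^3=\nabla H^1(\Omega)\oplus H_0(\mathrm{div}0,\Omega)$ to strip off the curl-free part. The only difference is presentational: the paper decomposes the whole space, writing $H(\mathrm{curl},\Omega)=H(\mathrm{curl}0,\Omega)+\bm{Z}$ and then applying the curl, whereas you perform the same gradient correction element-wise on a single vector potential.
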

\begin{proof}
From the Hodge decomposition for $L^{2}(\Omega)^{3}$:
$$
L^{2}(\Omega)^{3}=\nabla H^{1}(\Omega)+\nabla\times H_{0}(\mathrm{curl}, \Omega)=H(\mathrm{curl}0, \Omega)+H_{0}(\mathrm{div}0, \Omega).
$$
Here $H(\mathrm{curl}0, \Omega):=\{\bm{F}\in H(\mathrm{curl}, \Omega): \nabla\times \bm{F}=\bm{0}  \}.$

Therefore 
\begin{align}\nonumber
H(\mathrm{curl}, \Omega)&=L^{2}(\Omega)^{3}\cap H(\mathrm{curl}, \Omega)\\&\nonumber
=H(\mathrm{curl}0, \Omega)+H_{0}(\mathrm{div}0, \Omega)\cap H(\mathrm{curl}, \Omega)\\&
=H(\mathrm{curl}0, \Omega)+\bm{Z}.
\end{align}

This implies 
$$
H(\mathrm{div}0, \Omega)=\nabla\times H(\mathrm{curl}, \Omega)=\nabla\times \bm{Z}.
$$
\end{proof}
We now define the ``Hodge mapping'' for $H^{h}_{0}(\mathrm{div}0)$ functions.  Let $H_{d}: H^{h}_{0}(\mathrm{div}0)\rightarrow \bm{Z}$ be defined by
\begin{align}\label{def:Hd}
\left(\nabla\times (H_{d}\bm{B}_{h}), \nabla\times \bm{v} \right)=\left (\nabla_{h}\times \bm{B}_{h}, \nabla\times \bm{v} \right ), \quad \forall \bm{v}\in \bm{Z}, \forall \bm{B}_{h}\in H^{h}_{0}(\mathrm{div} 0, \Omega).
\end{align}
Due to the Poincar\'{e}'s inequality of $\bm{Z}$, $\|\bm{z}\|\lesssim \|\nabla\times \bm{z}\|$ holds for any $\bm{z}\in \bm{Z}$. Therefore \eqref{def:Hd} uniquely defines $H_{d}\bm{B}_{h}$.

From Lemma \ref{curl-Z}, we have $\nabla\times \bm{Z}=H(\mathrm{div}0)$. Therefore
\begin{align}\label{def-Z-2}
\left(\nabla\times (H_{d}\bm{B}_{h}), \bm{w} \right)=\left (\nabla_{h}\times \bm{B}_{h}, \bm{w} \right ), \quad \forall \bm{w}\in H(\mathrm{div}0).
\end{align}
In particular, choosing $\bm{w}=\nabla\times (H_{d}\bm{B}_{h})$, we see
$$
\|\nabla\times (H_{d}\bm{B}_{h})\|\leq \|\nabla_{h}\times \bm{B}_{h}\|.
$$

In the following, we will use $\tilde{\bm{B}}$ to denote the continuous lifting of $\bm{B}_{h}$:
$$
\tilde{\bm{B}}:=H_{d}\bm{B}_{h}.
$$
Moreover, 
$
H_{c}: \bm{X}_{h}^{c}\rightarrow H_{0}(\mathrm{curl}, \Omega)\cap H(\mathrm{div}, \Omega)
$
 is the Hodge mapping for $H_{0}^{h}(\mathrm{curl},\Omega)$ \cite{Hiptmair.R.2002a, Schotzau.D.2004a}, defined by 
 $$
 \nabla\times (H_{c}\bm{F}_{h})=\nabla\times \bm{F}_{h}, \quad \forall \bm{F}_{h}\in \bm{X}_{h}^{c}.
$$
We also use the notation $\tilde{\bm{F}}$ to denote $H_{c}\bm{F}_{h}$ when $\bm{F}_{h}\in \bm{X}_{h}^{c}$.

\begin{lemma}[Approximation of $H_d$]\label{Hd-approximation}
 If $\Omega$ is a bounded polyhedral domain in $\mathbb{R}^3$, there exists  $0< \delta(\Omega) \leq \frac{1}{2}$ such that
$$
\|\bm{B}_{h}-{H_d}\bm{B}_h\|\lesssim h^{\frac{1}{2} + \delta}  \|\nabla_{h} \times \bm{B}_{h}\|,
$$
for all $\bm{B}_h \in H_{0}^{h}(\mathrm{div}0, \Omega)$. 
\end{lemma}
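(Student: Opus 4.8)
The plan is to estimate $\bm{e}:=\bm{B}_h-H_d\bm{B}_h$ in $L^2$ by a duality argument that trades the norm of this divergence-free error for the interpolation error of a regular potential, the sharp regularity of $\bm{Z}$ being the essential ingredient. Set $\tilde{\bm{B}}:=H_d\bm{B}_h\in\bm{Z}$ and $s:=\tfrac12+\delta$. Both $\bm{B}_h$ and $\tilde{\bm{B}}$ are divergence-free with vanishing normal trace, so $\bm{e}\in H_0(\mathrm{div}0,\Omega)\subset H(\mathrm{div}0,\Omega)$; by Lemma \ref{curl-Z} there is a unique $\bm{\psi}\in\bm{Z}$ with $\nabla\times\bm{\psi}=\bm{e}$, and the Poincar\'e inequality on $\bm{Z}$ together with the regularity embedding $\bm{Z}\hookrightarrow H^{s}(\Omega)^3$ yields $\|\bm{\psi}\|_{H^s}\lesssim\|\nabla\times\bm{\psi}\|=\|\bm{e}\|$.

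Starting from $\|\bm{e}\|^2=(\bm{e},\nabla\times\bm{\psi})$, I would split off the commuting interpolant $\Pi_h^{\mathrm{curl}}\bm{\psi}\in H_0^h(\mathrm{curl},\Omega)$ (well defined because $s>\tfrac12$):
\[
\|\bm{e}\|^2=(\bm{e},\nabla\times\Pi_h^{\mathrm{curl}}\bm{\psi})+\bigl(\bm{e},\nabla\times(\bm{\psi}-\Pi_h^{\mathrm{curl}}\bm{\psi})\bigr)=:\mathrm{I}+\mathrm{II}.
\]
For the consistency term $\mathrm{I}$, expand $\bm{e}=\bm{B}_h-\tilde{\bm{B}}$: the definition of the discrete curl gives $(\bm{B}_h,\nabla\times\Pi_h^{\mathrm{curl}}\bm{\psi})=(\nabla_h\times\bm{B}_h,\Pi_h^{\mathrm{curl}}\bm{\psi})$, while integrating by parts and using the vanishing tangential trace $\Pi_h^{\mathrm{curl}}\bm{\psi}\times\bm{n}=\bm{0}$ gives $(\tilde{\bm{B}},\nabla\times\Pi_h^{\mathrm{curl}}\bm{\psi})=(\nabla\times\tilde{\bm{B}},\Pi_h^{\mathrm{curl}}\bm{\psi})$. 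Hence $\mathrm{I}=(\nabla_h\times\bm{B}_h-\nabla\times\tilde{\bm{B}},\Pi_h^{\mathrm{curl}}\bm{\psi})$. Since $\bm{\psi}\in\bm{Z}\subset H(\mathrm{div}0,\Omega)$, relation \eqref{def-Z-2} shows that $\nabla_h\times\bm{B}_h-\nabla\times\tilde{\bm{B}}$ pairs to zero with $\bm{\psi}$, so $\mathrm{I}=(\nabla_h\times\bm{B}_h-\nabla\times\tilde{\bm{B}},\Pi_h^{\mathrm{curl}}\bm{\psi}-\bm{\psi})$. As $\nabla\times\tilde{\bm{B}}$ is the $L^2$-projection of $\nabla_h\times\bm{B}_h$ onto $H(\mathrm{div}0,\Omega)$ by \eqref{def-Z-2}, we have $\|\nabla_h\times\bm{B}_h-\nabla\times\tilde{\bm{B}}\|\le\|\nabla_h\times\bm{B}_h\|$, whence $|\mathrm{I}|\lesssim\|\nabla_h\times\bm{B}_h\|\,\|\bm{\psi}-\Pi_h^{\mathrm{curl}}\bm{\psi}\|\lesssim h^{s}\|\nabla_h\times\bm{B}_h\|\,\|\bm{e}\|$.

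For the approximation term $\mathrm{II}$, the commuting property $\nabla\times\Pi_h^{\mathrm{curl}}\bm{\psi}=\Pi_h^{\mathrm{div}}(\nabla\times\bm{\psi})=\Pi_h^{\mathrm{div}}\bm{e}$ together with $\Pi_h^{\mathrm{div}}\bm{B}_h=\bm{B}_h$ rewrites it as $\mathrm{II}=-(\bm{e},\tilde{\bm{B}}-\Pi_h^{\mathrm{div}}\tilde{\bm{B}})$; the interpolation estimate and the regularity bound $\|\tilde{\bm{B}}\|_{H^s}\lesssim\|\nabla\times\tilde{\bm{B}}\|\le\|\nabla_h\times\bm{B}_h\|$ then give $|\mathrm{II}|\lesssim h^{s}\|\nabla_h\times\bm{B}_h\|\,\|\bm{e}\|$. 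Adding the two bounds and cancelling one factor of $\|\bm{e}\|$ produces $\|\bm{e}\|\lesssim h^{1/2+\delta}\|\nabla_h\times\bm{B}_h\|$. The hard part is the low regularity exponent $s=\tfrac12+\delta$: I must rely on the sharp regularity of $\bm{Z}$ on a general Lipschitz domain (no better than $H^{1/2+\delta}$, in contrast to the $H^1$-regularity available on convex or smooth domains) and verify that the canonical interpolants $\Pi_h^{\mathrm{curl}}$ and $\Pi_h^{\mathrm{div}}$ remain well defined, commuting, and of optimal order on $H^{s}$ with $s>\tfrac12$, which is precisely what forces $\delta>0$; the remaining manipulations are routine.
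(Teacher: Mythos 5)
Your argument is correct in substance, but it takes a genuinely different route from the paper. The paper works with a \emph{discrete} potential: it observes that $\bm{B}_h-\Pi^h_{\mathrm{div}}\tilde{\bm{B}}$ is a divergence-free finite element function, writes it as $\nabla\times\bm{\phi}_h$ with $\bm{\phi}_h\in\bm{X}_h^c$, lifts $\bm{\phi}_h$ to $\tilde{\bm{\phi}}=H_c\bm{\phi}_h\in\bm{Z}$, and invokes the known approximation property of the $H(\mathrm{curl})$ Hodge mapping $H_c$ from \cite{Hiptmair.R.2002a,Schotzau.D.2004a} to control $\|\bm{\phi}_h-\tilde{\bm{\phi}}\|$. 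You instead work with a \emph{continuous} potential: you write the error $\bm{e}=\bm{B}_h-H_d\bm{B}_h$ itself as $\nabla\times\bm{\psi}$ with $\bm{\psi}\in\bm{Z}$ (via Lemma \ref{curl-Z}), and run a duality argument in which the commuting projection $\Pi_h^{\mathrm{curl}}\bm{\psi}$, the orthogonality \eqref{def-Z-2}, and the embedding $\bm{Z}\hookrightarrow H^{1/2+\delta}$ do the work. The two proofs share the second term (your $\mathrm{II}$ is exactly the paper's $(\bm{B}_h-\tilde{\bm{B}},\Pi^h_{\mathrm{div}}\tilde{\bm{B}}-\tilde{\bm{B}})$, estimated identically), but for the main term they are mirror images: the paper lifts a discrete field up to $\bm{Z}$ and cites the $H_c$ approximation lemma, while you project a $\bm{Z}$-field down to the finite element space and use interpolation plus regularity directly. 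Your version is more self-contained, since the $H_c$ approximation property never enters; the price is that you need a commuting projection on the $H(\mathrm{curl})$ level as well, and you use the regularity of $\bm{Z}$ twice (for $\bm{\psi}$ and for $\tilde{\bm{B}}$) instead of once.

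One point needs repair, and it is exactly the point you defer as ``routine.'' The operators $\Pi_h^{\mathrm{curl}}$, $\Pi_h^{\mathrm{div}}$ in your argument cannot be the \emph{canonical} interpolants: your potential $\bm{\psi}$ lies only in $H^{1/2+\delta}(\Omega)$ and its curl $\nabla\times\bm{\psi}=\bm{e}$ is merely $L^2$ (not piecewise smooth), so the canonical edge interpolant is at best delicately defined and, more importantly, does not satisfy an $L^2$ error bound of order $h^{1/2+\delta}$ in terms of $\|\bm{\psi}\|_{H^{1/2+\delta}}$ alone---its estimates require control of the curl as well. The fix is to take $\Pi_h^{\mathrm{curl}}$ and $\Pi_h^{\mathrm{div}}$ to be $L^2$-bounded commuting cochain projections, e.g.\ those of \cite{falk2014local}, which is precisely what the paper does for $\Pi^h_{\mathrm{div}}$ in its own proof: these are defined on all of $L^2$, commute with the exterior derivative (giving your identity $\nabla\times\Pi_h^{\mathrm{curl}}\bm{\psi}=\Pi_h^{\mathrm{div}}\bm{e}$ and preserving the boundary conditions and the identity $\Pi_h^{\mathrm{div}}\bm{B}_h=\bm{B}_h$), and deliver $\|\bm{\psi}-\Pi_h^{\mathrm{curl}}\bm{\psi}\|\lesssim\inf_{\bm{v}_h}\|\bm{\psi}-\bm{v}_h\|\lesssim h^{1/2+\delta}\|\bm{\psi}\|_{H^{1/2+\delta}}$ by quasi-optimality. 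With that substitution your $\delta>0$ is forced only by the regularity of $\bm{Z}$ on Lipschitz domains, not by any well-definedness issue, and the proof closes as you wrote it.
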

\begin{proof}

  We define $\Pi_{\mathrm{div}}^h$ to be the bounded cochain
  projection to $H^{h}_{0}(\mathrm{div}, \Omega)$ \cite{falk2014local}.  Note that $\nabla
  \cdot
  \left(\bm{B}_{h}-\Pi_{\mathrm{div}}^{h}\tilde{\bm{B}}\right)=0$ due
  to the commuting diagram. Therefore there exists $\bm{\phi}_{h}\in
  \bm{X}_{h}^{c}$ and the corresponding lifting $\tilde{\bm{\phi}}:=H_{c}\bm{\phi}_{h}\in H_{0}(\mathrm{curl}, \Omega)\cap H(\mathrm{div}, \Omega)$ such that
  $\bm{B}_{h}-\Pi_{\mathrm{div}}^{h}\tilde{\bm{B}}=\nabla \times
  \bm{\phi}_{h}=\nabla\times \tilde{\bm{\phi}}$ and  there exists  a positive constant $0< \delta(\Omega) \leq \frac{1}{2}$ such that
  \begin{equation}
    \label{phi-phi}
\|\bm{\phi}_{h}-\tilde{\bm{\phi}}\|\lesssim h^{\frac{1}{2}+\delta}\|\nabla \times \bm{\phi}_{h}\|=h^{\frac{1}{2}+\delta}\|\bm{B}_{h}-\Pi_{\mathrm{div}}^{h}\tilde{\bm{B}}\|,
  \end{equation}
  where the first inequality is from the approximation property of $H_{c}$.

From \eqref{def-Z-2}, we have
$$
(\nabla_{h}\times \bm{B}_{h}, \tilde{\bm{\phi}})=(\nabla\times \tilde{\bm{B}}, \tilde{\bm{\phi}})=(\tilde{\bm{B}}, \nabla\times \tilde{\bm{\phi}}),
$$
and
$$
(\bm{B}_{h}, \nabla\times \bm{\phi}_{h})=(\nabla_{h}\times \bm{B}_{h}, \bm{\phi}_{h})=(\nabla_{h}\times \bm{B}_{h}, \bm{\phi}_{h}-\tilde{\bm{\phi}})+(\tilde{\bm{B}}, \nabla\times \tilde{\bm{\phi}}).
$$
Namely,
$$
(\bm{B}_{h}-\tilde{\bm{B}}, \bm{B}_{h}-\Pi_{\mathrm{div}}^{h}\tilde{\bm{B}})=(\nabla_{h}\times \bm{B}_{h}, \bm{\phi}_{h}-\tilde{\bm{\phi}}).
$$
Thus
\begin{align*}
\|\bm{B}_{h}-\tilde{\bm{B}}\|^{2}&=(\bm{B}_{h}-\tilde{\bm{B}}, \bm{B}_{h}-\Pi_{\mathrm{div}}^{h}\tilde{\bm{B}})+(\bm{B}_{h}-\tilde{\bm{B}}, \Pi_{\mathrm{div}}^{h}\tilde{\bm{B}}-\tilde{\bm{B}})\\&
=(\nabla_{h}\times \bm{B}_{h}, \bm{\phi}_{h}-\tilde{\bm{\phi}})+(\bm{B}_{h}-\tilde{\bm{B}}, \Pi_{\mathrm{div}}^{h}\tilde{\bm{B}}-\tilde{\bm{B}}).
\end{align*}
By \eqref{phi-phi} and the interpolation error estimates
$$
\|\tilde{\bm{B}}-\Pi_{\mathrm{div}}^{h}\tilde{\bm{B}}\|\lesssim h^{\frac{1}{2}+\delta}\|\tilde{\bm{B}}\|_{\frac{1}{2}+\delta}\lesssim h^{\frac{1}{2}+\delta}\|\nabla\times \tilde{\bm{B}}\|\lesssim h^{\frac{1}{2}+\delta} \|\nabla_{h}\times \bm{B}_{h}\|,
$$
 we obtain
\begin{align*}
\left | (\nabla_{h}\times \bm{B}_{h}, \bm{\phi}_{h}-\tilde{\bm{\phi}}) \right |&\lesssim h^{\frac{1}{2}+\delta}\|\bm{B}_{h}-\Pi_{\mathrm{div}}^{h}\tilde{\bm{B}}\|\|\nabla_{h}\times \bm{B}_{h}\|\\&
\leq h^{\frac{1}{2}+\delta}\left (\|\bm{B}_{h}-\tilde{\bm{B}}\|  + \|\tilde{\bm{B}}-\Pi_{\mathrm{div}}^{h}\tilde{\bm{B}}\|   \right)\|\nabla_{h}\times \bm{B}_{h}\|\\&
\leq h^{\frac{1}{2}+\delta}\|\bm{B}_{h}-\tilde{\bm{B}}\|\|\nabla_{h}\times \bm{B}_{h}\|  +  h^{1+2\delta}\|\nabla_{h}\times \bm{B}_{h}\|^{2}\\&
\leq \frac{1}{2} \|\bm{B}_{h}-\tilde{\bm{B}}\|^{2}+\frac{1}{2}h^{1+2\delta}\|\nabla_{h}\times \bm{B}_{h}\|^{2}  +  h^{1+2\delta}\|\nabla_{h}\times \bm{B}_{h}\|^{2},
\end{align*}
and hence
$$
\|\bm{B}_{h}-\tilde{\bm{B}}\|^{2}\lesssim
 \|\tilde{\bm{B}}-\Pi_{\mathrm{div}}^{h}\tilde{\bm{B}}\|^{2}
+ h^{1+2\delta}\|\nabla_{h}\times \bm{B}_{h}\|^2.
$$
 
This completes the proof.
\end{proof}

For nonlinear problems and their linearizations, it is technical to prove the boundedness of variational forms, and this often requires careful estimates of regularity. The nonlinear terms in the variational forms proposed in this paper will have the form $(\bm{u}_{h}\times \bm{B}_{h}, \bm{j}_{h})$, where $\bm{u}_{h}\in \bm{V}_{h}\subset H_{0}^{1}(\Omega)^{3}$, $\bm{B}_{h}\in H^{h}_{0}(\mathrm{div}0, \Omega)$ and $\bm{j}_{h}\in H^{h}_{0}(\mathrm{curl}, \Omega)$.
\begin{lemma}\label{lem:nonlinear-boundedness}
For $\bm{u}_{h}\in \bm{V}_{h}$ and $\bm{B}_{h}\in H_{0}^{h}(\mathrm{div}0, \Omega)$, we have the following bound:
$$
\|\bm{u}_{h}\times \bm{B}_{h}\|\lesssim\|\bm{u}_{h}\|_{1}\|\nabla_{h}\times\bm{B}_{h}\|.
$$
\end{lemma}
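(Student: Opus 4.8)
The plan is to compare $\bm{B}_h$ with its continuous lifting $\tilde{\bm{B}}:=H_d\bm{B}_h\in\bm{Z}$ and to exploit the extra power of $h$ furnished by Lemma~\ref{Hd-approximation}. Writing
$$
\bm{u}_h\times\bm{B}_h=\bm{u}_h\times(\bm{B}_h-\tilde{\bm{B}})+\bm{u}_h\times\tilde{\bm{B}},
$$
I would estimate the two terms separately: the second (smooth) term is controlled using the regularity of $\bm{Z}$, while the first (rough) term is controlled by trading an inverse estimate on $\bm{u}_h$ against the approximation power of $H_d$.

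For the smooth term, since $\tilde{\bm{B}}\in\bm{Z}=H(\mathrm{curl},\Omega)\cap H_0(\mathrm{div}0,\Omega)$ enjoys the fractional regularity $\|\tilde{\bm{B}}\|_{1/2+\delta}\lesssim\|\nabla\times\tilde{\bm{B}}\|$ already used in the proof of Lemma~\ref{Hd-approximation}, the critical Sobolev embedding $H^{1/2+\delta}(\Omega)\hookrightarrow L^3(\Omega)$ in three dimensions gives $\|\tilde{\bm{B}}\|_{0,3}\lesssim\|\nabla\times\tilde{\bm{B}}\|$. Combining H\"{o}lder's inequality ($\tfrac16+\tfrac13=\tfrac12$), the embedding $\|\bm{u}_h\|_{0,6}\leq C_1\|\nabla\bm{u}_h\|$, and the bound $\|\nabla\times\tilde{\bm{B}}\|\leq\|\nabla_h\times\bm{B}_h\|$ established right after \eqref{def-Z-2}, I obtain
$$
\|\bm{u}_h\times\tilde{\bm{B}}\|\leq\|\bm{u}_h\|_{0,6}\,\|\tilde{\bm{B}}\|_{0,3}\lesssim\|\nabla\bm{u}_h\|\,\|\nabla_h\times\bm{B}_h\|.
$$

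For the rough term, only the $L^2$ norm of $\bm{B}_h-\tilde{\bm{B}}$ is available, so I would pair it with the $L^\infty$ norm of $\bm{u}_h$: $\|\bm{u}_h\times(\bm{B}_h-\tilde{\bm{B}})\|\leq\|\bm{u}_h\|_{0,\infty}\|\bm{B}_h-\tilde{\bm{B}}\|$. Because $\bm{u}_h\in\bm{V}_h$ is a piecewise polynomial on a quasi-uniform mesh, the inverse estimate $\|\bm{u}_h\|_{0,\infty}\lesssim h^{-1/2}\|\bm{u}_h\|_{0,6}\lesssim h^{-1/2}\|\nabla\bm{u}_h\|$ holds, while Lemma~\ref{Hd-approximation} gives $\|\bm{B}_h-\tilde{\bm{B}}\|\lesssim h^{1/2+\delta}\|\nabla_h\times\bm{B}_h\|$. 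Multiplying, the negative power $h^{-1/2}$ is absorbed and one is left with $h^{\delta}\leq C$, so that $\|\bm{u}_h\times(\bm{B}_h-\tilde{\bm{B}})\|\lesssim h^{\delta}\|\nabla\bm{u}_h\|\,\|\nabla_h\times\bm{B}_h\|\lesssim\|\bm{u}_h\|_1\|\nabla_h\times\bm{B}_h\|$. Adding the two estimates finishes the proof. The main obstacle is precisely this rough term: $\bm{u}_h\times\bm{B}_h$ need not lie in any space with better-than-$L^2$ control, and the only reason the inverse-estimate blow-up is harmless is the gain of $h^{1/2+\delta}$ in Lemma~\ref{Hd-approximation}; getting the two powers of $h$ to balance is the crux, and it is what dictated the choice to measure $\bm{B}_h$ in the discrete curl norm.
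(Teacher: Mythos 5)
Your proposal is correct and follows essentially the same route as the paper's own proof: the same splitting $\bm{u}_h\times\bm{B}_h=\bm{u}_h\times(\bm{B}_h-H_d\bm{B}_h)+\bm{u}_h\times H_d\bm{B}_h$, the same inverse estimate $\|\bm{u}_h\|_{0,\infty}\lesssim h^{-1/2}\|\bm{u}_h\|_{0,6}$ traded against the $h^{1/2+\delta}$ gain from Lemma~\ref{Hd-approximation} for the rough part, and the same $L^6$--$L^3$ H\"{o}lder pairing with the regularity of $\bm{Z}$ for the smooth part. Your only addition is to spell out the embedding $H^{1/2+\delta}(\Omega)\hookrightarrow L^3(\Omega)$ behind the bound $\|\tilde{\bm{B}}\|_{0,3}\lesssim\|\nabla\times\tilde{\bm{B}}\|$, which the paper leaves implicit in its citation of the regularity of $\bm{Z}$.
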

\begin{proof}

From Lemma \ref{Hd-approximation}, we have
$$
\|\bm{B}_{h}-\tilde{\bm{B}}\|\lesssim h^{\frac{1}{2}+\delta}\|\nabla_{h}\times \bm{B}_{h}\|,
$$
where $0<\delta\leq \frac{1}{2}$ is a positive constant depending on the domain.

Then
\begin{align*}
\|\bm{u}_{h}\times \bm{B}_{h}\|\leq  \|\bm{u}_{h}\times (\bm{B}_{h}-\tilde{\bm{B}})\|+\|\bm{u}_{h}\times \tilde{\bm{B}}\|.
\end{align*}
For the first term,
\begin{align*}
\|\bm{u}_{h}\times (\bm{B}_{h}-\tilde{\bm{B}})\|&\leq \|\bm{u}_{h}\|_{0,\infty}\|\bm{B}_{h}-\tilde{\bm{B}}\|\\&
\lesssim h^{-\frac{1}{2}}\|\bm{u}_{h}\|_{0,6}\cdot h^{\frac{1}{2}}\|\nabla_{h}\times \bm{B}_{h}\|\\&
\lesssim \|\bm{u}_{h}\|_{1}\|\nabla_{h}\times \bm{B}_{h}\|,
\end{align*}
where the second inequality comes from the inverse estimates and the approximation results.

Due to the regularity of $\bm{Z}$ \cite{Hiptmair.R.2002a}, we have 
\begin{align*}
\|\bm{u}_{h}\times \tilde{\bm{B}}\|&\leq \|\bm{u}_{h}\|_{0, 6}\|\tilde{\bm{B}}\|_{0,3}\\&
\lesssim  \|\bm{u}_{h}\|_{1}\|\nabla\times \tilde{\bm{B}}\|\\&
\leq  \|\bm{u}_{h}\|_{1}\|\nabla_{h}\times \bm{B}_{h}\|.
\end{align*}
This implies
$$
\|\bm{u}_{h}\times \bm{B}_{h}\|\lesssim \|\bm{u}_{h}\|_{1}\|\nabla_{h}\times \bm{B}_{h}\|.
$$
\end{proof}

Below we will use a positive constant $C_{2}$ to denote the bound:
\begin{align}\label{def:C2}
\|\bm{u}_{h}\times \bm{B}_{h}\|\leq C_{2}\|\nabla\bm{u}_{h}\|\|\nabla_{h}\times\bm{B}_{h}\|,
\end{align}
and therefore
$$
(\bm{u}_{h}\times \bm{B}_{h}, \bm{j}_{h})\leq C_{2}\|\nabla\bm{u}_{h}\|\|\nabla_{h}\times\bm{B}_{h}\|\|\bm{j}_{h}\|_{0}.
$$

In the discussions below, we will need discrete Poincar\'{e}'s inequality for $H^{h}_{0}(\mathrm{div}0, \Omega)$ functions. We note that
 the two dimensional case is given in \cite{Chen.L;Wang.M;Zhong.L.2014a}, and the proof can be modified to adapt to the three dimensional case. We include a different proof here.
 \begin{lemma}\label{lem:poincare}
For $\bm{B}_{h}\in H^{h}_{0}(\mathrm{div}0, \Omega)$, we have the following discrete Poincar\'{e}'s inequality:
$$
\|\bm{B}_{h}\|\lesssim \|\nabla_{h}\times\bm{B}_{h}\|.
$$
\end{lemma}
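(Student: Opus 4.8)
The plan is to compare $\bm{B}_{h}$ with its continuous Hodge lifting $\tilde{\bm{B}} := H_{d}\bm{B}_{h} \in \bm{Z}$ and to estimate separately the lifting $\tilde{\bm{B}}$ and the discrepancy $\bm{B}_{h} - \tilde{\bm{B}}$, bounding each by the right-hand side $\|\nabla_{h}\times\bm{B}_{h}\|$. The triangle inequality
$$
\|\bm{B}_{h}\| \le \|\bm{B}_{h} - \tilde{\bm{B}}\| + \|\tilde{\bm{B}}\|
$$
reduces the claim to controlling these two terms.

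For the lifting term, I would invoke the continuous Poincar\'{e} inequality on $\bm{Z}$ recorded just after \eqref{def:Hd}, namely $\|\bm{z}\| \lesssim \|\nabla\times\bm{z}\|$ for $\bm{z}\in\bm{Z}$, together with the contraction estimate $\|\nabla\times(H_{d}\bm{B}_{h})\| \le \|\nabla_{h}\times\bm{B}_{h}\|$ obtained from \eqref{def-Z-2} by testing against $\bm{w}=\nabla\times(H_{d}\bm{B}_{h})$. Combining these gives
$$
\|\tilde{\bm{B}}\| \lesssim \|\nabla\times\tilde{\bm{B}}\| = \|\nabla\times(H_{d}\bm{B}_{h})\| \le \|\nabla_{h}\times\bm{B}_{h}\|.
$$

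For the discrepancy term I would apply Lemma \ref{Hd-approximation}, which yields $\|\bm{B}_{h} - \tilde{\bm{B}}\| \lesssim h^{\frac{1}{2}+\delta}\|\nabla_{h}\times\bm{B}_{h}\|$. Since the mesh size $h$ is bounded above (by, say, $\mathrm{diam}(\Omega)$), the factor $h^{\frac{1}{2}+\delta}$ is bounded by a constant independent of $h$, so that $\|\bm{B}_{h} - \tilde{\bm{B}}\| \lesssim \|\nabla_{h}\times\bm{B}_{h}\|$. Adding the two bounds completes the argument.

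I expect no genuine obstacle to remain at this stage: all the analytic work has already been absorbed into Lemma \ref{Hd-approximation} (the regularity-driven $h^{\frac{1}{2}+\delta}$ approximation of the Hodge map) and into the continuous Poincar\'{e} inequality on $\bm{Z}$. The only point requiring care is that here one needs merely \emph{boundedness} of $h^{\frac{1}{2}+\delta}$, not decay; consequently no mesh-smallness hypothesis is needed and the resulting constant is uniform in $h$, which is precisely what the stated estimate (with $\lesssim$ hiding an $h$-independent constant) requires.
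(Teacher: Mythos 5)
Your proposal is correct, and it does not suffer from circularity: Lemma \ref{Hd-approximation} is proved in the paper before the discrete Poincar\'{e} inequality and without using it, so you are free to invoke it here. However, your route is genuinely different from the paper's. You argue by the decomposition $\|\bm{B}_{h}\|\le\|\bm{B}_{h}-\tilde{\bm{B}}\|+\|\tilde{\bm{B}}\|$, controlling the lifting $\tilde{\bm{B}}=H_{d}\bm{B}_{h}$ by the continuous Poincar\'{e} inequality on $\bm{Z}$ together with the contraction $\|\nabla\times(H_{d}\bm{B}_{h})\|\le\|\nabla_{h}\times\bm{B}_{h}\|$, and controlling the discrepancy by Lemma \ref{Hd-approximation}, where indeed only boundedness (not smallness) of $h^{\frac{1}{2}+\delta}$ is needed. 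The paper instead stays entirely on the discrete level: since $\nabla\cdot\bm{B}_{h}=0$, discrete exactness gives $\bm{E}_{h}\in H_{0}^{h}(\mathrm{curl})$ with $\nabla\times\bm{E}_{h}=\bm{B}_{h}$ and $\nabla_{h}\cdot\bm{E}_{h}=0$; the discrete Poincar\'{e} inequality for $\bm{X}_{h}^{c}$ (cited from Arnold--Falk--Winther) gives $\|\bm{E}_{h}\|\lesssim\|\bm{B}_{h}\|$, and then the duality characterization $\|\nabla_{h}\times\bm{B}_{h}\|=\sup_{\bm{F}_{h}}(\bm{B}_{h},\nabla\times\bm{F}_{h})/\|\bm{F}_{h}\|$, tested with $\bm{F}_{h}=\bm{E}_{h}$, yields $\|\nabla_{h}\times\bm{B}_{h}\|\ge\|\bm{B}_{h}\|^{2}/\|\bm{E}_{h}\|\gtrsim\|\bm{B}_{h}\|$. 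The trade-off: your argument recycles the heaviest analytic tool of the section (the Hodge-map approximation, which requires the domain regularity exponent $\delta$ and bounded cochain projections), making the ``discrete field is close to its continuous lifting'' structure explicit; the paper's argument is shorter and avoids Lemma \ref{Hd-approximation} and any regularity exponent altogether, at the cost of leaning on the exactness of the discrete de Rham sequence and an external discrete Poincar\'{e} inequality for edge elements. Both yield constants uniform in $h$ with no mesh-smallness hypothesis.
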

\begin{proof}
Because $\nabla\cdot\bm{B}_{h}=0$, we can choose $\bm{E}_{h}\in H^{h}_{0}(\mathrm{curl})$ such that
$$
\nabla\times \bm{E}_{h}=\bm{B}_{h}, \quad \mbox{ and } \quad \nabla_{h}\cdot\bm{E}_{h}=0.
$$
From the discrete Poincar\'{e} inequality for $\bm{X}_{h}^{c}$ in \cite{Arnold.D;Falk.R;Winther.R.2010a}, 
\begin{align}\label{poincare0}
\|\bm{E}_{h}\|_{\mathrm{curl}}\lesssim \|\nabla\times \bm{E}_{h}\|=\|\bm{B}_{h}\|.
\end{align}

We have
\begin{align}\nonumber
\|\nabla_{h}\times \bm{B}_{h}\|&=\sup_{\bm{F}_{h}\in H^{h}_{0}(\mathrm{curl})}\frac{(\nabla_{h}\times\bm{B}_{h}, \bm{F}_{h})}{\|\bm{F}_{h}\|}\\&\label{poincare-l2dual}
=\sup_{\bm{F}_{h}\in H^{h}_{0}(\mathrm{curl})}\frac{(\bm{B}_{h}, \nabla\times\bm{F}_{h})}{\|\bm{F}_{h}\|}.
\end{align}
Therefore combining \eqref{poincare-l2dual} and \eqref{poincare0}, we get
$$
\|\nabla_{h}\times \bm{B}_{h}\|\geq \frac{(\bm{B}_{h}, \nabla\times \bm{E}_{h})}{\|\bm{E}_{h}\|},
$$
and
$$
\|\nabla_{h}\times \bm{B}_{h}\|\gtrsim \|\bm{B}_{h}\|.
$$
\end{proof}

Combined with $L^{p}$-$L^{p}$ bounded interpolations (c.f. \cite{christiansen2011topics}), we can further establish $L^{p}$ estimates of $H(\mathrm{div}0)$ finite element functions.
\begin{theorem}\label{thm:L3}
For bounded Lipschitz polyhedral domain $\Omega$,  we have
$$
\|\bm{B}_{h}\|_{0,3}\lesssim \|\nabla_{h}\times \bm{B}_h\|,\quad \bm{B}_{h}\in  H_{0}^{h}(\mathrm{div}0, \Omega).
$$
\end{theorem}
\begin{proof}
From triangular inequality,  we have
$$
\|\bm{B}_{h}\|_{0,3}\leq \|\bm{B}_{h}-\Pi_{\mathrm{div}}^{h}{H_{d}}{\bm{B}}\|_{0,3}+\|\Pi_{\mathrm{div}}^{h}{{H}_{d}}{\bm{B}}\|_{0,3}.
$$
From inverse estimates, interpolation error estimates and the approximation of Hodge mapping (Lemma \ref{Hd-approximation}), 
\begin{align*}
\|\bm{B}_{h}-\Pi_{\mathrm{div}}^{h}{{H}_{d}}{\bm{B}_{h}}\|_{0,3}&\lesssim h^{-1/2}\left \|\bm{B}_{h}-\Pi_{\mathrm{div}}^{h}{{H}_{d}}{\bm{B}_{h}}\right \|\\&
\lesssim h^{-1/2}( \left \|\bm{B}_{h}-H_{d}{\bm{B}_{h}}\right \|+\left \|H_{d}{\bm{B}_{h}}-\Pi_{\mathrm{div}}^{h}{{H}_{d}}{\bm{B}_{h}}\right \|   )\\&
\lesssim \|\nabla_{h}\times \bm{B}_{h}\|.
\end{align*}
Using the $L^{3}$ stability of the interpolation operator and regularity results of $\bm{Z}$, we have
$$
\left \|\Pi_{\mathrm{div}}^{h}{{H}_{d}}{\bm{B}_{h}}\right \|_{0,3}\lesssim\left \|{{H}_{d}}{\bm{B}_{h}}\right \|_{0, 3}\lesssim \left \|\nabla\times {{H}_{d}}{\bm{B}_{h}}\right \| \leq \|\nabla_{h}\times \bm{B}_{h}\|.
$$
Then the triangular inequality implies 
$$
\|\bm{B}_{h}\|_{0,3}\leq \|\bm{B}_{h}-\Pi_{\mathrm{div}}^{h}{{H}_{d}}{\bm{B}_{h}}\|_{0,3}+\left \|\Pi_{\mathrm{div}}^{h}{{H}_{d}}{\bm{B}_{h}}\right \|_{0,3}  \lesssim \|\nabla_{h}\times \bm{B}_h\|.
$$
\end{proof}

In following discussions, we still use a generic constant $C_{2}$ to denote the bound
$$
\|\bm{B}_{h}\|\leq C_{2}\|\nabla_{h}\times \bm{B}_{h}\|, \quad\forall \bm{B}_{h}\in H^{h}_{0}(\mathrm{div}, \Omega).
$$

\section{A new finite element formulation}\label{sec:Bj}
In Hu, Ma and Xu \cite{hu2014stable}, the authors studied a numerical scheme using $\bm{B}$ and $\bm{E}$ as variables.
A straightforward analysis by Brezzi theory leads to a stringent condition on the time step size.
In this section, we propose a new finite element
scheme whose well-posedness will not depend on such assumptions. 

We note that it is the variable $\bm{j}$ that appears in the energy
estimate. Therefore it seems natural to use $\bm{B}$ and $\bm{j}$ as
mixed variables of the electromagnetic part of the MHD system. Discretization methods based on $\bm{B}$ and $\bm{j}$ actually have
already existed in the literature.  For example, some finite volume
methods using $\bm{B}$ and $\bm{j}$  have been developed in
\cite{ Ni.M;Munipalli.R;Morley.N;Huang.P;Abdou.M.2007a, Ni.M;Li.J.2012a} where the conservation of
$\nabla\cdot \bm j=0$ was considered (but no discussion on the
condition $\nabla \cdot \bm B=0$), and in \cite{Yang.Z;Zhou.T;Chen.H;Ni.M.2010a}, 
$\bm{B}$ and $\bm{j}$ were used as variables in the simulation of liquid metal
breeder blankets.

We eliminate $\bm{E}$ by Ohm's law and consider the following model: 
\begin{dgroup}[compact]\label{jmhd}
\begin{dmath}
-R_{e}^{-1}\Delta \bm{u}+\nabla p +(\bm{u}\cdot \nabla)\bm{u}+S\bm{B}\times \bm{j}=\bm{f}, \label{jmhd1}
\end{dmath}
\begin{dmath}
\nabla\times \bm{j}-\nabla\times (\bm{u}\times \bm{B})=\bm{0}, \label{jmhd2}
\end{dmath}
\begin{dmath}
\bm{j}-R_{m}^{-1}\nabla\times \bm{B}=\bm{0}, \label{jmhd3}
\end{dmath}
\begin{dmath}
\nabla\cdot \bm{u}=0, \label{jmhd4}
\end{dmath}
\begin{dmath}
\nabla\cdot \bm{B}=0. \label{jmhd5}
\end{dmath}
\end{dgroup}

The well-posedness of the continuous formulation has been shown in \cite{Schotzau.D.2004a}.  The author proved that there exists at least one solution $\bm{u}\in H_{0}^{1}(\Omega)^{3}$, $\bm{B}\in H(\mathrm{curl}, \Omega)\cap H_{0}(\mathrm{div}0, \Omega)$ for the nonlinear system where $\bm{j}$ is eliminated. The variational form reads:
find $\left (\bm{u}, \bm{B}, p, \phi \right )\in {H}_{0}^{1}(\Omega)^{3}\times  H(\mathrm{curl}, \Omega)\cap H_{0}(\mathrm{div}, \Omega)\times L^{2}_{0}(\Omega)\times H^{1}_{0}(\Omega)$ such that for any $\left (\bm{v}, \bm{C}, q, \psi \right )\in {H}_{0}^{1}(\Omega)^{3}\times  H(\mathrm{curl}, \Omega)\cap H_{0}(\mathrm{div}, \Omega)\times L^{2}_{0}(\Omega)\times H^{1}_{0}(\Omega)$,
\begin{align}\label{curl-formulation}
\begin{cases}
&L(\bm{u}; \bm{u}, \bm{v})+R_{e}^{-1}(\nabla\bm{u}, \nabla\bm{v})-s\left ((\nabla\times \bm{B})\times \bm{B}, \bm{v}  \right )-(p, \nabla\cdot\bm{v})=\langle \bm{f}, \bm{v}\rangle\\& -(\bm{u}\times \bm{B}, \nabla\times \bm{C})+R_{m}^{-1}(\nabla\times \bm{B}, \nabla\times \bm{C})+(\nabla \phi, \bm{C})=0, \\&
(\nabla\cdot\bm{u}, q)=0,\\
&(\bm{B}, \nabla \psi)=0.
\end{cases}
\end{align}
Considering $\bm{j}=R_{m}^{-1}\nabla\times \bm{B}$  as an intermediate variable, we conclude with the existence of  solutions to \eqref{jmhd}: for any $\bm{f}\in \left (H^{1}_{0}(\Omega)^{3}\right )^{\ast}$, there exists at least one solution  $\bm{u}\in H_{0}^{1}(\Omega)^{3}$, $\bm{B}\in H(\mathrm{curl}, \Omega)\cap H_{0}(\mathrm{div}0, \Omega)$ and $\bm{j}\in L^{2}(\Omega)^{3}$.

\subsection{Mixed finite element discretizations}

We now present our new finite element discretization of the
above system \eqref{jmhd}. 
\begin{problem}\label{prob:nonlinear}
Given $\bm{f}\in \bm{V}_{h}^{\ast}$. Find $(\bm{u}_{h},\bm{j}_{h}, \bm{\sigma}_{h}, \bm{B}_{h}, p_{h}, r_{h})\in \bm{X}_{h}\times \bm{Y}_{h} $, such that for any $(\bm{v}_{h},  \bm{k}_{h}, \bm{\tau}_{h}, \bm{C}_{h},  q_{h},s_{h})\in \bm{X}_{h}\times \bm{Y}_{h} $,
\begin{dgroup}[compact]\label{equi-fem}
\begin{dmath}
R_{e}^{-1}(\nabla \bm{u}_{h}, \nabla \bm{v}_{h})+L(\bm{u}_{h}; \bm{u}_{h}, \bm{v}_{h})  +S(  \bm{j}_{h}, \bm{v}_{h}\times \bm{B}_{h})-(p_{h}, \nabla\cdot \bm{v}_{h})
 =\langle \bm{f}, \bm{v}_{h} \rangle, \label{equi-fem1}
\end{dmath}
\begin{dmath}
SR_{m}^{-1}(\nabla\times \bm{j}_{h},  \bm{C}_{h})-SR_{m}^{-1}(\nabla\times \bm{\sigma}_{h},   \bm{C}_{h})+({r}_{h}, \nabla\cdot \bm{C}_{h})=0, \label{equi-fem2}
\end{dmath}
\begin{dmath}
SR_{m}^{-1}(\bm{\sigma}_{h}, \bm{\tau}_{h})-SR_{m}^{-1}(\bm{u}_{h}\times \bm{B}_{h}, \bm{\tau}_{h})=0, \label{equi-fem3}
\end{dmath}
\begin{dmath}
{S}(\bm{j}_{h}, \bm{k}_{h})-SR_{m}^{-1}(\bm{B}_{h}, \nabla\times \bm{k}_{h})=0, \label{equi-fem4}
\end{dmath}
\begin{dmath}
-(\nabla\cdot \bm{u}_{h}, q_{h})=0, \label{equi-fem5}
\end{dmath}
\begin{dmath}
(\nabla\cdot \bm{B}_{h}, s_{h})=0. \label{equi-fem6}
\end{dmath}
\end{dgroup}
\end{problem}

In the above scheme, an additional variable $\bm{\sigma}_h$ is
introduced to accommodate for the evaluation of the discrete curl operator
$\nabla_h\times$ which is nonlocal.  This extra work comes
from the nonlinear coupling term $(\nabla\times (\bm{u}\times \bm{B}),
\bm{C})$, because curl operator cannot act on $\bm{u}\times \bm{B}$ directly.

Before further discussions, we verify basic properties of the discretization and the energy estimates, which are basic and important tools in the design and  analysis of numerical methods, especially for nonlinear problems.
\begin{theorem}\label{them:nonlinear-div-free}
Any solution of Problem \ref{prob:nonlinear} satisfies 
\begin{enumerate}
\item
Gauss's law of magnetic field in the strong sense:
$$
\nabla\cdot \bm{B}_{h}=0.
$$
\item
the Lagrange multiplier $r_{h}=0$, hence \eqref{equi-fem2} reduces to 
$$
\nabla\times (\bm{j}_{h}- \bm{\sigma}_{h})=\bm{0}.
$$
\item
the energy estimates:
$$
R_{e}^{-1}\|\nabla\bm{u}_{h}\|^{2}+S\|\bm{j}_{h}\|^{2}=\langle \bm{f}, \bm{u}\rangle,
$$
and
$$
\frac{1}{2R_{e}}\|\nabla\bm{u}_{h}\|^{2}+{S}\|\bm{j}_{h}\|^{2}\leq \frac{R_{e}}{2}\|\bm{f}\|_{-1}^{2}.
$$
\end{enumerate}
\end{theorem}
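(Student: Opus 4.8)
The plan is to establish the three assertions in turn, in each case by a careful choice of test functions in Problem \ref{prob:nonlinear} together with the exactness of the discrete de Rham sequence ($\mathrm{div}\circ\mathrm{curl}=0$ and the surjectivity $\nabla\cdot H_0^h(\mathrm{div})=L^2_{0,h}$). For the Gauss's law I would test \eqref{equi-fem6} with $s_h=\nabla\cdot\bm{B}_h$. Since $\bm{B}_h\in H_0^h(\mathrm{div},\Omega)$, the boundary condition gives $\int_\Omega\nabla\cdot\bm{B}_h=\int_{\partial\Omega}\bm{B}_h\cdot\bm{n}=0$, so $\nabla\cdot\bm{B}_h$ lies in $L^2_{0,h}(\Omega)$ and is an admissible test function; this yields $\|\nabla\cdot\bm{B}_h\|^2=0$, hence $\nabla\cdot\bm{B}_h=0$ pointwise.

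For the second assertion I would rewrite \eqref{equi-fem2} as $\frac{S}{R_m}(\nabla\times(\bm{j}_h-\bm{\sigma}_h),\bm{C}_h)+(r_h,\nabla\cdot\bm{C}_h)=0$. Because $\bm{j}_h,\bm{\sigma}_h\in H_0^h(\mathrm{curl},\Omega)$, the curl maps them into $H_0^h(\mathrm{div},\Omega)$, so $\nabla\times(\bm{j}_h-\bm{\sigma}_h)$ is an admissible choice for $\bm{C}_h$; since $\nabla\cdot\nabla\times=0$ exactly in the sequence, the multiplier term drops out and I get $\|\nabla\times(\bm{j}_h-\bm{\sigma}_h)\|^2=0$, i.e. $\nabla\times(\bm{j}_h-\bm{\sigma}_h)=\bm{0}$. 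With the curl term gone, \eqref{equi-fem2} reduces to $(r_h,\nabla\cdot\bm{C}_h)=0$ for all $\bm{C}_h$, and the surjectivity $\nabla\cdot H_0^h(\mathrm{div})=L^2_{0,h}$ lets me pick $\bm{C}_h$ with $\nabla\cdot\bm{C}_h=r_h$, forcing $r_h=0$.

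For the energy law I would test with $\bm{v}_h=\bm{u}_h$ and $q_h=p_h$, so that $L(\bm{u}_h;\bm{u}_h,\bm{u}_h)=0$ by the antisymmetry of $L$ and $(p_h,\nabla\cdot\bm{u}_h)=0$ by \eqref{equi-fem5}, leaving $\frac{1}{R_e}\|\nabla\bm{u}_h\|^2+S(\bm{j}_h,\bm{u}_h\times\bm{B}_h)=\langle\bm{f},\bm{u}_h\rangle$. The crux, and the step I expect to be the main obstacle, is to show the coupling term equals $S\|\bm{j}_h\|^2$. Here I would use that \eqref{equi-fem3} identifies $\bm{\sigma}_h$ as the $L^2$-projection of $\bm{u}_h\times\bm{B}_h$ onto $H_0^h(\mathrm{curl})$; testing that identity with $\bm{\tau}_h=\bm{j}_h$ gives $(\bm{j}_h,\bm{u}_h\times\bm{B}_h)=(\bm{j}_h,\bm{\sigma}_h)$. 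Then I would test \eqref{equi-fem4} with $\bm{k}_h=\bm{j}_h-\bm{\sigma}_h$; since $\nabla\times(\bm{j}_h-\bm{\sigma}_h)=\bm{0}$ by the second assertion, the $\bm{B}_h$-term vanishes and I obtain $(\bm{j}_h,\bm{j}_h-\bm{\sigma}_h)=0$, i.e. $(\bm{j}_h,\bm{\sigma}_h)=\|\bm{j}_h\|^2$. Combining these gives the first energy identity $\frac{1}{R_e}\|\nabla\bm{u}_h\|^2+S\|\bm{j}_h\|^2=\langle\bm{f},\bm{u}_h\rangle$.

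The second estimate then follows by bounding $\langle\bm{f},\bm{u}_h\rangle\le\|\bm{f}\|_{-1}\|\nabla\bm{u}_h\|\le\frac{1}{2R_e}\|\nabla\bm{u}_h\|^2+\frac{R_e}{2}\|\bm{f}\|_{-1}^2$ and absorbing the first term on the left. The genuinely delicate part is the coupling-term identity: it is the discrete counterpart of the continuous cancellation $(\bm{j},\bm{u}\times\bm{B})=\|\bm{j}\|^2-(\bm{j},\bm{E})$ with $(\bm{j},\bm{E})=R_m^{-1}(\bm{B},\nabla\times\bm{E})=0$, and reproducing it exactly (rather than only approximately) on the discrete level is precisely what forces the introduction of $\bm{\sigma}_h$ and the weak-curl structure. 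The care required is to check that each test function is admissible in the relevant finite element space and that every orthogonality used is an exact consequence of the sequence.
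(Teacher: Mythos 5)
Your proposal is correct: all three assertions are established with valid choices of test functions, and every admissibility claim you make (that $\nabla\cdot\bm{B}_h\in L^{2}_{0,h}$, that $\nabla\times(\bm{j}_h-\bm{\sigma}_h)\in H_0^h(\mathrm{div})$, that $\bm{j}_h-\bm{\sigma}_h$ is a legal test function in \eqref{equi-fem4}) does follow from the assumed exactness of the discrete de Rham sequence. Your treatment of parts (1) and (2) coincides with the paper's proof. Part (3) is where you genuinely diverge: the paper tests \eqref{equi-fem1}--\eqref{equi-fem4} with $\bm{v}_h=\bm{u}_h$, $\bm{C}_h=\bm{B}_h$, $\bm{\tau}_h=\nabla_h\times\bm{B}_h$, $\bm{k}_h=\bm{j}_h$ and \emph{adds} the four equations, so that the cross terms $(\nabla\times\bm{j}_h,\bm{B}_h)$ and $(\nabla\times\bm{\sigma}_h,\bm{B}_h)$ cancel pairwise, and then kills the two remaining nonlinear terms by one more use of \eqref{equi-fem4} with $\bm{k}_h=\mathbb{P}(\bm{u}_h\times\bm{B}_h)$; you instead test only the momentum equation and prove the standalone identity $(\bm{j}_h,\bm{u}_h\times\bm{B}_h)=(\bm{j}_h,\bm{\sigma}_h)=\|\bm{j}_h\|^2$ from \eqref{equi-fem3} with $\bm{\tau}_h=\bm{j}_h$ and \eqref{equi-fem4} with $\bm{k}_h=\bm{j}_h-\bm{\sigma}_h$, invoking part (2) to drop the curl term. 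What your route buys is that it isolates, as an explicit discrete fact, the orthogonality $(\bm{j}_h,\bm{j}_h-\bm{\sigma}_h)=0$ --- the exact discrete counterpart of the continuous cancellation $(\bm{j},\bm{E})=R_m^{-1}(\bm{B},\nabla\times\bm{E})=0$ --- which is a nice structural statement in its own right; the cost is that part (3) now logically depends on part (2), whereas the paper's summation argument needs only part (1) (to discard $(r_h,\nabla\cdot\bm{B}_h)$) and mirrors the generic test-and-add computation that the paper reuses verbatim for the energy estimates of the Picard iteration (Algorithm \ref{alg:picardj}).
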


\begin{proof}
\begin{enumerate}
\item
It is a direct consequence of \eqref{equi-fem6}, since $\nabla\cdot H^{h}_{0}(\mathrm{div};\Omega) = L^{2}_{0,h}(\Omega)$.
\item
Take $\bm{C}_{h}=\nabla\times \bm{j}_{h}-\nabla\times \bm{\sigma}_{h}$. From \eqref{equi-fem2} we see
$$
\nabla\times \bm{j}_{h}-\nabla\times \bm{\sigma}_{h}=\bm{0}.
$$
Hence
$$
(r_{h}, \nabla\cdot \bm{C}_{h})=0, \quad \forall \bm{C}_{h}\in H^{h}_{0}(\mathrm{div},\Omega).
$$
This implies
$$
r_{h}=0.
$$
\item
Take $\bm{v}_{h}=\bm{u}_{h}, \bm{C}_{h}=\bm{B}_{h}, \bm{\tau}_{h}=\nabla_{h}\times \bm{B}_{h}, \bm{k}_{h}=\bm{j}_{h}$ in  \eqref{equi-fem1}-\eqref{equi-fem4}. Add them together, we have
$$
R_{e}^{-1}\|\nabla\bm{u}_{h}\|^{2}+S\|\bm{j}_{h}\|^{2}+S(\bm{j}_{h}, \bm{u}_{h}\times \bm{B}_{h})-SR_{m}^{-1}(\bm{u}_{h}\times \bm{B}_{h}, \nabla_{h}\times \bm{B}_{h})=\langle \bm{f}, \bm{u}_{h}\rangle.
$$ 
Again from \eqref{equi-fem4}, the last two terms on the left hand side vanish by taking $\bm{k}_{h}=\mathbb{P}(\bm{u}_{h}\times \bm{B}_{h})$.

This implies the desired result.
\end{enumerate}
\end{proof}

From \eqref{equi-fem3}, we see $\bm{\sigma}_{h}=\mathbb{P}(\bm{u}_{h}\times \bm{B}_{h})$, and from \eqref{equi-fem4}, we get $\bm{j}_{h}={R_m^{-1}}\nabla_{h}\times 
\bm{B}_{h}$.  To prove the existence of solution of the nonlinear scheme, we formally eliminate $\bm{\sigma}_{h}$ and $\bm{j}_{h}$ using the above identities, to get a system with $\bm{u}_{h}$, $\bm{B}_{h}$, $p_{h}$ and $s_{h}$.

For this purpose, we define 
\begin{align*}
\tilde{\bm{a}}(\tilde{\bm{\psi}}_{h}; \tilde{\bm{\xi}}_{h},& \tilde{\bm{\eta}}_{h}):= R_{e}^{-1}(\nabla \bm{u}_{h}, \nabla \bm{v}_{h}) +L(\bm{w}_{h}; \bm{u}_{h}, \bm{v}_{h})   \\
&+SR_{m}^{-1} (\nabla_{h}\times\bm{B}_{h}, \bm{v}_{h} \times \bm{G}_{h} )- SR_{m}^{-1}( \bm{u}_{h}\times \bm{G}_{h}, \nabla_{h}\times \bm{C}_{h})\\
&+ SR_{m}^{-2}( \nabla_{h}\times \bm{B}_{h}, \nabla_{h}\times \bm{C}_{h}),
\end{align*}
and 
$$
\bm{b}(\tilde{\bm{\xi}}_{h}, \bm{y}_{h}):= -(\nabla\cdot \bm{u}_{h}, q_{h})+(\nabla\cdot \bm{B}_{h}, s_{h}).
$$
Hereafter, $\tilde{\bm{\psi}}_{h}$, $\tilde{\bm{\xi}}_{h}$, $\tilde{\bm{\eta}}_{h}$, $\bm{y}_{h}$ are short for $(\bm{w}_{h},  \bm{G}_{h})$, $(\bm{u}_{h},  \bm{B}_{h})$, $(\bm{v}_{h},  \bm{C}_{h})\in \tilde{\bm{X}}_{h}$ and $\left ( q_{h}, s_{h}\right )\in \bm{Y}_{h}$.

Eliminating $\bm{j}_{h}$ and $\bm{\sigma}_{h}$, Problem \ref{prob:nonlinear} is equivalent to the following form.
\begin{problem} \label{prob:nonlinear-equi}
Given  $\tilde{\bm{\theta}}=(\bm{f}, \bm{0})\in \tilde{\bm{X}}_{h}^{\ast}$,   find $\tilde{\bm{\xi}}_{h}\in \tilde{\bm{X}}_{h}$, $\bm{x}_{h}\in \bm{Y}_{h}$, such that
\begin{align}
\tilde{\bm{a}}(\tilde{\bm{\xi}}_{h}; \tilde{\bm{\xi}}_{h}, \tilde{\bm{\eta}}_{h})+ {\bm{b}}(\tilde{\bm{\eta}}_{h}, \bm{x}_{h})&=\langle \tilde{\bm{\theta}}, \tilde{\bm{\eta}}_{h} \rangle ,\quad \forall~ \tilde{\bm{\eta}}_{h} \in \tilde{\bm{X}}_{h}, \\
{\bm{b}}(\tilde{\bm{\xi}}_{h},\bm{y}_{h})&=0, \quad\forall \bm{y}_{h}\in \bm{Y}_{h}.
\end{align}
where $\langle \tilde{\bm{\theta}}, \tilde{\bm{\eta}}_{h} \rangle:=\langle \bm{f}, \bm{v}_{h}\rangle$.
\end{problem}

To see the equivalence, we note that if $(\bm{u}_{h},\bm{j}_{h}, \bm{\sigma}_{h}, \bm{B}_{h}, p_{h}, r_{h})\in \bm{X}_{h}\times \bm{Y}_{h} $ solves Problem \ref{prob:nonlinear}, then $(\bm{u}_{h}, \bm{B}_{h}, p_{h}, r_{h})\in \tilde{\bm{X}}_{h}\times \bm{Y}_{h}$ solves Problem \ref{prob:nonlinear-equi} with the same data and $\|(\bm{u}_{h}, \bm{B}_{h})\|_{\tilde{\bm{X}}}\leq  \|(\bm{u}_{h},\bm{j}_{h}, \bm{\sigma}_{h}, \bm{B}_{h})\|_{\bm{X}}  $. Conversely, from a solution $(\bm{u}_{h}, \bm{B}_{h}, p_{h}, r_{h})$ of Problem \ref{prob:nonlinear-equi}, we can reconstruct $\left(\bm{u}_{h},\nabla_{h}\times \bm{B}_{h}, \mathbb{P}(\bm{u}_{h}\times \bm{B}_{h}), \bm{B}_{h}, p_{h}, r_{h}\right)\in \bm{X}_{h}\times \bm{Y}_{h} $ which solves Problem \ref{prob:nonlinear} with the same data, and
$$
\left\|\left(\bm{u}_{h},\nabla_{h}\times \bm{B}_{h}, \mathbb{P}(\bm{u}_{h}\times \bm{B}_{h}), \bm{B}_{h}\right)\right\|_{{\bm{X}}}\leq 2\|(\bm{u}_{h}, \bm{B}_{h})\|_{\tilde{\bm{X}}}.
$$

Such a variational form is closely related to the ``curl-formulation'', for example, in \cite{Schotzau.D.2004a}. Here curl operators are replaced by its discrete version ``$\nabla_{h}\times$''.

The existence of solution of the nonlinear discrete scheme \eqref{equi-fem} can be stated as
\begin{theorem}\label{thm:existence-nonlinear}
There exists at least one solution $(\bm{u}_{h}, \bm{B}_{h}, p_{h}, r_{h})\in \tilde{\bm{X}}_{h}\times \bm{Y}_{h}$ solving Problem \ref{prob:nonlinear-equi}.  Therefore there exists at least one solution $(\bm{u}_{h},\bm{j}_{h}, \bm{\sigma}_{h}, \bm{B}_{h}, p_{h}, r_{h})\in \bm{X}_{h}\times \bm{Y}_{h}$ solving Problem \ref{prob:nonlinear}.
\end{theorem}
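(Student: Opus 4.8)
The plan is to establish existence for the reduced Problem \ref{prob:nonlinear-equi} and then transfer the solution back to Problem \ref{prob:nonlinear} through the equivalence already recorded above. Since $\tilde{\bm{X}}_{jh}$ and $\bm{Y}_{h}$ are finite dimensional, I would invoke the standard corollary of Brouwer's fixed point theorem (see \cite{Girault.V;Raviart.P.1986a}): if $P$ is a continuous map on a finite-dimensional inner-product space and $(P(\tilde{\bm{\xi}}_{h}),\tilde{\bm{\xi}}_{h})>0$ for all $\tilde{\bm{\xi}}_{h}$ on a sphere of radius $\rho$, then $P$ vanishes somewhere inside that ball. First I would eliminate the constraint by passing to the divergence-free kernel
$$
\tilde{\bm{X}}_{jh}^{0}:=\{\tilde{\bm{\xi}}_{h}\in\tilde{\bm{X}}_{jh}:\bm{b}(\tilde{\bm{\xi}}_{h},\bm{y}_{h})=0,\ \forall\,\bm{y}_{h}\in\bm{Y}_{h}\},
$$
on which $\nabla\cdot\bm{B}_{h}=0$ holds strongly. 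On this kernel the task reduces to finding $\tilde{\bm{\xi}}_{h}\in\tilde{\bm{X}}_{jh}^{0}$ with $\tilde{\bm{a}}_{j}(\tilde{\bm{\xi}}_{h};\tilde{\bm{\xi}}_{h},\tilde{\bm{\eta}}_{h})=\langle\tilde{\bm{\theta}},\tilde{\bm{\eta}}_{h}\rangle$ for all $\tilde{\bm{\eta}}_{h}\in\tilde{\bm{X}}_{jh}^{0}$, and I would define $P:\tilde{\bm{X}}_{jh}^{0}\to\tilde{\bm{X}}_{jh}^{0}$ by the Riesz representation $(P(\tilde{\bm{\xi}}_{h}),\tilde{\bm{\eta}}_{h})=\tilde{\bm{a}}_{j}(\tilde{\bm{\xi}}_{h};\tilde{\bm{\xi}}_{h},\tilde{\bm{\eta}}_{h})-\langle\tilde{\bm{\theta}},\tilde{\bm{\eta}}_{h}\rangle$. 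Continuity of $P$ is automatic, since $\tilde{\bm{a}}_{j}$ is polynomial in the coordinates, with the coupling term bounded via Lemma \ref{lem:nonlinear-boundedness}.

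The decisive step is coercivity of the diagonal. Testing with $\tilde{\bm{\eta}}_{h}=\tilde{\bm{\xi}}_{h}$, the convection drops out because $L(\bm{u}_{h};\bm{u}_{h},\bm{u}_{h})=0$ by antisymmetry, and the two coupling contributions $\tfrac{S}{R_{m}}(\nabla_{h}\times\bm{B}_{h},\bm{u}_{h}\times\bm{B}_{h})$ and $-\tfrac{S}{R_{m}}(\bm{u}_{h}\times\bm{B}_{h},\nabla_{h}\times\bm{B}_{h})$ cancel exactly, leaving
$$
\tilde{\bm{a}}_{j}(\tilde{\bm{\xi}}_{h};\tilde{\bm{\xi}}_{h},\tilde{\bm{\xi}}_{h})=\frac{1}{R_{e}}\|\nabla\bm{u}_{h}\|^{2}+\frac{S}{R_{m}^{2}}\|\nabla_{h}\times\bm{B}_{h}\|^{2}.
$$
On $\tilde{\bm{X}}_{jh}^{0}$ we have $\nabla\cdot\bm{B}_{h}=0$, so the discrete Poincar\'e inequality of Lemma \ref{lem:poincare} gives $\|\bm{B}_{h}\|_{d}^{2}=\|\bm{B}_{h}\|^{2}+\|\nabla_{h}\times\bm{B}_{h}\|^{2}\lesssim\|\nabla_{h}\times\bm{B}_{h}\|^{2}$, while Poincar\'e for $\bm{u}_{h}$ yields $\|\bm{u}_{h}\|_{1}\lesssim\|\nabla\bm{u}_{h}\|$; hence $\tilde{\bm{a}}_{j}(\tilde{\bm{\xi}}_{h};\tilde{\bm{\xi}}_{h},\tilde{\bm{\xi}}_{h})\gtrsim\|\tilde{\bm{\xi}}_{h}\|_{\tilde{\bm{X}}_{j}}^{2}$. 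Since $\langle\tilde{\bm{\theta}},\tilde{\bm{\xi}}_{h}\rangle=\langle\bm{f},\bm{u}_{h}\rangle\le\|\bm{f}\|_{-1}\|\tilde{\bm{\xi}}_{h}\|_{\tilde{\bm{X}}_{j}}$, it follows that $(P(\tilde{\bm{\xi}}_{h}),\tilde{\bm{\xi}}_{h})\gtrsim\|\tilde{\bm{\xi}}_{h}\|_{\tilde{\bm{X}}_{j}}^{2}-\|\bm{f}\|_{-1}\|\tilde{\bm{\xi}}_{h}\|_{\tilde{\bm{X}}_{j}}$, which is strictly positive once $\|\tilde{\bm{\xi}}_{h}\|_{\tilde{\bm{X}}_{j}}=\rho$ with $\rho$ large enough. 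The fixed point lemma then produces $\tilde{\bm{\xi}}_{h}$ with $P(\tilde{\bm{\xi}}_{h})=0$, i.e.\ a solution on the kernel. Note that this argument is independent of any smallness condition on $R_{e}$, in contrast with the $\bm{B}$-$\bm{E}$ case.

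Finally I would recover the multipliers. The functional $\tilde{\bm{\eta}}_{h}\mapsto\langle\tilde{\bm{\theta}},\tilde{\bm{\eta}}_{h}\rangle-\tilde{\bm{a}}_{j}(\tilde{\bm{\xi}}_{h};\tilde{\bm{\xi}}_{h},\tilde{\bm{\eta}}_{h})$ vanishes on $\tilde{\bm{X}}_{jh}^{0}$, so by the inf-sup condition for $\bm{b}$ (identical in structure to Lemma \ref{lem:inf-sup-b}, combining the Stokes inf-sup with the surjectivity $\nabla\cdot H_{0}^{h}(\mathrm{div})=L_{0,h}^{2}$) there is a unique $\bm{x}_{h}=(p_{h},r_{h})\in\bm{Y}_{h}$ solving $\bm{b}(\tilde{\bm{\eta}}_{h},\bm{x}_{h})=\langle\tilde{\bm{\theta}},\tilde{\bm{\eta}}_{h}\rangle-\tilde{\bm{a}}_{j}(\tilde{\bm{\xi}}_{h};\tilde{\bm{\xi}}_{h},\tilde{\bm{\eta}}_{h})$ for all $\tilde{\bm{\eta}}_{h}$. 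This yields a solution of Problem \ref{prob:nonlinear-equi}, and the reconstruction $\left(\bm{u}_{h},\nabla_{h}\times\bm{B}_{h},\mathbb{P}(\bm{u}_{h}\times\bm{B}_{h}),\bm{B}_{h},p_{h},r_{h}\right)$ then solves Problem \ref{prob:nonlinear}. The main obstacle is the coercivity step: one must verify that \emph{both} nonlinearities — the convection and the magnetohydrodynamic coupling — drop out under the diagonal test, which is exactly the structural feature engineered into the scheme, and then that the surviving $\|\nabla_{h}\times\bm{B}_{h}\|^{2}$ actually controls the full $\|\cdot\|_{d}$ norm. The latter is nontrivial and is precisely what the discrete Poincar\'e inequality of Lemma \ref{lem:poincare} supplies; without it one would control $\|\nabla_{h}\times\bm{B}_{h}\|$ but not $\|\bm{B}_{h}\|$.
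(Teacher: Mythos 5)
Your proposal is correct and follows essentially the same route as the paper: reduction to the kernel $\tilde{\bm{X}}_{jh}^{0}$, coercivity of the diagonal $\tilde{\bm{a}}_{j}(\tilde{\bm{\xi}}_{h};\tilde{\bm{\xi}}_{h},\tilde{\bm{\xi}}_{h})$ via the cancellation of the nonlinear terms together with the discrete Poincar\'e inequality (Lemma \ref{lem:poincare}), existence on the kernel, and recovery of the multipliers by the inf-sup condition of $\bm{b}$ --- your Brouwer fixed-point argument is precisely the proof of the abstract existence result (Theorem \ref{thm:nonlinear-monotone}) that the paper cites from \cite{Girault.V;Raviart.P.1986a} as a black box. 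The only slight imprecision is that the multiplier recovery must use the $\bm{B}$-$\bm{j}$ inf-sup condition of Lemma \ref{lem:inf-supb} (with the stronger $\|\cdot\|_{d}$ norm, whose proof needs the inverse-estimate argument) rather than the $\bm{B}$-$\bm{E}$ Lemma \ref{lem:inf-sup-b} you point to, but that lemma is already established in the paper, so nothing is missing.
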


It suffices to prove the existence of solution of Problem \ref{prob:nonlinear-equi} under the norm
\begin{align}\label{norm}
\|(\bm{u}_{h}, \bm{B}_{h}, p_{h}, r_{h})\|_{A}^{2}:=\|\bm{u}_{h}\|_{1}^{2}+\|\bm{B}_{h}\|_{d}^{2}+\|p_{h}\|^{2}+\|r_{h}\|^{2}.
\end{align}

Define the kernel space ${\tilde{\bm{X}}_{h}^{0}}$ by
$$
{\tilde{\bm{X}}_{h}^{0}}:=\{ \tilde{\bm{\eta}}_{h}\in \tilde{\bm{X}}_{h}: {\bm{b}}(\tilde{\bm{\eta}}_{h}, \bm{y}_{h})=0, ~\forall \bm{y}_{h}\in \bm{Y}_{h}   \}.
$$
Following a general routine of Brezzi theory, we first establish boundedness and inf-sup conditions of the variational form.
\begin{lemma} {\bf (Boundedness)  }  \label{lem:nonlinear-boundedness2}  
With the norms given in \eqref{norm},
 ${\bm{b}}(\cdot,\cdot)$ is bounded and 
  $\tilde{\bm{a}}(\cdot; \cdot,\cdot)$ is bounded in ${\tilde{\bm{X}}_{h}^{0}}$.
 \end{lemma}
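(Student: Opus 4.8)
The plan is to bound the two forms separately, decomposing $\tilde{\bm{a}}_j$ into its five constituent terms and estimating each against the product norm built from $\|\bm{u}_h\|_1$ and $\|\bm{B}_h\|_d$. For $\bm{b}(\cdot,\cdot)$ I would first observe that $|\bm{b}(\tilde{\bm{\xi}}_h, \bm{y}_h)| \le \|\nabla\cdot\bm{u}_h\|\,\|q_h\| + \|\nabla\cdot\bm{B}_h\|\,\|s_h\|$, and since $\|\nabla\cdot\bm{u}_h\|\le\|\bm{u}_h\|_1$ and $\|\nabla\cdot\bm{B}_h\|\le\|\bm{B}_h\|_d$ hold directly by definition of the norms, a single Cauchy--Schwarz inequality gives $|\bm{b}(\tilde{\bm{\xi}}_h,\bm{y}_h)|\lesssim \|\tilde{\bm{\xi}}_h\|_{\tilde{\bm{X}}_j}\|\bm{y}_h\|_{\bm{Y}}$. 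This step uses no structural information and is essentially immediate.

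For $\tilde{\bm{a}}_j(\tilde{\bm{\psi}}_h;\cdot,\cdot)$ I would treat the known argument $\tilde{\bm{\psi}}_h=(\bm{w}_h,\bm{G}_h)$ as fixed, so that the boundedness constant may depend on $\|\bm{w}_h\|_1$ and $\|\bm{G}_h\|_d$, and estimate the five terms in turn. The viscous term $\frac{1}{R_e}(\nabla\bm{u}_h,\nabla\bm{v}_h)$ and the curl--curl term $\frac{S}{R_m^2}(\nabla_h\times\bm{B}_h,\nabla_h\times\bm{C}_h)$ are controlled directly by Cauchy--Schwarz together with $\|\nabla_h\times\bm{B}_h\|\le\|\bm{B}_h\|_d$. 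The convection term $L(\bm{w}_h;\bm{u}_h,\bm{v}_h)$ is handled by the standard trilinear estimate $|((\bm{w}_h\cdot\nabla)\bm{u}_h,\bm{v}_h)|\le \|\bm{w}_h\|_{0,3}\|\nabla\bm{u}_h\|\|\bm{v}_h\|_{0,6}$, after which the Sobolev embeddings $H^1\hookrightarrow L^3$ and $H^1\hookrightarrow L^6$ (the latter encoded by $C_1$) turn this into $\lesssim\|\bm{w}_h\|_1\|\bm{u}_h\|_1\|\bm{v}_h\|_1$.

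The essential terms are the two magnetic coupling terms $\frac{S}{R_m}(\nabla_h\times\bm{B}_h,\bm{v}_h\times\bm{G}_h)$ and $-\frac{S}{R_m}(\bm{u}_h\times\bm{G}_h,\nabla_h\times\bm{C}_h)$. Here I would use that, because $\tilde{\bm{\xi}}_h,\tilde{\bm{\eta}}_h\in\tilde{\bm{X}}_{jh}^{0}$ and the known field $\bm{G}_h$ inherits $\nabla\cdot\bm{G}_h=0$ from the divergence-free property of the iterates, all of $\bm{B}_h$, $\bm{C}_h$ and $\bm{G}_h$ lie in $H_0^h(\mathrm{div}0,\Omega)$. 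Applying Cauchy--Schwarz and then the cross-product bound of Lemma \ref{lem:nonlinear-boundedness}, namely $\|\bm{v}_h\times\bm{G}_h\|\le C_2\|\nabla\bm{v}_h\|\|\nabla_h\times\bm{G}_h\|$ and likewise for $\bm{u}_h\times\bm{G}_h$, yields $|(\nabla_h\times\bm{B}_h,\bm{v}_h\times\bm{G}_h)|\lesssim \|\bm{B}_h\|_d\|\bm{G}_h\|_d\|\bm{v}_h\|_1$ and $|(\bm{u}_h\times\bm{G}_h,\nabla_h\times\bm{C}_h)|\lesssim\|\bm{G}_h\|_d\|\bm{u}_h\|_1\|\bm{C}_h\|_d$. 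Summing the five contributions gives boundedness of $\tilde{\bm{a}}_j(\tilde{\bm{\psi}}_h;\cdot,\cdot)$ on $\tilde{\bm{X}}_{jh}^{0}\times\tilde{\bm{X}}_{jh}^{0}$.

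The only genuine obstacle is the control of the coupling terms, and it has in fact already been overcome upstream: the point is that $\bm{v}_h\times\bm{G}_h$ is, a priori, merely an $L^2$ function being paired with the \emph{weak} curl of a divergence-free field, and the reason it can be dominated by the weak-curl norm $\|\nabla_h\times\bm{G}_h\|$ rather than by a (non-existent) strong curl of $\bm{G}_h$ is precisely Lemma \ref{lem:nonlinear-boundedness}, whose proof rests on the Hodge-mapping approximation estimate of Lemma \ref{Hd-approximation}. Given those earlier results, the present lemma reduces to bookkeeping: identifying which arguments are divergence-free so that Lemma \ref{lem:nonlinear-boundedness} applies, and collecting the constants.
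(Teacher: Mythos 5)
Your proposal is correct and follows essentially the same route as the paper: the convection term is handled by the H\"older--Sobolev estimate $\|\bm{w}_h\|_{0,3}\|\nabla\bm{u}_h\|\|\bm{v}_h\|_{0,6}\lesssim\|\bm{w}_h\|_1\|\bm{u}_h\|_1\|\bm{v}_h\|_1$, the two magnetic coupling terms by Cauchy--Schwarz together with the cross-product bound of Lemma \ref{lem:nonlinear-boundedness} (valid because the known field is divergence-free on the kernel), and the remaining linear terms, including $\bm{b}(\cdot,\cdot)$, directly by Cauchy--Schwarz with the norms $\|\cdot\|_1$, $\|\cdot\|_d$, $\|\cdot\|_{\bm Y}$. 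Your write-up is in fact slightly more explicit than the paper's (which dismisses $\bm{b}$ and the linear terms as obvious), but there is no substantive difference in method.
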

 From the construction of solutions in Brezzi theory, we note that it is enough to prove the boundedness of $\tilde{\bm{a}}(\cdot; \cdot,\cdot)$ in ${\tilde{\bm{X}}_{h}^{0}}$.
\begin{proof}
From Cauchy inequality and imbedding theorem,
$$
\left((\bm{u}_{h}\cdot \nabla)\bm{u}_{h}, \bm{v}_{h}\right)\leq  \|\bm{u}_{h}\|_{0,3}\|\nabla\bm{u}_{h}\|\|\bm{v}_{h}\|_{0,6}\lesssim \|\bm{u}_{h}\|_{1}\| \|\bm{u}_{h}\|_{1}\|\bm{v}_{h}\|_{1}.
$$
Similarly,
 $$
  ((\bm{u}_{h}\cdot \nabla)\bm{v}_{h}, \bm{u}_{h})\lesssim \|\bm{u}_{h}\|_{1}\| \|\bm{u}_{h}\|_{1}\|\bm{v}_{h}\|_{1}.
  $$
Furthermore, from Lemma \ref{lem:nonlinear-boundedness},
  $$
  (  \bm{j}_{h}, \bm{v}_{h}\times \bm{B}_{h})\lesssim  \|\nabla_{h}\times\bm{B}_{h}\|\|\bm{j}_{h}\|_{c}\|\bm{v}_{h}\|_{1}\leq \|\bm{B}_{h}\|_{d}\|\bm{j}_{h}\|_{c}\|\bm{v}_{h}\|_{1},
  $$
  and
  $$
 (\bm{u}_{h}\times \bm{B}_{h},  \nabla_{h}\times \bm{C}_{h})\lesssim \|\nabla_{h}\times\bm{B}_{h}\| \|\bm{u}_{h}\|_{1}\| \bm{C}_{h}\|_{d}\leq \|\bm{B}_{h}\|_{d} \|\bm{u}_{h}\|_{1}\| \bm{C}_{h}\|_{d}.
  $$
  The boundedness of other linear terms are obvious.
\end{proof}

Here we note again that the estimate of the boundedness of $(\bm{u}_{h}\times \bm{B}_{h}, \nabla_{h}\times \bm{C}_{h})$ is a major motivation of introducing the modified $\|\cdot\|_{c}$ and $\|\cdot\|_{d}$ norms, because $\bm{u}_{h}\times \bm{B}_{h}$ may not be in $H^{h}(\mathrm{curl}, \Omega)$, so $\mathrm{curl}$ is actually a discrete operator acting on the $H^{h}_{0}(\mathrm{div})$ function $\bm{B}_{h}$.

\begin{lemma}{\bf (inf-sup condition of ${\bm{b}}(\cdot,\cdot)$)} \label{lem:inf-supb}
There exists a positive constant $\alpha$ such that
$$
\inf_{\bm{y}_{h}\in \bm{Y}_{h}}\sup_{\tilde{\bm{\eta}}_{h}\in \tilde{\bm{X}}_{h}} \frac{{\bm{b}}( \tilde{\bm{\eta}}_{h}, \bm{y}_{h})}{\|\tilde{\bm{\eta}}_{h}\|_{\tilde{\bm{X}}}\|\bm{y}_{h}\|_{\bm{Y}}}\geq \alpha>0.
$$
\end{lemma}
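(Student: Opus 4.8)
The plan is to mimic the proof of Lemma~\ref{lem:inf-sup-b} from the $\bm{B}$-$\bm{E}$ formulation, exploiting the fact that $\bm{b}(\cdot,\cdot)$ decouples into a velocity--pressure block and a magnetic--multiplier block that can be treated independently. Given $\bm{y}_h=(q_h,s_h)\in\bm{Y}_h$, I would construct an explicit test function $\tilde{\bm{\eta}}_h=(\bm{v}_q,\bm{C}_s)\in\tilde{\bm{X}}_{jh}$ and verify the two bounds $\bm{b}(\tilde{\bm{\eta}}_h,\bm{y}_h)\gtrsim\|\bm{y}_h\|_{\bm{Y}}^2$ and $\|\tilde{\bm{\eta}}_h\|_{\tilde{\bm{X}}_{\bm{j}}}\lesssim\|\bm{y}_h\|_{\bm{Y}}$. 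The only genuinely new point compared with Lemma~\ref{lem:inf-sup-b} is that the magnetic component is now measured in the stronger norm $\|\cdot\|_d$, which carries the extra term $\|\nabla_h\times\cdot\|$; the whole difficulty is to choose $\bm{C}_s$ so that this extra term stays under control.

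For the velocity--pressure block I would invoke the standard discrete inf-sup (LBB) stability of the Stokes pair $(\bm{V}_h,Q_h)$, exactly as in Lemma~\ref{lem:inf-sup-b}: for each $q_h\in Q_h$ there is $\bm{v}_q\in\bm{V}_h$ with $-(\nabla\cdot\bm{v}_q,q_h)\ge\alpha_0\|q_h\|^2$ and $\|\bm{v}_q\|_1=\|q_h\|$. This part is untouched by the change of norm, since the velocity norm $\|\cdot\|_1$ is the same in both formulations.

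The crux is the magnetic--multiplier block. Given $s_h\in L^2_{0,h}(\Omega)$, the surjectivity $\nabla\cdot H^h_0(\mathrm{div},\Omega)=L^2_{0,h}(\Omega)$ (exactness of the bottom row of the discrete de Rham sequence in Figure~\ref{exact-sequence}) provides a right inverse, and I would select the one of minimal $L^2$ norm, i.e.\ the unique $\bm{C}_s$ with $\nabla\cdot\bm{C}_s=s_h$ that is $L^2$-orthogonal to the kernel $H^h_0(\mathrm{div}0,\Omega)$; this choice has the standard stability $\|\bm{C}_s\|_{\mathrm{div}}\le C\|s_h\|$. The key observation is that, by exactness, $H^h_0(\mathrm{div}0,\Omega)=\nabla\times H^h_0(\mathrm{curl},\Omega)$, so orthogonality of $\bm{C}_s$ to this space means $(\bm{C}_s,\nabla\times\bm{F}_h)=0$ for every $\bm{F}_h\in H^h_0(\mathrm{curl},\Omega)$; by the very definition of the discrete curl this says $(\nabla_h\times\bm{C}_s,\bm{F}_h)=0$ for all such $\bm{F}_h$, hence $\nabla_h\times\bm{C}_s=\bm{0}$. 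Consequently $\|\bm{C}_s\|_d=\|\bm{C}_s\|_{\mathrm{div}}\le C\|s_h\|$, so the passage to the stronger norm costs nothing.

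Finally I would combine the two blocks: with $\tilde{\bm{\eta}}_h=(\bm{v}_q,\bm{C}_s)$ one gets $\bm{b}(\tilde{\bm{\eta}}_h,\bm{y}_h)=-(\nabla\cdot\bm{v}_q,q_h)+(\nabla\cdot\bm{C}_s,s_h)\ge\alpha_0\|q_h\|^2+\|s_h\|^2\ge\min(\alpha_0,1)\|\bm{y}_h\|_{\bm{Y}}^2$, while $\|\tilde{\bm{\eta}}_h\|_{\tilde{\bm{X}}_{\bm{j}}}^2=\|\bm{v}_q\|_1^2+\|\bm{C}_s\|_d^2\le\|q_h\|^2+C^2\|s_h\|^2\le\max(1,C^2)\|\bm{y}_h\|_{\bm{Y}}^2$; dividing yields the inf-sup constant $\alpha=\min(\alpha_0,1)/\sqrt{\max(1,C^2)}>0$. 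The main obstacle is thus entirely localized in the magnetic block, and it is resolved by the orthogonal right-inverse trick that annihilates the discrete curl; everything else is a near-verbatim repetition of the $\bm{B}$-$\bm{E}$ argument.
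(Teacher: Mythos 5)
Your proof is correct, but your treatment of the magnetic block takes a genuinely different route from the paper's. The paper constructs its test function explicitly: it takes a continuous right inverse $\bm{v}\in H^1_0(\Omega)^3$ of the divergence with $\|\bm{v}\|_1\lesssim\|s_h\|$, sets $\bm{C}_s=\Pi^{\mathrm{div}}\bm{v}$ using the bounded commuting cochain projection (so $\nabla\cdot\bm{C}_s=\Pi^0 s_h=s_h$ and $\|\bm{C}_s\|\lesssim\|s_h\|$), and then must still bound the leftover term $\|\nabla_h\times\bm{C}_s\|$; it does so by a duality argument combining the inverse inequality $\|\nabla\times\bm{F}_h\|\lesssim h^{-1}\|\bm{F}_h\|$ with the approximation estimate $\|\bm{v}-\Pi^{\mathrm{div}}\bm{v}\|\lesssim h\|\bm{v}\|_1$, which in particular leans on quasi-uniformity of the mesh. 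You instead take the minimal-$L^2$-norm solution of $\nabla\cdot\bm{C}_s=s_h$, i.e.\ the one $L^2$-orthogonal to the kernel $H^h_0(\mathrm{div}0,\Omega)$; since discrete exactness (valid because $\Omega$ is assumed contractible and the spaces sit in one discrete de Rham sequence) gives $H^h_0(\mathrm{div}0,\Omega)=\nabla\times H^h_0(\mathrm{curl},\Omega)$, this orthogonality says exactly $(\nabla_h\times\bm{C}_s,\bm{F}_h)=(\bm{C}_s,\nabla\times\bm{F}_h)=0$ for all $\bm{F}_h\in H^h_0(\mathrm{curl},\Omega)$, so the discrete curl vanishes identically rather than merely being bounded. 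This is cleaner: no inverse estimate and no approximation estimate are needed, hence no quasi-uniformity enters this lemma. The one point you compress is the uniform stability $\|\bm{C}_s\|_{\mathrm{div}}\le C\|s_h\|$: it does not follow from minimality alone, but it does follow in one line once one knows that \emph{some} solution is uniformly stable in the $\mathrm{div}$-norm --- precisely the standard fact already invoked in Lemma \ref{lem:inf-sup-b} (or supplied by the paper's own cochain-projection construction) --- because the minimal-norm solution has $L^2$ norm no larger than that stable solution, and all solutions share the same divergence $s_h$. With that one sentence added, your argument is complete.
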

\begin{proof}
It suffices to prove the following two inf-sup conditions of the pressure and magnetic multipliers: there exists constant $\alpha_{0}>0$ such that
$$
\inf_{q_{h}\in Q_{h}}\sup_{\bm{v}_{h}\in \bm{V}_{h}} \frac{(\nabla\cdot \bm{v}_{h}, q_{h})}{\|\bm{v}_{h}\|_{1}\|q_{h}\|}\geq \alpha_{0}>0,
$$
$$
\inf_{s_{h}\in L_{0,h}^{2}(\Omega)}\sup_{\bm{C}_{h}\in H^{h}_{0}(\mathrm{div};\Omega)} \frac{(\nabla\cdot \bm{C}_{h}, s_{h})}{\|\bm{C}_{h}\|_{d}\|s_{h}\|}\geq \alpha_{0}>0.
$$

The first inequality is standard for existing Stokes pairs. Now we focus on the second. The proof is a three dimensional case of the discussion in Chen et al. \cite{Chen.L;Wang.M;Zhong.L.2014a}. We include the proof here for completeness. The major difficulty is that $\|\cdot\|_{d}$ is a stronger norm than $\|\cdot\|_{\mathrm{div}}$.

It is known that for any $s_{h}\in L_{0,h}^{2}(\Omega)$, there exists $\bm{v}\in H^{1}_{0}(\Omega)^{3}$, such that
$$
\nabla\cdot \bm{v}=s_{h},
$$
and
$$
\|\bm{v}\|_{1}\lesssim \|s_{h}\|.
$$
Let $\Pi^{\mathrm{div}}$ and $\Pi^{0}$  be the interpolation in $H_{0}^{h}(\mathrm{div}, \Omega)$ and $L_{0,h}^{2}$ (we refer to \cite{Boffi.D;Brezzi.F;Fortin.M.2013a} for the definition, and \cite{falk2014local} for the local bounded cochain projection, which is bounded in $H(\mathrm{curl})$ and $H(\mathrm{div})$). We denote $\bm{v}_{h}=\Pi^{\mathrm{div}}\bm{v}$.
Then
$$
\nabla\cdot \bm{v}_{h}=\nabla\cdot \Pi^{\mathrm{div}}\bm{v}=\Pi^{0}\nabla\cdot \bm{v}=\Pi^{0}s_{h}=s_{h}.
$$
Note that $\bm{v}\in H_{0}^{1}(\Omega)^{3}$, hence $\Pi^{\mathrm{div}}$ is well-defined and bounded. Therefore 
$$
\|\bm{v}_{h}\|=\|\Pi^{\mathrm{div}}\bm{v}\|\leq \|\Pi^{\mathrm{div}}\|\|\bm{v}\|_{1}\lesssim \|\Pi^{\mathrm{div}}\|\|s_{h}\|. 
$$

Now it suffices to prove $\|\nabla_{h}\times \bm{v}_{h}\|\lesssim \|s_{h}\|$.

In fact, using inverse inequality and approximation results (see, for example, \cite{brenner2008mathematical} and \cite{Boffi.D;Brezzi.F;Fortin.M.2013a}), 
\begin{align*}
(\nabla_{h}\times \bm{v}_{h}, \nabla_{h}\times \bm{v}_{h} )&=(\nabla_{h}\times \bm{v}_{h}-\nabla\times \bm{v}, \nabla_{h}\times \bm{v}_{h} )+(\nabla\times \bm{v}, \nabla_{h}\times \bm{v}_{h} )\\
&=(\bm{v}_{h}- \bm{v},\nabla\times  \nabla_{h}\times \bm{v}_{h} )+(\nabla\times \bm{v}, \nabla_{h}\times \bm{v}_{h} )\\
&\lesssim h^{-1}\|\bm{v}_{h}- \bm{v}\| \| \nabla_{h}\times \bm{v}_{h} \|+\|\bm{v}\|_{1}\|\nabla_{h}\times \bm{v}_{h} \|\\
& \lesssim \|\bm{v}\|_{1}\|\nabla_{h}\times \bm{v}_{h} \|\\
& \lesssim \|s_{h}\|\|\nabla_{h}\times \bm{v}_{h} \|.
\end{align*}
Therefore
$$
\|\nabla_{h}\times \bm{v}_{h}\|\lesssim \|s_{h}\|.
$$
This proves the desired result.

\end{proof}

Next we consider to solve the subsystem related to $\tilde{\bm{a}}(\cdot;\cdot,\cdot)$. We introduce the existence theorem for nonlinear variational forms, which is given in, for example, \cite{Girault.V;Raviart.P.1986a}. Since we focus on the discrete level here, we only given the results for finite dimensional problems.
\begin{theorem}\label{thm:nonlinear-monotone}
Assume that the dimension of $V$ is finite, and there exists a positive constant $\alpha$ such that bounded trilinear form $a(\cdot;\cdot,\cdot)$ on $\bm{V}$ satisfies
$$
a(\bm{v}; \bm{v}, \bm{v})\geq \alpha \|\bm{v}\|^{2}, \quad \forall \bm{v}\in \bm{V}.
$$
Then the problem: given $\bm{f}\in \bm{V}^{\ast}$, find $\bm{u}\in \bm{V}$, such that for all $\bm{v}\in \bm{V}$, 
$$
a(\bm{u}; \bm{u}, \bm{v})=\bm{f}(\bm{v}),
$$ 
has at least one solution.
\end{theorem}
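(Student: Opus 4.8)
The plan is to prove this classical existence result via a standard consequence of Brouwer's fixed-point theorem, exploiting the finite dimensionality of $\bm{V}$. First I would regard $\bm{V}$ as a Euclidean space through its inner product and, for each fixed $\bm{w}\in\bm{V}$, use the Riesz representation to define a map $P:\bm{V}\to\bm{V}$ by
$$
(P(\bm{w}),\bm{v})=a(\bm{w};\bm{w},\bm{v})-\bm{f}(\bm{v}),\quad \forall\,\bm{v}\in\bm{V}.
$$
A solution of the problem is then exactly a zero of $P$, so the whole task reduces to showing that $P$ vanishes somewhere in $\bm{V}$.

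Next I would verify the two hypotheses needed to invoke the fixed-point argument. Continuity of $P$ follows directly from the assumed boundedness of the trilinear form $a(\cdot;\cdot,\cdot)$ together with the linearity of $\bm{f}$: the map $\bm{w}\mapsto a(\bm{w};\bm{w},\cdot)$ depends continuously on $\bm{w}$ on the finite-dimensional space, and Riesz representation is an isometry. For the sign condition, testing against $\bm{v}$ itself and using the coercivity hypothesis gives
$$
(P(\bm{v}),\bm{v})=a(\bm{v};\bm{v},\bm{v})-\bm{f}(\bm{v})\geq \alpha\|\bm{v}\|^{2}-\|\bm{f}\|_{\bm{V}^{\ast}}\|\bm{v}\|=\|\bm{v}\|\left(\alpha\|\bm{v}\|-\|\bm{f}\|_{\bm{V}^{\ast}}\right).
$$
Hence, choosing any radius $\rho>\alpha^{-1}\|\bm{f}\|_{\bm{V}^{\ast}}$, one has $(P(\bm{v}),\bm{v})>0$ whenever $\|\bm{v}\|=\rho$.

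Finally I would apply the following corollary of Brouwer's theorem: if $P$ is continuous on a finite-dimensional inner-product space and $(P(\bm{v}),\bm{v})>0$ on the sphere $\|\bm{v}\|=\rho$, then $P$ has a zero in the closed ball $\|\bm{v}\|\leq\rho$. The proof of this corollary is itself a short contradiction argument: were $P$ nonvanishing on the ball, the map $\bm{v}\mapsto -\rho\,P(\bm{v})/\|P(\bm{v})\|$ would be a continuous self-map of the closed ball and hence, by Brouwer, possess a fixed point, necessarily on the sphere, which would force $(P(\bm{v}),\bm{v})=-\rho\|P(\bm{v})\|<0$, contradicting the sign condition. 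The resulting zero $\bm{u}$ of $P$ satisfies $a(\bm{u};\bm{u},\bm{v})=\bm{f}(\bm{v})$ for all $\bm{v}\in\bm{V}$, which is the claim.

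The only genuine obstacle is the nonconstructive topological input (Brouwer's theorem or its corollary); every other step is a direct estimate. I would also emphasize that this argument yields \emph{existence} but says nothing about uniqueness, which is consistent with the nonlinear, potentially non-monotone convective structure of the form $a(\cdot;\cdot,\cdot)$ and is precisely why additional smallness conditions are needed later for the uniqueness and convergence of the Picard iteration.
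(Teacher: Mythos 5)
Your proof is correct and complete: the reduction to finding a zero of the Riesz-representation map $P$, the sign condition $(P(\bm{v}),\bm{v})>0$ on the sphere $\|\bm{v}\|=\rho$ for any $\rho>\alpha^{-1}\|\bm{f}\|_{\bm{V}^{\ast}}$, and the contradiction argument via $\bm{v}\mapsto -\rho\,P(\bm{v})/\|P(\bm{v})\|$ and Brouwer's fixed-point theorem together constitute the standard existence argument. The paper gives no proof of this theorem at all---it simply cites Girault--Raviart---and your argument is precisely the one found in that reference, so your approach coincides with the paper's (cited) one.
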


It is easy to see that
$$
\tilde{\bm{a}}(\tilde{\bm{\xi}}_{h}; \tilde{\bm{\xi}}_{h}, \tilde{\bm{\xi}}_{h})=R_{e}^{-1}\|\nabla\bm{u}\|^{2}+\|\nabla_{h}\times \bm{B}\|^{2}.
$$
From the discrete Poincar\'{e} inequality (Lemma \ref{lem:poincare}), we have $\tilde{\bm{a}}(\tilde{\bm{\xi}}_{h}; \tilde{\bm{\xi}}_{h}, \tilde{\bm{\xi}}_{h})\gtrsim \|\tilde{\bm{\xi}}_{h}\|_{\tilde{\bm{X}}}^{2}$ on $\tilde{\bm{X}}_{h}^{0}$. Therefore the condition in Theorem \ref{thm:nonlinear-monotone} is satisfied with $V={\tilde{\bm{X}}_{h}^{0}}$ and $a(\cdot;\cdot,\cdot)=\tilde{\bm{a}}(\cdot;\cdot,\cdot)$.

Combining Theorem \ref{thm:nonlinear-monotone} with the boundedness (Lemma \ref{lem:nonlinear-boundedness2}) and the inf-sup condition of $\bm{b}(\cdot, \cdot)$ (Lemma \ref{lem:inf-supb}), we have proved the existence of solution of nonlinear discrete problem (Theorem \ref{thm:existence-nonlinear}).

\subsection{Picard iterations}

In order to solve nonlinear Problem \ref{prob:nonlinear}, the following Picard iteration can be used:

\begin{algorithm}\label{alg:picardj} 
For $n=1,2,3,\dots$,  given $(\bm{u}_{h}^{n-1},  \bm{B}_{h}^{n-1})\in \bm{V}_{h}\times H_{0}^{h}(\mathrm{div}, \Omega)$, $\bm{f}\in \bm{V}_{h}^{\ast}$. Find $(\bm{u}_{h}^{n},\bm{j}_{h}^{n}, \bm{\sigma}^{n}_{h}, \bm{B}_{h}^{n}, p_{h}^{n}, r_{h}^{n})\in \bm{X}_{h}\times \bm{Y}_{h}$, such that for any $(\bm{v}_{h},  \bm{k}_{h}, \bm{\tau}_{h},\bm{C}_{h},  q_{h},s_{h})\in \bm{X}_{h}\times \bm{Y}_{h}$,
\begin{dgroup}[compact]
\begin{dmath}
R_{e}^{-1}(\nabla \bm{u}_{h}^{n}, \nabla \bm{v}_{h})
+L(\bm{u}^{n-1}_{h}; \bm{u}^{n}_{h}, \bm{v}_{h})  +S(  \bm{j}^{n}_{h}, \bm{v}_{h}\times \bm{B}^{n-1}_{h})- (p^{n}_{h}, \nabla\cdot \bm{v}_{h})=\langle \bm{f}, \bm{v}_{h} \rangle,
\label{equi-picardj1}
\end{dmath}
\begin{dmath}
SR_{m}^{-1}(\nabla\times \bm{j}^{n}_{h},  \bm{C}_{h})-SR_{m}^{-1}(\nabla\times \bm{\sigma}^{n}_{h},   \bm{C}_{h})+({r}^{n}_{h}, \nabla\cdot \bm{C}_{h})=0, 
\label{equi-picardj2}
\end{dmath}
\begin{dmath}
SR_{m}^{-1}(\bm{\sigma}^{n}_{h}, \bm{\tau}_{h})-SR_{m}^{-1}(\bm{u}^{n}_{h}\times \bm{B}^{n-1}_{h}, \bm{\tau}_{h})=0,
\label{equi-picardj3}
\end{dmath}
\begin{dmath}
{S}(\bm{j}^{n}_{h}, \bm{k}_{h})-SR_{m}^{-1}(\bm{B}^{n}_{h}, \nabla\times \bm{k}_{h})=0,
\label{equi-picardj4}
\end{dmath}
\begin{dmath}
-(\nabla\cdot \bm{u}^{n}_{h}, q_{h})=0,
\label{equi-picardj5}
\end{dmath}
\begin{dmath}
(\nabla\cdot \bm{B}_{h}^{n}, s_{h})=0. 
\label{equi-picardj6}
\end{dmath}
\end{dgroup}
\end{algorithm}

The following basic properties of Algorithm \ref{alg:picardj} can be
also established similarly.

\begin{theorem}
Any solution of Algorithm \ref{alg:picardj} satisfies 
\begin{enumerate}
\item
Gauss's law of magnetic field in the strong sense:
$$
\nabla\cdot \bm{B}_{h}^{n}=0.
$$
\item
the Lagrange multiplier  $r_{h}^{n}=0$, hence \eqref{equi-picardj2} reduces to 
$$
\nabla\times (\bm{j}_{h}^{n}-\bm{\sigma}_{h}^{n})=\bm{0}.
$$
\item
the energy estimates:
 $$
R_{e}^{-1}\|\nabla\bm{u}^{n}_{h}\|^{2}+S\|\bm{j}_{h}^{n}\|^{2}=\langle \bm{f}, \bm{u}^{n}_{h}\rangle,
$$
and
\begin{align}\label{energy-picard}
\frac{1}{2R_{e}}\|\nabla\bm{u}^{n}_{h}\|^{2}+{S}\|\bm{j}^{n}_{h}\|^{2}\leq \frac{R_{e}}{2}\|\bm{f}\|_{-1}^{2}.
\end{align}
\end{enumerate}
\end{theorem}

We also recast Algorithm \ref{alg:picardj} into an abstract form of Brezzi theory for the convenience of analysis. We will use ${\bm{\xi}}_{h}$, ${\bm{\eta}}_{h}$  to denote $(\bm{u}_{h}, \bm{j}_{h}, \bm{\sigma}_{h}, \bm{B}_{h})$ and $(\bm{v}_{h}, \bm{k}_{h}, \bm{\tau}_{h}, \bm{C}_{h})$ respectively, and use ${\bm{\xi}}_{h}^{-}$ to denote $(\bm{u}^{-}_{h}, \bm{j}_{h}^{-}, \bm{\sigma}^{-}_{h}, \bm{B}^{-}_{h})$ which is the solution of last iteration step (or initial guess).
We assume $\bm{u}_{h}^{-}$ and $\bm{B}_{h}^{-}$
are given as known functions.
For the initial guess, we assume $\|\bm{u}_{h}^{0}\|_{1}$, $\|\bm{B}_{h}^{0}\|$ and $\|\bm{j}_{h}^{0}\|=\|\nabla_{h}\times\bm{B}_{h}^{0}\|$ are bounded.
From the energy estimates, we know $\|\bm{u}_{h}^{-}\|_{1}$, $\|\bm{B}_{h}^{-}\|$ and $\|\nabla_{h}\times\bm{B}_{h}^{-}\|$ are bounded uniformly with the iteration step.

Define
\begin{align*}
\bm{a}\left ({\bm{\xi}}_{h}, {\bm{\eta}}_{h}\right )=& R_{e}^{-1}(\nabla \bm{u}_{h}, \nabla \bm{v}_{h})
+L(\bm{u}^{-}_{h}; \bm{u}_{h}, \bm{v}_{h})+S(  \bm{j}_{h}, \bm{v}_{h}\times \bm{B}^{-}_{h})\\
&+ SR_{m}^{-1}(\nabla\times \bm{j}_{h},  \bm{C}_{h})-SR_{m}^{-1}(\nabla\times \bm{\sigma}_{h},   \bm{C}_{h})+SR_{m}^{-1}(\bm{\sigma}_{h}, \bm{\tau}_{h})\\
&-SR_{m}^{-1}(\bm{u}_{h}\times \bm{B}^{-}_{h}, \bm{\tau}_{h})+{S}(\bm{j}_{h}, \bm{k}_{h})-SR_{m}^{-1}(\bm{B}_{h}, \nabla\times \bm{k}_{h}).
\end{align*}
The variational form with general right hand sides can be written as:
\begin{problem}\label{prob:picardj}
Given $\bm{\xi}_{h}^{-}\in \bm{X}_{h}$, ${\bm{\theta}}=(\bm{f}, \bm{l},  \bm{g}, \bm{h})\in \bm{X}_{h}^{\ast}$, ${\bm{\psi}}=(m, z)\in \bm{Y}_{h}^{\ast} $. Find $(\bm{u}_{h}, \bm{j}_{h}, \bm{\sigma}_{h}, \bm{B}_{h}, p_{h}, r_{h})\in \bm{X}_{h}\times \bm{Y}_{h}$, such that
\begin{align}
\label{picardj1}
{\bm{a}}({\bm{\xi}}_{h}, {\bm{\eta}}_{h})+ \bm{b}({\bm{\eta}}_{h}, \bm{x}_{h})&=\langle {\bm{\theta}}, {\bm{\eta}}_{h} \rangle ,\quad \forall~ {\bm{\eta}}_{h} \in \bm{X}_{h}, \\\label{picardj2}
\bm{b}({\bm{\xi}}_{h},\bm{y}_{h})&=\langle {\bm{\psi}}, \bm{y}_{h} \rangle, \quad\forall \bm{y}_{h}\in \bm{Y}_{h}.
\end{align}
Here $\langle {\bm{\theta}}, {\bm{\eta}}_{h} \rangle:=\langle \bm{f}, \bm{v}_{h}\rangle+\langle \bm{l}, \bm{k}_{h}\rangle +\langle \bm{g}, \bm{\tau}_{h}\rangle+\langle \bm{h}, \bm{C}_{h}\rangle$, and $\langle {\bm{\psi}}, {\bm{y}}_{h} \rangle:=\langle m, q_{h}\rangle+\langle z, s_{h}\rangle$. 
\end{problem}

Problem \ref{prob:picardj} is equivalent to Algorithm \ref{alg:picardj} when $\bm{u}^{-}=\bm{u}^{n-1}$, $\bm{B}^{-}=\bm{B}^{n-1}$ and $\bm{l},  \bm{g}, \bm{h}, m, z=0$. 

We give the main theorem of well-posedness of the Picard iteration scheme:
\begin{theorem} \label{thm:well-posed-picard}
  {\bf (Well-posedness of Picard iterations)}\\
  There exists unique $(\bm{u}_{h}, \bm{j}_{h},
  \bm{\sigma}_{h}, \bm{B}_{h}, p_{h}, r_{h}) $ solving
  Problem \ref{prob:picardj}, and the solution satisfies:
$$
\|(\bm{u}_{h},\bm{j}_{h}, \bm{\sigma}_{h}, \bm{B}_{h})\|_{\bm{X}}^{2}+\|(p_{h}, r_{h})\|_{\bm{Y}}^{2} \leq C (\|(\bm{f}, \bm{l}, \bm{g}, \bm{h})\|_{\bm{X}^{\ast}}^{2}+\|(m,z)\|_{\bm{Y}^{\ast}}^{2}),
$$
where $C$ only depends on the domain,  $\|\bm{u}_{h}^{-}\|_{1}$ and $\|\bm{B}_{h}^{-}\|_{d}$.
\end{theorem}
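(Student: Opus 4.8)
The plan is to apply Brezzi's saddle-point theory to Problem \ref{prob:picardj}, but — following the route already used for the existence proof — to do so on a reduced problem in which the auxiliary variables $\bm{j}_{h}$ and $\bm{\sigma}_{h}$ have been eliminated. First I would use \eqref{picardj1} restricted to the $\bm{k}_{h}$- and $\bm{\tau}_{h}$-test slots to solve for $\bm{j}_{h}$ and $\bm{\sigma}_{h}$: from the $\bm{k}_{h}$-equation (with data $\bm{l}$) and the weak-curl duality $(\bm{B}_{h},\nabla\times\bm{k}_{h})=(\nabla_{h}\times\bm{B}_{h},\bm{k}_{h})$ one gets $\bm{j}_{h}=R_{m}^{-1}\nabla_{h}\times\bm{B}_{h}+S^{-1}\mathbb{P}\bm{l}$, and from the $\bm{\tau}_{h}$-equation (with data $\bm{g}$) one gets $\bm{\sigma}_{h}=\mathbb{P}(\bm{u}_{h}\times\bm{B}_{h}^{-})+R_{m}S^{-1}\mathbb{P}\bm{g}$. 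Substituting these back, again using the weak-curl duality to turn $(\nabla\times\bm{j}_{h},\bm{C}_{h})$ and $(\nabla\times\bm{\sigma}_{h},\bm{C}_{h})$ into pairings against $\nabla_{h}\times\bm{C}_{h}$, produces an equivalent saddle-point problem for $(\bm{u}_{h},\bm{B}_{h},p_{h},r_{h})\in\tilde{\bm{X}}_{jh}\times\bm{Y}_{h}$ whose bilinear forms are exactly $\tilde{\bm{a}}_{j}(\bm{\xi}_{h}^{-};\cdot,\cdot)$ and $\bm{b}(\cdot,\cdot)$, with all data terms (including $\bm{h},m,z$) moved to the right-hand side. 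As in the nonlinear case the norms of the full and reduced unknowns are equivalent up to a factor of $2$, so it suffices to establish well-posedness and the stability bound for the reduced problem and then transfer back.

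For the reduced problem I would verify the three Brezzi hypotheses. Boundedness of $\bm{b}(\cdot,\cdot)$ and of $\tilde{\bm{a}}_{j}(\bm{\xi}_{h}^{-};\cdot,\cdot)$ follows as in Lemma \ref{lem:nonlinear-boundedness2}; the only nonstandard terms are the coupling terms $\frac{S}{R_{m}}(\nabla_{h}\times\bm{B}_{h},\bm{v}_{h}\times\bm{B}_{h}^{-})$ and $\frac{S}{R_{m}}(\bm{u}_{h}\times\bm{B}_{h}^{-},\nabla_{h}\times\bm{C}_{h})$, which are controlled by Lemma \ref{lem:nonlinear-boundedness} together with the uniform energy bound $\|\nabla_{h}\times\bm{B}_{h}^{-}\|\le (R_{e}/2S)^{1/2}\|\bm{f}\|_{-1}$; the data contributions on the right-hand side are bounded by $\|(\bm{f},\bm{l},\bm{g},\bm{h})\|_{\bm{X}_{j}^{\ast}}$ and $\|(m,z)\|_{\bm{Y}^{\ast}}$ through the same estimates. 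This is where the dependence of $C$ on $\|\bm{f}\|_{-1}$ enters. The inf-sup condition of $\bm{b}(\cdot,\cdot)$ is precisely Lemma \ref{lem:inf-supb}.

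The crux is the coercivity of $\tilde{\bm{a}}_{j}(\bm{\xi}_{h}^{-};\cdot,\cdot)$ on the kernel $\tilde{\bm{X}}_{jh}^{0}$, and this is exactly the step where the $\bm{B}$-$\bm{j}$ formulation improves on the $\bm{B}$-$\bm{E}$ one. Testing with $\tilde{\bm{\eta}}_{h}=\tilde{\bm{\xi}}_{h}=(\bm{u}_{h},\bm{B}_{h})$, the convection term vanishes by antisymmetry, $L(\bm{u}_{h}^{-};\bm{u}_{h},\bm{u}_{h})=0$, and — crucially — the two Lorentz-coupling terms $\frac{S}{R_{m}}(\nabla_{h}\times\bm{B}_{h},\bm{u}_{h}\times\bm{B}_{h}^{-})$ and $-\frac{S}{R_{m}}(\bm{u}_{h}\times\bm{B}_{h}^{-},\nabla_{h}\times\bm{B}_{h})$ are the same inner product with opposite sign and cancel \emph{exactly}, even though the frozen field $\bm{B}_{h}^{-}$ differs from $\bm{B}_{h}$. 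This leaves
$$
\tilde{\bm{a}}_{j}(\bm{\xi}_{h}^{-};\tilde{\bm{\xi}}_{h},\tilde{\bm{\xi}}_{h})=\frac{1}{R_{e}}\|\nabla\bm{u}_{h}\|^{2}+\frac{S}{R_{m}^{2}}\|\nabla_{h}\times\bm{B}_{h}\|^{2},
$$
with no residual term to be absorbed and hence no smallness restriction on $R_{e}$ — in sharp contrast to \eqref{SmallRe}. On the kernel $\nabla\cdot\bm{B}_{h}=0$, so the discrete Poincar\'{e} inequality (Lemma \ref{lem:poincare}) gives $\|\bm{B}_{h}\|\lesssim\|\nabla_{h}\times\bm{B}_{h}\|$ and therefore $\tilde{\bm{a}}_{j}(\bm{\xi}_{h}^{-};\tilde{\bm{\xi}}_{h},\tilde{\bm{\xi}}_{h})\gtrsim\|\tilde{\bm{\xi}}_{h}\|_{\tilde{\bm{X}}_{j}}^{2}$ with a constant depending only on $R_{e},S,R_{m}$ and the domain. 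Since the form is linear here, coercivity on the kernel immediately yields both one-sided inf-sup conditions required by Brezzi's theory (by testing each argument against itself), without any appeal to symmetry.

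Finally, Brezzi's theorem gives a unique solution of the reduced problem together with the stability estimate in terms of the reduced data; the boundedness constant of $\tilde{\bm{a}}_{j}$ contributes the dependence of $C$ on $\|\bm{f}\|_{-1}$, while the coercivity constant is data-independent. Transferring through the equivalence reconstructs $(\bm{u}_{h},\bm{j}_{h},\bm{\sigma}_{h},\bm{B}_{h},p_{h},r_{h})$ and the asserted bound. The main obstacle I anticipate is purely bookkeeping: correctly carrying the data $\bm{l},\bm{g},\bm{h},m,z$ through the elimination so that the reduced right-hand side is controlled by $\|(\bm{f},\bm{l},\bm{g},\bm{h})\|_{\bm{X}_{j}^{\ast}}$ and $\|(m,z)\|_{\bm{Y}^{\ast}}$; the genuinely delicate analysis (regularity of $\bm{Z}$ and the Hodge map) has already been isolated in Lemmas \ref{Hd-approximation}--\ref{lem:poincare} and is used only as a black box.
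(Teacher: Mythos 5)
Your proposal is correct and follows essentially the same route as the paper: eliminate $\bm{j}_{h}$ and $\bm{\sigma}_{h}$ to pass to the reduced problem (the paper's Problem \ref{prob:picardj-equi} and equivalence Lemma \ref{lem:equivalence}, where your $\mathbb{P}\bm{l}$, $\mathbb{P}\bm{g}$ are the paper's Riesz representations $\bm{l}_{0}$, $\bm{g}_{0}$), then apply Brezzi theory using boundedness (Lemma \ref{lem:boundedness}), the inf-sup condition of $\bm{b}$ (Lemma \ref{lem:inf-supb}), and the exact cancellation of the Lorentz coupling terms plus the discrete Poincar\'{e} inequality to get coercivity of $\tilde{\bm{a}}_{j}$ on the kernel (Lemma \ref{lem:coercivity-a}), and finally transfer the stability bound back through the equivalence. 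This matches the paper's proof step for step, including the observation that the coercivity constant is independent of the data while the dependence of $C$ on $\|\bm{f}\|_{-1}$ enters through the boundedness of the frozen-coefficient terms.
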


\begin{remark}
If $\bm{u}_{h}^{-}$ and $\bm{B}_{h}^{-}$ are from the iterative scheme Algorithm \ref{alg:picardj},  $\|\bm{u}_{h}^{-}\|_{1}$ and $\|\bm{B}_{h}^{-}\|_{d}$ are uniformly bounded by known data, from the energy estimate \eqref{energy-picard}.
\end{remark}

Next we focus on the proof of this theorem. Similar to the nonlinear problem, we first formally eliminate the variable $\bm{j}_{h}$ by $\nabla_{h}\times \bm{B}_{h}$, and formally eliminate $\bm{\sigma}_{h}$ to get a system with $\bm{u}_{h}$, $\bm{B}_{h}$ and $p_{h}$, $s_{h}$ as the variables (Problem \ref{prob:picardj-equi} below). 
Boundedness and  inf-sup condition of the bilinear form $\bm{b}(\cdot,\cdot)$ are also similar to the nonlinear  problem. Finally, we use the coercivity of the bilinear form $\tilde{\bm{a}}(\tilde{\bm{\xi}}_{h}^{-};\cdot,\cdot)$  on ${\tilde{\bm{X}}_{h}^{0}}$ to get the well-posedness of the Picard iterations.

\begin{problem} \label{prob:picardj-equi}
Given $\tilde{\bm{\xi}}_{h}^{-}\in \tilde{\bm{X}}_{h}$ and $\tilde{\bm{\theta}}=(\tilde{\bm{f}}, \tilde{ \bm{h}})\in \tilde{\bm{X}}_{h}^{\ast}$, $\tilde{\bm{\psi}}=(m,z)\in \bm{Y}_{h}^{\ast}$,   find $\tilde{\bm{\xi}}_{h}\in \tilde{\bm{X}}_{h}$, $\bm{x}_{h}\in \bm{Y}_{h}$, such that
\begin{align}\label{eqn-equivalent-1}
\tilde{\bm{a}}(\tilde{\bm{\xi}}_{h}^{-}; \tilde{\bm{\xi}}_{h}, \tilde{\bm{\eta}}_{h})+ \tilde{\bm{b}}(\tilde{\bm{\eta}}_{h}, \bm{x}_{h})&=\langle \tilde{\bm{\theta}}, \tilde{\bm{\eta}}_{h} \rangle ,\quad \forall~ \tilde{\bm{\eta}}_{h} \in \tilde{\bm{X}}_{h}, \\\label{eqn-equivalent-2}
\tilde{\bm{b}}(\tilde{\bm{\xi}}_{h},\bm{y}_{h})&=\langle \tilde{\bm{\psi}}, \bm{y}_{h} \rangle, \quad\forall \bm{y}_{h}\in \bm{Y}_{h}.
\end{align}
where $\langle \tilde{\bm{f}}, \bm{v}_{h} \rangle :=\langle {\bm{f}}, \bm{v}_{h}\rangle -\langle \bm{l}, \mathbb{P}(\bm{v}_{h}\times \bm{B}_{h}^{-})\rangle$, $\langle \tilde{\bm{h}}, \bm{C}_{h}\rangle:= \langle{\bm{h}}, \bm{C}_{h} \rangle-R_{m}^{-1}\langle \bm{l}, \nabla_{h}\times \bm{C}_{h}\rangle +\langle \bm{g}, \nabla_{h}\times \bm{C}_{h}\rangle$.
\end{problem}

In what follows we use $\|\cdot\|_{c\ast}$ to denote the dual norm of $H^{h}_{0}(\mathrm{curl}, \Omega)$ (with norm $\|\cdot\|_{c}$):
$$
\|\bm{l}\|_{c\ast}:= \sup_{\bm{F}_{h}\in H_{0}^{h}(\mathrm{curl}, \Omega)}  \frac{\langle \bm{l}, \bm{F}_{h}\rangle}{\|\bm{F}_{h}\|_{c}}.
$$
  To see $\tilde{\bm{f}}$ and $\tilde{\bm{h}}$ are bounded linear operators, we note the basic estimates:
\begin{align*}
\langle \bm{l}, \mathbb{P}(\bm{v}_{h}\times \bm{B}_{h}^{-}) \rangle &\leq \|\bm{l}\|_{c\ast}\|\mathbb{P}(\bm{v}_{h}\times \bm{B}_{h}^{-})\|_{c}\\&
\leq \|\bm{l}\|_{c\ast}\|\bm{v}_{h}\times \bm{B}_{h}^{-}\|\\&
\lesssim \|\bm{l}\|_{c\ast}\|\bm{v}_{h}\|_{1}\|\nabla_{h}\times \bm{B}_{h}^{-}\|,
\end{align*}
and
$$
\langle \bm{l}, \nabla_{h}\times \bm{C}_{h}\rangle \leq \|\bm{l}\|_{c\ast}\|\nabla_{h}\times \bm{C}_{h}\|_{c}\leq \|\bm{l}\|_{c\ast}\| \bm{C}_{h}\|_{d},
$$
$$
\langle \bm{g}, \nabla_{h}\times \bm{C}_{h}\rangle \leq \|\bm{g}\|_{c\ast}\|\nabla_{h}\times \bm{C}_{h}\|_{c}\leq \|\bm{g}\|_{c\ast}\| \bm{C}_{h}\|_{d}.
$$

In the following discussion, we will use the Riesz representation $\bm{l}_{0}, \bm{g}_{0}\in H_{0}^{h}(\mathrm{curl}, \Omega)$ of $\bm{l}, \bm{g}\in H_{0}^{h}(\mathrm{curl}, \Omega)^{\ast}$ which are defined by
$$
(\bm{g}_{0}, \bm{\tau}_{h}):=\langle \bm{g}, \bm{\tau}_{h} \rangle, \quad \forall \bm{\tau}_{h}\in H_{0}^{h}(\mathrm{curl}, \Omega),
$$
and
$$
(\bm{l}_{0}, \bm{k}_{h}):=\langle \bm{l}, \bm{k}_{h} \rangle, \quad \forall \bm{k}_{h}\in H_{0}^{h}(\mathrm{curl}, \Omega).
$$
Note that $\|\bm{g}_{0}\|_{c}=\|\bm{g}\|_{c\ast}$ and $\|\bm{l}_{0}\|_{c}=\|\bm{l}\|_{c\ast}$.

For the relation between Problem \ref{prob:picardj} and Problem \ref{prob:picardj-equi}, we have:
\begin{lemma}\label{lem:equivalence}
If $(\bm{u}_{h}, \bm{B}_{h}, p_{h}, r_{h})$ solves Problem \ref{prob:picardj-equi} and 
$$
\|\bm{u}_{h}\|_{1}^{2}+\|\bm{B}_{h}\|_{d}^{2}+\|p_{h}\|^{2}+\|r_{h}\|^{2}\leq c_{1}( \|\tilde{\bm{f}}\|_{-1}^{2}+\|\tilde{\bm{h}}\|_{H^{h}(\mathrm{div})^{\ast}}^{2}+\|(m,z)\|_{\bm{Y}^{\ast}}^{2}),
$$
then
\begin{align*}
(\bm{u}_{h}, \bm{j}_{h}, \bm{\sigma}_{h}, \bm{B}_{h}, p_{h}, r_{h}):=(\bm{u}_{h}, &R_{m}^{-1}\nabla_{h}\times \bm{B}_{h}+S^{-1}\bm{l}_{0}, \\&\mathbb{P}(\bm{u}_{h}\times \bm{B}_{h}^{-})
+S^{-1}{R_m}\bm{g}_{0}, \bm{B}_{h}, p_{h}, r_{h})\in \bm{X}_{h}\times \bm{Y}_{h}
\end{align*}
 solves Problem \ref{prob:picardj}, and there exists a positive constant $c_{2}$, depending on $c_{1}$ and $\|\bm{B}_{h}^{-}\|_{d}$ such that
\begin{align}\label{norm-control2}
\|(\bm{u}_{h}, \bm{j}_{h}, \bm{\sigma}_{h}, \bm{B}_{h})\|^{2}_{\bm{X}}+\|(p_{h}, 
r_{h})\|^{2}_{\bm{Y}}   \leq c_{2}\left ( \|(\bm{f}, \bm{l}, \bm{g}, \bm{h})\|_{\bm{X}^{\ast}}^{2}+\|(m,z)\|_{\bm{Y}^{\ast}}^{2}\right ).
\end{align}

On the other hand, if $(\bm{u}_{h}, \bm{j}_{h}, \bm{\sigma}_{h}, \bm{B}_{h}, p_{h}, r_{h})$ solves Problem \ref{prob:picardj}, then $(\bm{u}_{h}, \bm{B}_{h}, p_{h}, r_{h})$ solves Problem \ref{prob:picardj-equi}.

\end{lemma}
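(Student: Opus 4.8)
The plan is to prove the two implications separately, treating the forward (reconstruction) direction as the substantive part and the reverse (elimination) direction as essentially its inverse, and then to assemble the norm bound at the end.

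For the reverse direction, suppose $(\bm{u}_{h}, \bm{j}_{h}, \bm{\sigma}_{h}, \bm{B}_{h}, p_{h}, r_{h})$ solves Problem \ref{prob:picardj}. I would first extract the pointwise identities for $\bm{j}_{h}$ and $\bm{\sigma}_{h}$. Rewriting $(\bm{B}_{h}, \nabla\times \bm{k}_{h}) = (\nabla_{h}\times \bm{B}_{h}, \bm{k}_{h})$ in \eqref{equi-picardj4} via the definition of the discrete curl, together with the Riesz identity $(\bm{l}_{0}, \bm{k}_{h}) = \langle \bm{l}, \bm{k}_{h}\rangle$, yields $(S\bm{j}_{h} - SR_{m}^{-1}\nabla_{h}\times \bm{B}_{h} - \bm{l}_{0}, \bm{k}_{h}) = 0$ for all $\bm{k}_{h}\in H_{0}^{h}(\mathrm{curl}, \Omega)$. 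Since every factor lies in $H_{0}^{h}(\mathrm{curl}, \Omega)$, taking $\bm{k}_{h}$ equal to the bracketed quantity forces $\bm{j}_{h} = R_{m}^{-1}\nabla_{h}\times \bm{B}_{h} + S^{-1}\bm{l}_{0}$. An identical argument applied to \eqref{equi-picardj3}, using $(\bm{u}_{h}\times \bm{B}_{h}^{-}, \bm{\tau}_{h}) = (\mathbb{P}(\bm{u}_{h}\times \bm{B}_{h}^{-}), \bm{\tau}_{h})$ for $\bm{\tau}_{h}\in H_{0}^{h}(\mathrm{curl}, \Omega)$, gives $\bm{\sigma}_{h} = \mathbb{P}(\bm{u}_{h}\times \bm{B}_{h}^{-}) + S^{-1}R_{m}\bm{g}_{0}$. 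Substituting these expressions into \eqref{equi-picardj1} and \eqref{equi-picardj2}, while leaving \eqref{equi-picardj5}--\eqref{equi-picardj6} untouched, reproduces exactly the two lines of Problem \ref{prob:picardj-equi}, so $(\bm{u}_{h}, \bm{B}_{h}, p_{h}, r_{h})$ solves the reduced problem.

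For the forward direction, I would define $\bm{j}_{h}$ and $\bm{\sigma}_{h}$ by the stated reconstruction formulas and verify all six equations. Equations \eqref{equi-picardj3} and \eqref{equi-picardj4} hold directly: the discrete-curl term cancels $(\bm{B}_{h}, \nabla\times \bm{k}_{h})$ in \eqref{equi-picardj4}, and the projected term cancels $(\bm{u}_{h}\times \bm{B}_{h}^{-}, \bm{\tau}_{h})$ in \eqref{equi-picardj3} via $\mathbb{P}$, leaving only $(\bm{l}_{0}, \bm{k}_{h}) = \langle \bm{l}, \bm{k}_{h}\rangle$ and $(\bm{g}_{0}, \bm{\tau}_{h}) = \langle \bm{g}, \bm{\tau}_{h}\rangle$. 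For \eqref{equi-picardj1} I would expand $S(\bm{j}_{h}, \bm{v}_{h}\times \bm{B}_{h}^{-})$; its $\bm{l}_{0}$-contribution equals $(\bm{l}_{0}, \mathbb{P}(\bm{v}_{h}\times \bm{B}_{h}^{-})) = \langle \bm{l}, \mathbb{P}(\bm{v}_{h}\times \bm{B}_{h}^{-})\rangle$, since $\bm{l}_{0}\in H_{0}^{h}(\mathrm{curl}, \Omega)$ is $L^{2}$-orthogonal to $I-\mathbb{P}$, and this is precisely the correction converting $\bm{f}$ into $\tilde{\bm{f}}$, while the remaining terms match the $\bm{v}_{h}$-part of $\tilde{\bm{a}}_{j}$. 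For \eqref{equi-picardj2} I would convert the strong curls using $(\nabla\times \bm{j}_{h}, \bm{C}_{h}) = (\bm{j}_{h}, \nabla_{h}\times \bm{C}_{h})$ and likewise for $\bm{\sigma}_{h}$; the resulting $\bm{l}_{0}$- and $\bm{g}_{0}$-terms reproduce exactly the corrections defining $\tilde{\bm{h}}$, and the surviving terms match the $\bm{C}_{h}$-part of $\tilde{\bm{a}}_{j}$. Equations \eqref{equi-picardj5}--\eqref{equi-picardj6} are inherited verbatim from the constraint block of Problem \ref{prob:picardj-equi}.

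It then remains to establish \eqref{norm-control2}. I would bound the reconstructed components directly: $\|\bm{j}_{h}\|_{c} \le R_{m}^{-1}\|\bm{B}_{h}\|_{d} + S^{-1}\|\bm{l}\|_{c\ast}$ using $\|\bm{l}_{0}\|_{c} = \|\bm{l}\|_{c\ast}$, and $\|\bm{\sigma}_{h}\|_{c} \lesssim \|\bm{u}_{h}\|_{1}\|\nabla_{h}\times \bm{B}_{h}^{-}\| + S^{-1}R_{m}\|\bm{g}\|_{c\ast}$ by Lemma \ref{lem:nonlinear-boundedness}, where $\|\nabla_{h}\times \bm{B}_{h}^{-}\|$ is uniformly bounded by the energy estimate. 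Combining these with the hypothesized bound on $(\bm{u}_{h}, \bm{B}_{h}, p_{h}, r_{h})$ and the already-recorded estimates $\|\tilde{\bm{f}}\|_{-1} \lesssim \|\bm{f}\|_{-1} + \|\bm{l}\|_{c\ast}\|\nabla_{h}\times \bm{B}_{h}^{-}\|$ and $\|\tilde{\bm{h}}\|_{H(\mathrm{div})^{\ast}} \lesssim \|\bm{h}\|_{H(\mathrm{div})^{\ast}} + R_{m}^{-1}\|\bm{l}\|_{c\ast} + \|\bm{g}\|_{c\ast}$, everything is absorbed into $\|(\bm{f}, \bm{l}, \bm{g}, \bm{h})\|_{\bm{X}_{j}^{\ast}}^{2} + \|(m,z)\|_{\bm{Y}^{\ast}}^{2}$. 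I expect the main obstacle to be purely bookkeeping: the consistent use of the adjoint relation between $\nabla\times$ on $H_{0}^{h}(\mathrm{curl}, \Omega)$ and $\nabla_{h}\times$ on $H_{0}^{h}(\mathrm{div}, \Omega)$, and of the projection $\mathbb{P}$ wherever the non-conforming product $\bm{u}_{h}\times \bm{B}_{h}^{-}$ (or $\bm{v}_{h}\times \bm{B}_{h}^{-}$) appears, since it is these pairings that make the reconstruction and its inverse reproduce the corrected data $\tilde{\bm{f}}$ and $\tilde{\bm{h}}$ exactly.
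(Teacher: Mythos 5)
Your proposal is correct and follows essentially the same route as the paper: extract the identities $\bm{j}_{h}=R_{m}^{-1}\nabla_{h}\times\bm{B}_{h}+S^{-1}\bm{l}_{0}$ and $\bm{\sigma}_{h}=\mathbb{P}(\bm{u}_{h}\times\bm{B}_{h}^{-})+S^{-1}R_{m}\bm{g}_{0}$ from the $\bm{k}_{h}$- and $\bm{\tau}_{h}$-equations, use the adjointness of $\nabla\times$ and $\nabla_{h}\times$ together with $\mathbb{P}$ to match the corrected data $\tilde{\bm{f}}$, $\tilde{\bm{h}}$ in both directions, and close the norm bound via $\|\bm{l}_{0}\|_{c}=\|\bm{l}\|_{c\ast}$, Lemma \ref{lem:nonlinear-boundedness}, and the energy bound on $\|\nabla_{h}\times\bm{B}_{h}^{-}\|$. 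The only difference is that you spell out the verification of the six equations that the paper dismisses as ``easy to see,'' which is harmless.
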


\begin{proof}
In Problem \ref{prob:picardj},  we take $\bm{v}_{h},  \bm{k}_{h}, \bm{C}_{h},  q_{h},s_{h}=0$ in \eqref{picardj1} to see   
\begin{align}\label{def:sigma}
\bm{\sigma}_{h}=\mathbb{P}(\bm{u}_{h}\times \bm{B}_{h}^{-})+S^{-1}{R_m}\bm{g}_{0},
\end{align}
and
take $\bm{v}_{h},  \bm{\tau}_{h},\bm{C}_{h},  q_{h},s_{h}=0$ in \eqref{picardj1} to get
\begin{align}\label{def:j}
\bm{j}_{h}=R_m^{-1}\nabla_{h}\times \bm{B}_{h}+S^{-1}\bm{l}_{0}.
\end{align}

If $(\bm{u}_{h}, \bm{B}_{h}, p_{h}, r_{h})$ solves Problem \ref{prob:picardj-equi}, and
$$
\|\bm{u}_{h}\|_{1}^{2}+\|\bm{B}_{h}\|_{d}^{2}+\|p_{h}\|^{2}+\|r_{h}\|^{2}\leq c_{1}\left (\|\tilde{\bm{f}}\|_{-1}^{2}+\|\tilde{\bm{h}}\|_{H^{h}(\mathrm{div})^{\ast}}^{2}+\|(m,z)\|_{\bm{Y}^{\ast}}^{2}\right ),
$$
it is easy to see from \eqref{def:sigma} and \eqref{def:j} that $(\bm{u}_{h}, R_{m}^{-1}\nabla_{h}\times \bm{B}_{h}+S^{-1}\bm{l}_{0}, \mathbb{P}(\bm{u}_{h}\times \bm{B}_{h}^{-})+S^{-1}{R_m}\bm{g}_{0}, \bm{B}_{h}, p_{h}, r_{h})$ solves Problem  \ref{prob:picardj}, and
\begin{align*}
R_{m}^{-2}\|\nabla_{h}\times \bm{B}_{h}\|_{c}^{2}+\|\bm{B}_{h}\|_{d}^{2}&=\|\bm{B}_{h}\|^{2}+\|\nabla\cdot \bm{B}_{h}\|^{2}+(1+R_{m}^{-2})\|\nabla_{h}\times \bm{B}_{h}\|^{2}\\
&\lesssim \|\bm{B}_{h}\|_{d}^{2},
\end{align*}
\begin{align*}
\|\mathbb{P}(\bm{u}_{h}\times \bm{B}_{h}^{-})\| \leq \|\bm{u}_{h}\times \bm{B}_{h}^{-}\|\lesssim \|\bm{u}_{h}\|_{1}\|\nabla_{h}\times\bm{B}_{h}^{-}\|.
\end{align*}
This implies \eqref{norm-control2}.


On the other hand, solution of Problem \ref{prob:picardj} also solves Problem \ref{prob:picardj-equi} by substituting \eqref{def:sigma} and \eqref{def:j} into \eqref{picardj1}.

\end{proof}

Once the well-posedness of Problem \ref{prob:picardj-equi} is established,  the first part of Lemma \ref{lem:equivalence} will imply existence and stability of the original Problem \ref{prob:picardj}, and the second part will imply the uniqueness.
Hence it suffices to prove well-posedness of Problem \ref{prob:picardj-equi} under the norm $\|\cdot\|_{A}$ (\eqref{norm}).

Similar to the nonlinear case, we have
\begin{lemma} {\bf (Boundedness)  }  \label{lem:boundedness}
 $\tilde{\bm{a}}(\tilde{\bm{\xi}}_{h}^{-};\cdot,\cdot)$ is a bounded bilinear form on ${\tilde{\bm{X}}_{h}^{0}}$ with respect to $\|\cdot\|_{A}$ (\eqref{norm})\end{lemma}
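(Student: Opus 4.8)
The plan is to bound $\tilde{\bm{a}}_{j}(\tilde{\bm{\xi}}_{h}^{-}; \tilde{\bm{\xi}}_{h}, \tilde{\bm{\eta}}_{h})$ term by term, treating the fixed first argument $\tilde{\bm{\xi}}_{h}^{-}=(\bm{u}_{h}^{-}, \bm{B}_{h}^{-})$ through the a priori bounds on $\|\bm{u}_{h}^{-}\|_{1}$ and $\|\nabla_{h}\times\bm{B}_{h}^{-}\|$ recorded just before Problem \ref{prob:picardj}. On $\tilde{\bm{X}}_{jh}^{0}$ the $\|\cdot\|_{A}$ norm reduces to $\|\tilde{\bm{\xi}}_{h}\|_{\tilde{\bm{X}}_{j}}^{2}=\|\bm{u}_{h}\|_{1}^{2}+\|\bm{B}_{h}\|_{d}^{2}$, so it suffices to produce a constant $M$, depending only on the domain and the (uniformly bounded) data of the previous iterate, with $|\tilde{\bm{a}}_{j}(\tilde{\bm{\xi}}_{h}^{-}; \tilde{\bm{\xi}}_{h}, \tilde{\bm{\eta}}_{h})|\le M\|\tilde{\bm{\xi}}_{h}\|_{\tilde{\bm{X}}_{j}}\|\tilde{\bm{\eta}}_{h}\|_{\tilde{\bm{X}}_{j}}$.

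The two diffusion-type terms are immediate: $\frac{1}{R_{e}}(\nabla\bm{u}_{h},\nabla\bm{v}_{h})\le\frac{1}{R_{e}}\|\bm{u}_{h}\|_{1}\|\bm{v}_{h}\|_{1}$ and $\frac{S}{R_{m}^{2}}(\nabla_{h}\times\bm{B}_{h},\nabla_{h}\times\bm{C}_{h})\le\frac{S}{R_{m}^{2}}\|\bm{B}_{h}\|_{d}\|\bm{C}_{h}\|_{d}$ follow from Cauchy--Schwarz and the definition of $\|\cdot\|_{d}$. For the convection term $L(\bm{u}_{h}^{-};\bm{u}_{h},\bm{v}_{h})$ I would bound each half by H\"{o}lder's inequality and the $L^{6}$ Sobolev embedding, as $|((\bm{u}_{h}^{-}\cdot\nabla)\bm{u}_{h},\bm{v}_{h})|\le\|\bm{u}_{h}^{-}\|_{0,3}\|\nabla\bm{u}_{h}\|\|\bm{v}_{h}\|_{0,6}\lesssim\|\bm{u}_{h}^{-}\|_{1}\|\bm{u}_{h}\|_{1}\|\bm{v}_{h}\|_{1}$, exactly as in the boundedness lemma for the $\bm{B}$-$\bm{E}$ formulation; since $\|\bm{u}_{h}^{-}\|_{1}$ is uniformly bounded this contributes an admissible constant.

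The essential terms are the two magnetic coupling terms, where the point is to invoke Lemma \ref{lem:nonlinear-boundedness}. For the third term I would write $\frac{S}{R_{m}}(\nabla_{h}\times\bm{B}_{h},\bm{v}_{h}\times\bm{B}_{h}^{-})\le\frac{S}{R_{m}}\|\nabla_{h}\times\bm{B}_{h}\|\,\|\bm{v}_{h}\times\bm{B}_{h}^{-}\|$ and apply Lemma \ref{lem:nonlinear-boundedness} to the cross product, $\|\bm{v}_{h}\times\bm{B}_{h}^{-}\|\lesssim\|\bm{v}_{h}\|_{1}\|\nabla_{h}\times\bm{B}_{h}^{-}\|$; using $\|\nabla_{h}\times\bm{B}_{h}\|\le\|\bm{B}_{h}\|_{d}$ and the uniform bound on $\|\nabla_{h}\times\bm{B}_{h}^{-}\|$ yields $\lesssim\|\bm{B}_{h}\|_{d}\|\bm{v}_{h}\|_{1}$. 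The fourth term $-\frac{S}{R_{m}}(\bm{u}_{h}\times\bm{B}_{h}^{-},\nabla_{h}\times\bm{C}_{h})$ is handled symmetrically, applying Lemma \ref{lem:nonlinear-boundedness} to $\|\bm{u}_{h}\times\bm{B}_{h}^{-}\|\lesssim\|\bm{u}_{h}\|_{1}\|\nabla_{h}\times\bm{B}_{h}^{-}\|$ and $\|\nabla_{h}\times\bm{C}_{h}\|\le\|\bm{C}_{h}\|_{d}$, giving $\lesssim\|\bm{u}_{h}\|_{1}\|\bm{C}_{h}\|_{d}$. Summing the five estimates produces the claimed bound.

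The genuine obstacle is concentrated entirely in these coupling terms: the factor $\bm{v}_{h}\times\bm{B}_{h}^{-}$ (resp. $\bm{u}_{h}\times\bm{B}_{h}^{-}$) is only in $L^{2}$ a priori and cannot be controlled naively, since $\bm{v}_{h}\times\bm{B}_{h}^{-}\notin H_{0}^{h}(\mathrm{curl})$ in general, so the discrete curl of $\bm{B}_{h}$ is all that is available. The resolution is precisely Lemma \ref{lem:nonlinear-boundedness}, whose proof rests on the Hodge-mapping approximation Lemma \ref{Hd-approximation}: it is this discrete regularity that lets one trade the missing $L^{\infty}$/$L^{3}$ control of $\bm{B}_{h}^{-}$ for the computable quantity $\|\nabla_{h}\times\bm{B}_{h}^{-}\|$. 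Once that lemma is in hand the remaining work is the routine bookkeeping sketched above, and this is exactly the motivation, noted in the text, for equipping $H_{0}^{h}(\mathrm{div})$ with the stronger norm $\|\cdot\|_{d}$ and $H_{0}^{h}(\mathrm{curl})$ with the weaker norm $\|\cdot\|_{c}$.
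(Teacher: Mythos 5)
Your proof is correct and follows essentially the same route as the paper: the paper proves this lemma by pointing back to the nonlinear case (Lemma \ref{lem:nonlinear-boundedness2}), whose proof is exactly your term-by-term argument — Cauchy--Schwarz for the diffusion terms, H\"{o}lder plus Sobolev embedding for the convection term, and Lemma \ref{lem:nonlinear-boundedness} for the two magnetic coupling terms — together with the remark that the resulting constant depends on $\|\bm{u}_{h}^{-}\|_{0,3}$ and $\|\nabla_{h}\times\bm{B}_{h}^{-}\|$, which are uniformly bounded by the energy estimates. Your identification of the coupling terms as the genuine difficulty, resolved by the Hodge-mapping estimate behind Lemma \ref{lem:nonlinear-boundedness}, matches the paper's stated motivation for the $\|\cdot\|_{d}$ and $\|\cdot\|_{c}$ norms.
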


We note that the bound depends on the domain and $\|\bm{u}_{h}^{-}\|_{0,3}$, $\|\nabla_{h}\times\bm{B}_{h}^{-}\|$. By the energy estimates, we know these terms are bounded by known data.

The boundedness and inf-sup condition of $\bm{b}({\cdot,\cdot})$ are the same as the nonlinear problem (Lemma \ref{lem:nonlinear-boundedness2}, Lemma \ref{lem:inf-supb}).

Next we show the coercivity of $\tilde{\bm{a}}(\tilde{\bm{\xi}}_{h}^{-};\cdot,\cdot)$  on ${\tilde{\bm{X}}_{h}^{0}}$:
\begin{lemma}\label{lem:coercivity-a}
There exists a positive constant $\alpha$ such that
$$
\tilde{\bm{a}}(\tilde{\bm{\xi}}_{h}^{-}; \tilde{\bm{\xi}}_{h}, \tilde{\bm{\xi}}_{h})\geq \alpha (\|\bm{u}_{h}\|_{1}^{2}+  \| \bm{B}_{h}\|_{d}^{2}),\quad \forall \tilde{\bm{\xi}}_{h}\in {\tilde{\bm{X}}_{h}^{0}}.
$$
\end{lemma}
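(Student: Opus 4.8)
The plan is to test the frozen-coefficient form $\tilde{\bm{a}}_{j}(\tilde{\bm{\xi}}_{h}^{-};\cdot,\cdot)$ on the diagonal, i.e.\ set both the second and third arguments equal to $\tilde{\bm{\xi}}_{h}=(\bm{u}_{h},\bm{B}_{h})$, and show that every indefinite (convective and magnetic-coupling) contribution drops out, leaving a manifestly positive quantity. Substituting $\bm{w}_{h}=\bm{u}_{h}^{-}$, $\bm{G}_{h}=\bm{B}_{h}^{-}$, $\bm{v}_{h}=\bm{u}_{h}$, $\bm{C}_{h}=\bm{B}_{h}$ into the definition of $\tilde{\bm{a}}_{j}$, I would first note that the convection term vanishes, $L(\bm{u}_{h}^{-};\bm{u}_{h},\bm{u}_{h})=0$, by the skew-symmetry built into $L$; this holds for \emph{any} advecting field $\bm{u}_{h}^{-}$, which is precisely why the linearization does not spoil coercivity. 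Next, the two magnetic-coupling terms
\[
\frac{S}{R_{m}}(\nabla_{h}\times\bm{B}_{h},\bm{u}_{h}\times\bm{B}_{h}^{-})-\frac{S}{R_{m}}(\bm{u}_{h}\times\bm{B}_{h}^{-},\nabla_{h}\times\bm{B}_{h})
\]
cancel identically by the symmetry of the $L^{2}$ inner product. Hence only the two diffusion-type terms survive, and
\[
\tilde{\bm{a}}_{j}(\tilde{\bm{\xi}}_{h}^{-};\tilde{\bm{\xi}}_{h},\tilde{\bm{\xi}}_{h})=\frac{1}{R_{e}}\|\nabla\bm{u}_{h}\|^{2}+\frac{S}{R_{m}^{2}}\|\nabla_{h}\times\bm{B}_{h}\|^{2}.
\]

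The second step is to upgrade this to control of the full norms $\|\bm{u}_{h}\|_{1}$ and $\|\bm{B}_{h}\|_{d}$. For the velocity, since $\bm{u}_{h}\in\bm{V}_{h}\subset H_{0}^{1}(\Omega)^{3}$, the ordinary Poincar\'e inequality gives $\|\bm{u}_{h}\|_{1}^{2}\lesssim\|\nabla\bm{u}_{h}\|^{2}$. For the magnetic field I would exploit the kernel constraint: because $\tilde{\bm{\xi}}_{h}\in\tilde{\bm{X}}_{jh}^{0}$, we have $(\nabla\cdot\bm{B}_{h},s_{h})=0$ for all $s_{h}\in L_{0,h}^{2}(\Omega)$, and since $\nabla\cdot\bm{B}_{h}\in L_{0,h}^{2}(\Omega)$ itself, choosing $s_{h}=\nabla\cdot\bm{B}_{h}$ forces $\nabla\cdot\bm{B}_{h}=0$ pointwise. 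Thus $\bm{B}_{h}\in H_{0}^{h}(\mathrm{div}0,\Omega)$, and the discrete Poincar\'e inequality of Lemma~\ref{lem:poincare} yields $\|\bm{B}_{h}\|\lesssim\|\nabla_{h}\times\bm{B}_{h}\|$. Since the divergence term in $\|\cdot\|_{d}$ now vanishes, we conclude $\|\bm{B}_{h}\|_{d}^{2}=\|\bm{B}_{h}\|^{2}+\|\nabla_{h}\times\bm{B}_{h}\|^{2}\lesssim\|\nabla_{h}\times\bm{B}_{h}\|^{2}$.

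Combining these two bounds gives
\[
\|\bm{u}_{h}\|_{1}^{2}+\|\bm{B}_{h}\|_{d}^{2}\lesssim\|\nabla\bm{u}_{h}\|^{2}+\|\nabla_{h}\times\bm{B}_{h}\|^{2}\lesssim\tilde{\bm{a}}_{j}(\tilde{\bm{\xi}}_{h}^{-};\tilde{\bm{\xi}}_{h},\tilde{\bm{\xi}}_{h}),
\]
with $\alpha^{-1}$ depending only on $R_{e}$, $S$, $R_{m}$ and the two Poincar\'e constants (continuous for $\bm{u}_{h}$, discrete for $\bm{B}_{h}$), which is exactly the asserted coercivity. I do not expect a genuine obstacle: the algebraic cancellations are immediate and, crucially, independent of $\tilde{\bm{\xi}}_{h}^{-}$, while the one nontrivial ingredient---bounding $\|\bm{B}_{h}\|$ by $\|\nabla_{h}\times\bm{B}_{h}\|$ on strongly divergence-free fields---has already been isolated in Lemma~\ref{lem:poincare}. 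The only point that needs care is checking that kernel membership delivers $\nabla\cdot\bm{B}_{h}=0$ \emph{strongly} rather than merely weakly, since it is this pointwise vanishing that both kills the divergence term in $\|\cdot\|_{d}$ and legitimizes the application of Lemma~\ref{lem:poincare}.
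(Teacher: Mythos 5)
Your proposal is correct and follows essentially the same route as the paper: test on the diagonal so the convection and magnetic-coupling terms cancel, leaving $\frac{1}{R_{e}}\|\nabla \bm{u}_{h}\|^{2}+\frac{S}{R_{m}^{2}}\|\nabla_{h}\times \bm{B}_{h}\|^{2}$, then use the kernel constraint to get $\nabla\cdot\bm{B}_{h}=0$ strongly and apply the discrete Poincar\'{e} inequality of Lemma~\ref{lem:poincare} together with the standard Poincar\'{e} inequality for $\bm{u}_{h}$. Your write-up is in fact more explicit than the paper's on the two points it leaves implicit, namely the algebraic cancellations and the step from weak kernel membership to pointwise $\nabla\cdot\bm{B}_{h}=0$ via the choice $s_{h}=\nabla\cdot\bm{B}_{h}\in L_{0,h}^{2}(\Omega)$.
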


\begin{proof}
Taking $\bm{v}_{h}=\bm{u}_{h}$, $\bm{C}_{h}=\bm{B}_{h}$,
$$
\tilde{\bm{a}}(\tilde{\bm{\xi}}_{h}^{-}; \tilde{\bm{\xi}}_{h}, \tilde{\bm{\xi}}_{h})=R_{e}^{-1}\| \nabla \bm{u}_{h}\|^{2}+SR_{m}^{-2} \| \nabla_{h}\times \bm{B}_{h}\|^{2}.
$$

 From Poincar\'{e}'s inequality (Lemma \ref{lem:poincare}) and $\nabla\cdot \bm{B}_{h}=0$ on ${\tilde{\bm{X}}_{h}^{0}}$:
$$
\|\bm{B}_{h}\|\lesssim \| \nabla_{h}\times \bm{B}_{h}\|.
$$

Hence 
$$
\|\bm{B}_{h}\|_{d}\lesssim \| \nabla_{h}\times \bm{B}_{h}\|,
$$
and there exists a positive constant $\alpha$ which only depends on the domain and $R_{e}$, $R_{m}$, $S$ such that
$$
\tilde{\bm{a}}(\tilde{\bm{\xi}}_{h}^{-}; \tilde{\bm{\xi}}_{h}, \tilde{\bm{\xi}}_{h})\geq \alpha (\| \bm{u}_{h}\|_{1}^{2}+  \| \bm{B}_{h}\|_{d}^{2}).
$$
\end{proof}

From Lemma \ref{lem:boundedness}, Lemma \ref{lem:inf-supb} and Lemma
\ref{lem:coercivity-a}, we have proved the well-posedness of Problem
\ref{prob:picardj-equi}. From Lemma \ref{lem:equivalence}, this
shows the well-posedness of Problem \ref{prob:picardj}, and hence
Algorithm \ref{alg:picardj} as a special case.

\section{Convergence analysis} \label{sec:convergence}

\subsection{Convergence of Picard iterations}

There is a general argument to prove the convergence of Picard iterations under the condition of small data, which guarantees the uniqueness of the nonlinear scheme (c.f. Girault and Raviart \cite{Girault.V;Raviart.P.1986a} Chapter IV, Remark 1.3;  Gunzburger et al. \cite{Gunzburger.M;Meir.A;Peterson.J.1991a} Proposition 7.1). Since we have established the boundedness and coercivity of the nonlinear variational form,  the convergence of  Picard iteration scheme proposed in this paper can be analyzed in the same way, and a comparable result holds. However we note that in the condition obtained in this way, the coupling number $S$ cannot be arbitrarily small, which seems to be contrary to the physical intuition. For example, in Gunzburger et al. \cite{Gunzburger.M;Meir.A;Peterson.J.1991a}, when we assume that the boundary data is zero, the criterion ((4.26) of \cite{Gunzburger.M;Meir.A;Peterson.J.1991a}) is reduced to
\begin{align}\label{picard-condition0}
\|\bm{f}\|_{-1}<\frac{S}{\sqrt{2}\gamma_{3}}\frac{ \left (  \min\left ( \frac{k_{1}}{SR_{e}}, \frac{k_{2}}{R_{m}^{2}}\right ) \right )^{2}}{\max\left ( \frac{1}{S}, \frac{\sqrt{2}}{R_{m}}  \right )}.
\end{align}
Here we have used the notation in \eqref{mhd}, with a correspondence to the original notation in \cite{Gunzburger.M;Meir.A;Peterson.J.1991a}: $N=S$, $M=\sqrt{SR_{e}}$, $F=S^{-1}\bm{f}$, where $F$ is the right hand side in  \cite{Gunzburger.M;Meir.A;Peterson.J.1991a}. Furthermore, here $\gamma_{3}$, $k_{1}$ and $k_{2}$ are positive constants in the Sobolev imbedding and the Poincar\'{e}'s inequality of velocity and magnetic fields. Now it is easy to see that in \eqref{picard-condition0}, $S$ cannot be arbitrarily small for fixed $R_{e}$, $R_{m}$ and $\bm{f}\neq \bm{0}$.
The condition (2.16) in Sch\"{o}tzau \cite{Schotzau.D.2004a} is similar.

 Therefore in this section, we use a different approach and directly prove the convergence of the Picard iterations by contraction. As a result, we will see that the small data condition (\eqref{small-data1}  below) will only contain $R_{e}$ and $R_{m}$, but not $S$.  The (discrete) energy law is crucial in the argument below as an a priori estimate.

A similar argument also holds on  the continuous level with minor modifications. We omit the subscript ``$h$'' in this section.
\begin{theorem}
The Picard iteration scheme (Algorithm \ref{alg:picardj}) converges when
\begin{align}\label{small-data1}
\|\bm{f}\|_{-1}\leq   \left ( 2C_{1}^{4}R_e^{4}+ 4C_{2}^{2}R_e^{2} R_m^{2} \right )^{-\frac{1}{2}},
\end{align}
where $C_{1}$ and $C_{2}$, depending only on the domain, are positive constants related to the Sobolev imbedding and regularity estimates of $H^{h}(\mathrm{div})$ functions given in \eqref{def:C1} and \eqref{def:C2}.
\end{theorem}
The above conditions are satisfied when the data $\|\bm{f}\|_{-1}$ is small relative to ${R_e^{-1}}$ and $R_{m}^{-1}$. 
\begin{proof}
By the standard theory of mixed methods, it suffices to consider the convergence in $\bm{X}_{h}^{0}:=\{ {\bm{\eta}}_{h}\in {\bm{X}}_{h}: {\bm{b}}({\bm{\eta}}_{h}, \bm{y}_{h})=0, ~\forall \bm{y}_{h}\in \bm{Y}_{h}   \}.$

The equation of the $n$-th step  can be written as
\begin{align}\label{errorn1}
L(\bm{u}^{n-1}; \bm{u}^{n}, \bm{v})+R_{e}^{-1}(\nabla \bm{u}^{n}, \nabla \bm{v})-S(\bm{j}^{n}\times \bm{B}^{n-1}, \bm{v})&=\langle \bm{f}, \bm{v}\rangle,\\\label{errorn2}
-(\bm{u}^{n}\times \bm{B}^{n-1}, \nabla_{h}\times \bm{C})+R_{m}^{-1}(\nabla_{h}\times \bm{B}^{n}, \nabla_{h}\times \bm{C})&=0.
\end{align}
The $(n-1)$-th step  is similarly written as
\begin{align}\label{errorn-11}
L(\bm{u}^{n-2}; \bm{u}^{n-1}, \bm{v})+R_{e}^{-1}(\nabla \bm{u}^{n-1}, \nabla \bm{v})-S(\bm{j}^{n-1}\times \bm{B}^{n-2}, \bm{v})&=\langle \bm{f}, \bm{v}\rangle,\\
-(\bm{u}^{n-1}\times \bm{B}^{n-2}, \nabla_{h}\times \bm{C})+R_{m}^{-1}(\nabla_{h}\times \bm{B}^{n-1}, \nabla_{h}\times \bm{C})&=0.\label{errorn-12}
\end{align}
Define the errors 
$$
e_{u}^{n}:=\bm{u}^{n}-\bm{u}^{n-1}, \quad e_{B}^{n}:=\bm{B}^{n}-\bm{B}^{n-1}, \quad e_{j}^{n}:=\bm{j}^{n}-\bm{j}^{n-1}.
$$
From the equation $\bm{j}^{n}=R_{m}^{-1}\nabla_{h}\times \bm{B}^{n}$, we have $e_{j}^{n}=R_{m}^{-1}\nabla_{h}\times e_{B}^{n}$.

Subtracting \eqref{errorn-11}-\eqref{errorn-12} from the $n$-step equation \eqref{errorn1}-\eqref{errorn2}, we get the error equation:
\begin{align*}
  \frac{1}{2}&\left ( (\bm{u}^{n-1}\cdot \nabla) e_{u}^{n}, \bm{v}
  \right )+\frac{1}{2}\left ( (e_{u}^{n-1}\cdot\nabla)\bm{u}^{n-1},
    \bm{v} \right )-\frac{1}{2}\left (
    (\bm{u}^{n-1}\cdot\nabla)\bm{v}, e_{u}^{n} \right )\\&
  -\frac{1}{2}\left ( (e_{u}^{n-1}\cdot\nabla)\bm{v}, \bm{u}^{n-1} \right )+\frac{1}{R_{e}}(\nabla e_{u}^{n}, \nabla \bm{v})+S\left (\bm{B}^{n-1}\times e_{j}^{n}, \bm{v} \right )+S\left (e_{B}^{n-1}\times \bm{j}^{n-1}, \bm{v}\right )=0,\\
  -&( e_{u}^{n}\times \bm{B}^{n-1}, \nabla_{h}\times \bm{C}
  )-(\bm{u}^{n-1}\times e_{B}^{n-1}, \nabla_{h}\times
  \bm{C})+R_{m}^{-1}(\nabla_{h}\times e_{B}^{n}, \nabla_{h}\times
  \bm{C})=0.
\end{align*}
Multiplying the second equation by $SR_{m}^{-1}$,  adding the above two equations and taking $\bm{v}=e_{u}^{n}$,
$\bm{C}=e_{B}^{n}$ yield
\begin{align}\label{eqn:error1}
&\quad \frac{1}{2}\left( (e_{u}^{n-1}\cdot\nabla)\bm{u}^{n-1}, e_{u}^{n}
  \right )-\frac{1}{2}\left ( (e_{u}^{n-1}\cdot \nabla)e_{u}^{n} ,
    \bm{u}^{n-1} \right)+R_{e}^{-1}(\nabla e_{u}^{n}, \nabla
  e_{u}^{n})+S(e_{B}^{n-1}\times \bm{j}^{n-1},
  e_{u}^{n})
  \\\nonumber &
   -SR_{m}^{-1}\left (\bm{u}^{n-1}\times e_{B}^{n-1},
  \nabla_{h}\times e_{B}^{n}\right )  +SR_{m}^{-2}(\nabla_{h}\times e_{B}^{n},
  \nabla_{h}\times e_{B}^{n})=0.
\end{align}
From the energy estimates \eqref{energy-picard}, we know 
$$
\|\nabla\bm{u}^{n}\|\leq R_{e}\|\bm{f}\|_{-1},
$$
and 
$$
\|\bm{j}^{n}\|\leq \left (\frac{R_{e}}{2S}\right )^{\frac{1}{2}}\|\bm{f}\|_{-1},
$$
which hold for all $n>0$.

  Then we have the estimates for the nonlinear terms:
\begin{align*}
\left |\frac{1}{2} \left(   (e_{u}^{n-1}\cdot\nabla)\bm{u}^{n-1}, e_{u}^{n}  \right ) \right |&\leq \frac{1}{2}\|e_{u}^{n-1}\|_{0,3}\|\nabla \bm{u}^{n-1}\|\|e_{u}^{n}\|_{0,6}\\&
\leq \frac{1}{2}C_{1}^{2}R_{e}\|\bm{f}\|_{-1}  \|\nabla e_{u}^{n-1}\| \|\nabla e_{u}^{n}\|\\&
\leq \frac{1}{8R_{e}}\|\nabla e_{u}^{n-1}\|^{2}+\frac{1}{2}C_{1}^{4}R_{e}^{3}\|\bm{f}\|_{-1}^{2}\|\nabla e_{u}^{n}\|^{2},
\end{align*}
\begin{align*}
\left |\frac{1}{2} \left(   (e_{u}^{n-1}\cdot\nabla)e_{u}^{n}, \bm{u}^{n-1}  \right ) \right |&\leq \frac{1}{2}\|\bm{u}^{n-1}\|_{0,6}\|\nabla e_{u}^{n}\|\|e_{u}^{n-1}\|_{0,3}\\&
\leq \frac{1}{2}C_{1}^{2}R_{e}\|\bm{f}\|_{-1}  \|\nabla e_{u}^{n-1}\| \|\nabla e_{u}^{n}\|\\&
\leq \frac{1}{8R_{e}}\|\nabla e_{u}^{n-1}\|^{2}+\frac{1}{2}C_{1}^{4}R_{e}^{3}\|\bm{f}\|_{-1}^{2}\|\nabla e_{u}^{n}\|^{2},
\end{align*}
\begin{align*}
\left | S(e_{B}^{n-1}\times  \bm{j}^{n-1}, e_{u}^{n} ) \right |  &\leq SC_{2}\|\nabla_{h}\times e_{B}^{n-1}\|\|\bm{j}^{n-1}\|\|\nabla e_{u}^{n}\|\\&
\leq SC_{2}R_{m}\|\bm{j}^{n-1}\|\|e_{j}^{n-1}\|\|\nabla e_{u}^{n}\|\\&
\leq SC_{2}R_{m}\left ( \frac{R_{e}}{2S} \right )^{\frac{1}{2}}\|\bm{f}\|_{-1}\|e_{j}^{n-1}\|\|\nabla e_{u}^{n}\|\\&
\leq \frac{1}{8}S\|e_{j}^{n-1}\|^{2}+2R_{e}C_{2}^{2}R_{m}^{2}\|\bm{f}\|_{-1}^{2}\|\nabla e_{u}^{n}\|^{2},
\end{align*}
and
\begin{align*}
\left | SR_{m}^{-1}(\bm{u}^{n-1}\times e_{B}^{n-1}, \nabla_{h}\times e_{B}^{n}    )  \right |&\leq SR_{m}^{-1}C_{2}\|\nabla_{h}\times e_{B}^{n-1}\|\|\nabla\bm{u}^{n-1}\|\|\nabla_{h}\times e_{B}^{n}\|\\&
\leq SC_{2}R_{e}R_{m}\|\bm{f}\|_{-1}\|e_{j}^{n-1}\|\|e_{j}^{n}\|\\&
\leq \frac{1}{8}S\|e_{j}^{n-1}\|^{2}+2SR_{m}^{2}C_{2}^{2}R_{e}^{2}\|\bm{f}\|_{-1}^{2}\|e_{j}^{n}\|^{2}.
\end{align*}

Combining the above estimates with \eqref{eqn:error1}, we have
\begin{align*}
&\left(\frac{1}{R_{e}}-C_{1}^{4}R_{e}^{3}\|\bm{f}\|_{-1}^{2}-2R_{e} C_{2}^{2}R_{m}^{2} \|\bm{f}\|_{-1}^{2}   \right)\|\nabla e_{u}^{n}\|^{2}\\&
+\left (  {S}-2R_{m}^{2}SC_{2}^{2}R_{e}^{2}\|\bm{f}\|_{-1}^{2}      \right)\|e_{j}^{n}\|^{2}\leq \frac{1}{4R_{e}}\|\nabla e_{u}^{n-1}\|^{2}+\frac{1}{4}S\|e_{j}^{n-1}\|^{2}.
\end{align*}
We define the energy functional to be
$$
\mathcal{E}^{n}:=\frac{1}{2R_{e}}\|\nabla e_{u}^{n}\|^{2}+\frac{1}{2}S\|e_{j}^{n}\|^{2}.
$$
Therefore when 
$$
\frac{1}{2R_{e}}\geq C_{1}^{4}R_{e}^{3}\|\bm{f}\|_{-1}^{2}+2C_{2}^{2}R_{e} R_{m}^{2} \|\bm{f}\|_{-1}^{2},
$$
and
$$
\frac{1}{2}S\geq 2R_{m}^{2} S C_{2}^{2}R_{e}^{2}\|\bm{f}\|_{-1}^{2},
$$
i.e. when \eqref{small-data1} holds,
we have 
$$
\mathcal{E}^{n}\leq \frac{1}{2}\mathcal{E}^{n-1}.
$$
This implies that $(\bm{u}^{n}, \bm{B}^{n})$ converges to some $(\bm{u}, \bm{B})$ in the norm defined by 
$$
R_{e}^{-1}\|\nabla \bm{u}^{n}\|^{2}+SR_{m}^{-2}\|\nabla_{h}\times \bm{B}^{n}\|^{2}.
$$
Combined with the continuity of the trilinear form, we can take the limit  and $(\bm{u}, \bm{B})$ is a solution of the nonlinear Problem \ref{prob:nonlinear}.

From the inf-sup condition of the velocity-pressure pair, we also have the convergence of the pressure $p^{n}$.
\end{proof}


\subsection{Convergence of the finite element method}

We prove the convergence of the nonlinear finite element scheme.  In the discussions below, we  deal with the reduced form of the finite element scheme with variables $(\bm{u}_{h},  \bm{B}_{h}, p_{h}, r_{h})$ (Problem \ref{prob:nonlinear-equi}), then recover $\bm{j}_{h}$ and $\bm{\sigma}_{h}$ from these variables. 

As a routine approach for mixed methods, the proof below consists of several steps. We first subtract the finite element solution from the true solution to obtain certain orthogonality (\eqref{orthogonality}). Then we insert an arbitrary discrete function to the orthogonality equation to get \eqref{eqn:formal-system}. Combining with triangular inequalities, numerical errors can be bounded by the difference of the true solution and the discrete functions inserted above. Such an estimate is usually called quasi-orthogonality (Theorem \ref{thm:quasi-orthogonality}).   Then the final estimate (\eqref{error}) follows from polynomial approximation results.

The analysis below also contains some new features compared with conventional error estimates for mixed methods. The finite element scheme involves the discrete adjoint operator $\nabla_{h}\times $, which can only be defined for finite element functions. Therefore it is no wonder that the consistency error $\|\nabla\times \bm{B}-\nabla_{h}\times \bm{B}_{I}\|$ will come into our analysis. Moreover, in the analysis for the nonlinear problem, we will frequently use the key technical results established in Section \S \ref{sec:estimates} to provide the {\it a priori} estimate for both numerical and true solutions. Combining these key estimates and small source assumptions, which are common for nonlinear problems, we obtain the desired results. 

We begin detailed analysis by discovering the  orthogonality.
Subtracting the true solution of \eqref{curl-formulation} from the variational form \eqref{equi-fem}, we have  for any $(\bm{v}_{h}, \bm{C}_{h})\in \tilde{\bm{X}}_{h}$, $(q_{h}, s_{h})\in \bm{Y}_{h}$,
\begin{align}\label{orthogonality}\footnotesize 
\begin{cases}
&\frac{1}{2} [\left ( \left ( \bm{u}_{h}-\bm{u}\right )\cdot \nabla \bm{u}_{h}, \bm{v}_{h}\right )+\left ( \left ( \bm{u}\cdot \nabla \right )\left (\bm{u}_{h}-\bm{u}\right), \bm{v}_{h}\right ) - \left ( (\bm{u}_{h}\cdot\nabla)\bm{v}_{h}, \bm{u}_{h}- \bm{u} \right )\\
&\quad\quad  -\left (\left ( \bm{u}_{h}-\bm{u}\right )\cdot\nabla\bm{v}_{h}, \bm{u} \right ) ]+R_{e}^{-1}\left ( \nabla(\bm{u}_{h}-\bm{u}), \nabla\bm{v}_{h}\right )-\left ( p_{h}-p, \nabla\cdot \bm{v}_{h}\right )\\&\quad\quad\quad\quad\quad\quad\quad -SR_{m}^{-1}\left( ( \nabla_{h}\times \bm{B}_{h})\times \bm{B}_{h}, \bm{v}_{h} \right )
+SR_{m}^{-1}\left ( \left ( \nabla\times \bm{B}\right )\times \bm{B}, \bm{v}_{h}\right )= 0,\\ 
&-SR_{m}^{-1} \left ( \bm{u}_{h}\times \bm{B}_{h}, \nabla_{h}\times \bm{C}_{h}\right )+SR_{m}^{-1}\left (\nabla\times (\bm{u}\times \bm{B}),  \bm{C}_{h}\right )\\ 
&\quad\quad\quad\quad\quad\quad+SR_{m}^{-2}\left (\nabla\times\nabla_{h}\times \bm{B}_{h}-\nabla\times \nabla\times \bm{B},  \bm{C}_{h} \right )+(r_{h}-r, \nabla\cdot\bm{C}_{h})=0,\\  &\left  (\nabla\cdot (\bm{u}_{h}-\bm{u}), q_{h}\right )=0, \\
& \left ( \nabla\cdot (\bm{B}_{h}-\bm{B}), s_{h}\right )=0.
\end{cases}
\end{align}

We assume that $(\bm{u}_{I}, \bm{B}_{I})\in \tilde{\bm{X}}_{h}$ and $(p_{I}, r_{I})\in \bm{Y}_{h}$ are arbitrary discrete functions. Inserting $(\bm{u}_{I}, \bm{B}_{I})$, $(p_{I}, r_{I})$ into \eqref{orthogonality}, we get: for any $(\bm{v}_{h}, \bm{C}_{h})\in \tilde{\bm{X}}_{h}$, $(q_{h}, s_{h})\in \bm{Y}_{h}$, 
\begin{align*}\footnotesize
\begin{cases}
&\frac{1}{2} [\left ( \left ( \bm{u}_{h}-\bm{u}_{I}\right )\cdot \nabla \bm{u}_{h}, \bm{v}_{h}\right )+\left ( \left ( \bm{u}\cdot \nabla \right )\left (\bm{u}_{h}-\bm{u}_{I}\right), \bm{v}_{h}\right ) - \left ( (\bm{u}_{h}\cdot\nabla)\bm{v}_{h}, \bm{u}_{h}- \bm{u}_{I} \right ) \\
& \quad\quad -\left (\left ( \bm{u}_{h}-\bm{u}_{I}\right )\cdot\nabla\bm{v}_{h}, \bm{u} \right ) ]+R_{e}^{-1}\left ( \nabla(\bm{u}_{h}-\bm{u}_{I}), \nabla\bm{v}_{h}\right )-\left ( p_{h}-p_{I}, \nabla\cdot \bm{v}_{h}\right )\\
&\quad\quad -SR_{m}^{-1}\left(  \nabla_{h}\times (\bm{B}_{h}-\bm{B}_{I})\times \bm{B}_{h}, \bm{v}_{h} \right )-SR_{m}^{-1}\left ( \left ( \nabla\times \bm{B}\right )\times \left (\bm{B}_{h}-\bm{B}_{I} \right ), \bm{v}_{h}\right )\\&
\quad\quad\quad\quad= \frac{1}{2} [ (  ( \bm{u}-\bm{u}_{I})\cdot \nabla  )\bm{u}_{h}, \bm{v}_{h} )+(  ( \bm{u}\cdot \nabla  ) (\bm{u}-\bm{u}_{I}), \bm{v}_{h})  -  ( (\bm{u}_{h}\cdot\nabla)\bm{v}_{h}, \bm{u}- \bm{u}_{I} ) \\
&\quad\quad\quad\quad\quad\quad - ( ( \bm{u}-\bm{u}_{I} )\cdot\nabla\bm{v}_{h}, \bm{u}  ) ]+R_{e}^{-1}\left ( \nabla(\bm{u}-\bm{u}_{I}), \nabla\bm{v}_{h}\right )-\left ( p-p_{I}, \nabla\cdot \bm{v}_{h}\right )\\
&\quad\quad\quad\quad\quad\quad +SR_{m}^{-1}\left ( (\nabla_{h}\times \bm{B}_{I}-\nabla\times \bm{B})\times \bm{B}_{h}, \bm{v}_{h}\right )+SR_{m}^{-1}\left ( (\nabla\times \bm{B})\times (\bm{B}_{I}-\bm{B}), \bm{v}_{h}\right ),\\
&-SR_{m}^{-1} \left ( (\bm{u}_{h}-\bm{u}_{I})\times \bm{B}_{h}, \nabla_{h}\times \bm{C}_{h}\right )-SR_{m}^{-1}\left (\bm{u}\times (\bm{B}_{h}-\bm{B}_{I}), \nabla_{h}\times \bm{C}_{h}\right )\\&
\quad+SR_{m}^{-2}\left (\nabla_{h}\times (\bm{B}_{h}-\bm{B}_{I}), \nabla_{h}\times \bm{C}_{h} \right )+(r_{h}-r_{I}, \nabla\cdot\bm{C}_{h})
\\&\quad\quad=-SR_{m}^{-1} \left ( (\bm{u}-\bm{u}_{I})\times \bm{B}_{h},  \nabla_{h}\times \bm{C}_{h}\right )
+SR_{m}^{-1}\left (\bm{u}\times (\bm{B}_{I}-\bm{B}), \nabla_{h}\times \bm{C}_{h} \right )
\\&+SR_{m}^{-2}\left (\nabla\times (\nabla\times \bm{B}- \nabla_{h}\times \bm{B}_{I}),  \bm{C}_{h} \right )
+(r-r_{I}, \nabla\cdot\bm{C}_{h})-SR_{m}^{-1}(\nabla\times (\mathrm{id}-\mathbb{P})(\bm{u}\times \bm{B}), \bm{C}_{h}),\\
&\left  (\nabla\cdot (\bm{u}_{h}-\bm{u}_{I}), q_{h}\right )=\left ( \nabla\cdot (\bm{u}-\bm{u}_{I}), q_{h}\right ), \\
& \left ( \nabla\cdot (\bm{B}_{h}-\bm{B}_{I}), s_{h}\right )=\left ( \nabla\cdot (\bm{B}-\bm{B}_{I}), s_{h}\right ).
\end{cases}
\end{align*}
Here we have used the identity 
$$
(\nabla\times (\bm{u}\times \bm{B}), \bm{C}_{h})=(\nabla\times (\mathrm{id}-\mathbb{P})(\bm{u}\times \bm{B}), \bm{C}_{h})+(\bm{u}\times \bm{B}, \nabla_{h}\times \bm{C}_{h}).
$$

Adding the first two equations together, we can write the above system  as 
\begin{align}\label{eqn:formal-system}\footnotesize
\begin{cases}
&R_{e}^{-1}\left ( \nabla(\bm{u}_{h}-\bm{u}_{I}), \nabla\bm{v}_{h}\right )-\left ( p_{h}-p_{I}, \nabla\cdot \bm{v}_{h}\right )-SR_{m}^{-1}\left(  \nabla_{h}\times (\bm{B}_{h}-\bm{B}_{I})\times \bm{B}_{h}, \bm{v}_{h} \right )\\
&\quad-SR_{m}^{-1} \left ( (\bm{u}_{h}-\bm{u}_{I})\times \bm{B}_{h}, \nabla_{h}\times \bm{C}_{h}\right )+SR_{m}^{-2}\left (\nabla_{h}\times (\bm{B}_{h}-\bm{B}_{I}), \nabla_{h}\times \bm{C}_{h} \right )\\
&\quad+(r_{h}-r_{I}, \nabla\cdot\bm{C}_{h})+G(\bm{u}_{h}, \bm{B}_{h}, \bm{u}, \bm{B}; \bm{u}_{h}-\bm{u}_{I}, \bm{B}_{h}-\bm{B}_{I}; \bm{v}_{h}, \bm{C}_{h})\\&
=H(\bm{u}_{h}, \bm{B}_{h}, \bm{u}, \bm{B}; \bm{u}-\bm{u}_{I}, \bm{B}-\bm{B}_{I}, p-p_{I}, r-r_{I}; \bm{v}_{h}, \bm{C}_{h})+SR_{m}^{-1}\left ( (\nabla_{h}\times \bm{B}_{I}-\nabla\times \bm{B})\times \bm{B}_{h}, \bm{v}_{h}\right )\\
&\quad\quad\quad+SR_{m}^{-2}\left (\nabla\times (\nabla\times \bm{B}- \nabla_{h}\times \bm{B}_{I}),  \bm{C}_{h} \right )-SR_{m}^{-1}(\nabla\times (\mathrm{id}-\mathbb{P})(\bm{u}\times \bm{B}), \bm{C}_{h}),\\
&\left  (\nabla\cdot (\bm{u}_{h}-\bm{u}_{I}), q_{h}\right )=\left ( \nabla\cdot (\bm{u}-\bm{u}_{I}), q_{h}\right ), \\ 
& \left ( \nabla\cdot (\bm{B}_{h}-\bm{B}_{I}), s_{h}\right )=\left ( \nabla\cdot (\bm{B}-\bm{B}_{I}), s_{h}\right ),
\end{cases}
\end{align}
where 
\begin{align*}
G(\bm{u}_{h},& \bm{B}_{h}, \bm{u}, \bm{B}; \bm{u}_{h}-\bm{u}_{I}, \bm{B}_{h}-\bm{B}_{I}; \bm{v}_{h}, \bm{C}_{h})=\frac{1}{2} [\left ( \left ( \bm{u}_{h}-\bm{u}_{I}\right )\cdot \nabla \bm{u}_{h}, \bm{v}_{h}\right )\\&+\left ( \left ( \bm{u}\cdot \nabla \right )\left (\bm{u}_{h}-\bm{u}_{I}\right), \bm{v}_{h}\right ) - \left ( (\bm{u}_{h}\cdot\nabla)\bm{v}_{h}, \bm{u}_{h}- \bm{u}_{I} \right )-\left (\left ( \bm{u}_{h}-\bm{u}_{I}\right )\cdot\nabla\bm{v}_{h}, \bm{u} \right ) ] \\
& \quad -SR_{m}^{-1}\left ( \left ( \nabla\times \bm{B}\right )\times \left (\bm{B}_{h}-\bm{B}_{I} \right ), \bm{v}_{h}\right )-SR_{m}^{-1}\left (\bm{u}\times (\bm{B}_{h}-\bm{B}_{I}), \nabla_{h}\times \bm{C}_{h}\right ),
\end{align*}
and
\begin{align*}
H(\bm{u}_{h},& \bm{B}_{h}, \bm{u}, \bm{B}; \bm{u}-\bm{u}_{I}, \bm{B}-\bm{B}_{I}, p-p_{I}, r-r_{I}; \bm{v}_{h}, \bm{C}_{h})=\frac{1}{2} [ (  ( \bm{u}-\bm{u}_{I})\cdot \nabla  )\bm{u}_{h}, \bm{v}_{h} )\\&
+(  ( \bm{u}\cdot \nabla  ) (\bm{u}-\bm{u}_{I}), \bm{v}_{h})  -  ( (\bm{u}_{h}\cdot\nabla)\bm{v}_{h}, \bm{u}- \bm{u}_{I} )  - ( ( \bm{u}-\bm{u}_{I} )\cdot\nabla\bm{v}_{h}, \bm{u}  ) ]\\
&\quad+SR_{m}^{-1}\left ( (\nabla\times \bm{B})\times (\bm{B}_{I}-\bm{B}), \bm{v}_{h}\right )-SR_{m}^{-1} \left ( (\bm{u}-\bm{u}_{I})\times \bm{B}_{h},  \nabla_{h}\times \bm{C}_{h}\right )\\&\quad\quad
+SR_{m}^{-1}\left (\bm{u}\times (\bm{B}_{I}-\bm{B}), \nabla_{h}\times \bm{C}_{h} \right )+R_{e}^{-1}(\nabla(\bm{u}-\bm{u}_{I}), \nabla\bm{v}_{h})\\&\quad\quad\quad\quad\quad-(p-p_{I}, \nabla\cdot\bm{v}_{h})+(r-r_{I}, \nabla\cdot\bm{C}_{h}).
\end{align*}

Thanks to the energy law and the key estimate for the regularity of $\bm{B}_{h}$ (Theorem \ref{thm:L3}), norms $\|\bm{u}_{h}\|_{1}, \|\bm{B}_{h}\|_{d}, \|\bm{u}\|_{1}, \|\bm{B}\|_{0, 3}$ can be bounded by the source $\|\bm{f}\|_{-1}$.  Therefore $H$ and $G$ are bounded bilinear forms with coefficients which can be controlled  by $\|\bm{f}\|_{-1}$. 
Specifically, we have the boundedness
\begin{align*}
 |G(\bm{u}_{h}, \bm{B}_{h}, \bm{u},& \bm{B}; \bm{u}_{h}-\bm{u}_{I}, \bm{B}_{h}-\bm{B}_{I}; \bm{v}_{h}, \bm{C}_{h}) |\\&
\leq \Gamma_{1} \left ( \|\nabla(\bm{u}-\bm{u}_{I})\|^{2}+\|\nabla_{h}\times \left (\bm{B}_{h}-\bm{B}_{I}\right )\|^{2}\right )^{1/2}\left ( \|\nabla\bm{v}_{h}\|^{2}+\|\nabla_{h}\times \bm{C}_{h}\|^{2}\right )^{1/2},
\end{align*}
and
\begin{align}\label{eqn:H}
 |H(\bm{u}_{h}, \bm{B}_{h}, \bm{u}, &\bm{B}; \bm{u}-\bm{u}_{I}, \bm{B}-\bm{B}_{I}, p-p_{I}, r-r_{I}; \bm{v}_{h}, \bm{C}_{h}) |
\\\nonumber&  \leq \Gamma_{2}\left ( \|\bm{u}-\bm{u}_{I}\|_{1}^{2} +\|\bm{B}-\bm{B}_{I}\|^{2}+\|p-p_{I}\|^{2}+\|r-r_{I}\|^{2} \right )^{1/2} \|(\bm{v}_{h}, \bm{C}_{h})\|_{\tilde{\bm{X}}},
\end{align}
where
$$
\Gamma_{1}=C_{1}^{2}\left (\|\nabla\bm{u}_{h}\|+\|\nabla\bm{u}\| \right )+SR_{m}^{-1}C_1C_{2}\left (\|\nabla\times \bm{B}\|+\|\nabla\bm{u}\|\right ),
$$
and
\begin{align*}
\Gamma_{2}=C_{1}^{2}\left (\|\nabla\bm{u}_{h}\|+\|\nabla\bm{u}\| \right )&+SR_{m}^{-1}C_{1}C_{2}\|\nabla_{h}\times \bm{B}_{h}\|
\\&+SR_{m}^{-1}C_{1}\|\nabla\times \bm{B}\|_{0, 3}+SR_{m}^{-1}\|\bm{u}\|_{0, \infty}+2+R_{e}^{-1}.
\end{align*}
From the energy law, we have
$$
\|\nabla\bm{u}_{h}\|\leq R_{e}\|\bm{f}\|_{-1}, \quad \|\nabla\bm{u}\|\leq R_{e}\|\bm{f}\|_{-1},
$$
and
$$
\|\nabla\times \bm{B}\|\leq \sqrt{\frac{R_{e}R_{m}^{2}}{2S}}\|\bm{f}\|_{-1}, \quad \|\nabla_{h}\times \bm{B}_{h}\|\leq \sqrt{\frac{R_{e}R_{m}^{2}}{2S}}\|\bm{f}\|_{-1}.
$$
Therefore 
$$
\Gamma_{1}\leq \left ( 2C_{1}^{2}R_{e}+\sqrt{2}/2 C_{1}C_{2}\sqrt{R_{e}S}+SR_{m}^{-1}C_{1}C_{2}R_{e}\right )\|\bm{f}\|_{-1}.
$$

There are three remaining terms on the right hand side of \eqref{eqn:formal-system}, i.e. 
$$
I_{1}:=SR_{m}^{-1}\left ( (\nabla_{h}\times \bm{B}_{I}-\nabla\times \bm{B})\times \bm{B}_{h}, \bm{v}_{h}\right ),
$$
$$
I_{2}:=SR_{m}^{-2}\left (\nabla\times (\nabla\times \bm{B}- \nabla_{h}\times \bm{B}_{I}),  \bm{C}_{h} \right ),
$$
and
$$
I_{3}:=SR_{m}^{-1}(\nabla\times (\mathrm{id}-\mathbb{P})(\bm{u}\times \bm{B}), \bm{C}_{h}).
$$
Next, we estimate these three terms. The following lemma gives an estimate for the consistency term $\nabla\times \bm{B}-\nabla_{h}\times \bm{B}_{I}$. An analogous 2D version can be found in~\cite{Chen.L;Wang.M;Zhong.L.2014a}.
\begin{lemma}\label{consistency-curl}
We have the  estimate for the consistency of the  discrete adjoint operator 
$$
\|\nabla\times \bm{B}-\nabla_{h}\times \bm{B}_{I}\|\lesssim \|(\mathrm{id}-\mathbb{P})\nabla\times \bm{B}\|+h^{-1}\|\bm{B}-\bm{B}_{I}\|.
$$
\end{lemma}
\begin{proof}
We recall that $\mathbb{P}$ denotes the $L^{2}$ projection to $H^{h}_{0}(\mathrm{curl}, \Omega)$. We have
\begin{align*}
\|\nabla\times \bm{B}-\nabla_{h}\times \bm{B}_{I}\|&=\|\nabla\times \bm{B}-\mathbb{P}(\nabla\times \bm{B})+\mathbb{P}(\nabla\times \bm{B})-\nabla_{h}\times \bm{B}_{I}\|\\
&\leq \|(\mathrm{id}-\mathbb{P})\nabla\times \bm{B}\|+\|\mathbb{P}(\nabla\times \bm{B})-\nabla_{h}\times \bm{B}_{I}\|.
\end{align*}
For the second term, we use a dual estimate: for any $\bm{\phi}_{h}\in H^{h}_{0}(\mathrm{curl}, \Omega)$, 
\begin{align*}
(\mathbb{P}(\nabla\times \bm{B})-\nabla_{h}\times \bm{B}_{I}, \bm{\phi}_{h})&=(\nabla\times \bm{B}-\nabla_{h}\times \bm{B}_{I}, \bm{\phi}_{h})\\
&=(\bm{B}-\bm{B}_{I}, \nabla\times \bm{\phi}_{h})\\
&\leq \|\bm{B}-\bm{B}_{I}\|\|\nabla\times \bm{\phi}_{h}\|\\
&\lesssim h^{-1}\|\bm{B}-\bm{B}_{I}\|\| \bm{\phi}_{h}\|.
\end{align*}

This implies that $\|\mathbb{P}(\nabla\times \bm{B})-\nabla_{h}\times \bm{B}_{I}\|\lesssim h^{-1}\|\bm{B}-\bm{B}_{I}\|$ and the desired result follows.
\end{proof}

Lemma \ref{consistency-curl} implies the estimate for $I_{1}$:
$$
\left| I_{1}\right|\lesssim \left ( \|(\mathrm{id}-\mathbb{P})\nabla\times \bm{B}\|+ h^{-1}\|\bm{B}-\bm{B}_{I}\|\right )\|\bm{B}_{h}\|_{d}\|\bm{v}_{h}\|_{1}.
$$

We turn to the estimate for $I_{2}$: 
\begin{align*}
&\quad\left ( \nabla\times \nabla\times \bm{B}, \bm{C}_{h}\right )-(\nabla_{h}\times \bm{B}_{I}, \nabla_{h}\times \bm{C}_{h})
\\&=\left ( \nabla\times (\mathbb{P}+\mathrm{id}-\mathbb{P})\nabla\times \bm{B}, \bm{C}_{h}\right )-(\nabla_{h}\times \bm{B}_{I}, \nabla_{h}\times \bm{C}_{h})\\&
=\left ( \nabla\times \bm{B}-\nabla_{h}\times \bm{B}_{I}, \nabla_{h}\times \bm{C}_{h}\right )+(\nabla\times (\mathrm{id}-\mathbb{P})\nabla\times \bm{B}, \bm{C}_{h}).
\end{align*}
Using Lemma \ref{consistency-curl} again, we get
$$
\left |I_{2}\right |\lesssim  \left (  h^{-1}\|\bm{B}-\bm{B}_{I}\|+\|\left ( \mathrm{id}-\mathbb{P}\right )\nabla\times \bm{B}\|+\|\nabla\times(\mathrm{id}-\mathbb{P}) \nabla\times \bm{B}\|\right )\|\bm{C}_{h}\|_{d}.
$$

Moreover, we have a straightforward estimate for $I_{3}$:
$$
\left | I_{3} \right |\leq \|\nabla\times \left ( \mathrm{id}-\mathbb{P}\right )(\bm{u}\times \bm{B})\|\|\bm{C}_{h}\|.
$$

For any $\bm{B}\in H(\mathrm{div, \Omega})$, we define
$$
\|\bm{B}\|_{\mathrm{div}}^{2}:=\|\bm{B}\|^{2}+\|\nabla\cdot\bm{B}\|^{2}.
$$
\begin{lemma}\label{lem:rerm}
Assume  that $\|\bm{f}\|_{-1}$ is sufficiently small.
There exists $\mathcal{C}>0$ depending on $\Omega$,  $\|\bm{u}\|_{0, \infty}$ and $\|\bm{B}\|_{0, 3}$, such that for any $(\bm{u}_{I}, \bm{B}_{I})\in \tilde{\bm{X}}_{h}$, $(p_{I}, r_{I})\in \bm{Y}_{h}$, 
{\footnotesize\begin{align*}\footnotesize
\|&\bm{u}_{h}-\bm{u}_{I}\|_{1}^{2}+\|\bm{B}_{h}-\bm{B}_{I}\|_{d}^{2}+\|p_{h}-p_{I}\|^{2}+\|r_{h}-r_{I}\|^{2}\leq \mathcal{C} ( \|\bm{u}-\bm{u}_{I}\|_{1}^{2}
+\|\bm{B}-\bm{B}_{I}\|_{\mathrm{div}}^{2}+\|p-p_{I}\|^{2}\\&+\|r-r_{I}\|^{2}+ h^{-2}\|\bm{B}-\bm{B}_{I}\|^{2}+\| (\mathrm{id}-\mathbb{P})\nabla\times \bm{B}\|^{2}+\|\nabla\times (\mathrm{id}-\mathbb{P})\nabla\times \bm{B}\|^{2}+\|\nabla\times (\mathrm{id}-\mathbb{P})(\bm{u}\times \bm{B})\|^{2}  ).
\end{align*}}
\end{lemma}
\begin{proof}
Given $(\bm{u}, \bm{B}, p, r)$ and $(\bm{u}_{I}, \bm{B}_{I}, p_{I}, r_{I})$, the system \eqref{eqn:formal-system} can be seen as equations for $(\bm{u}_{h}-\bm{u}_{I}, \bm{B}_{h}-\bm{B}_{I}, p_{h}-p_{I}, r_{h}-r_{I})$. Compared with the nonlinear discrete system which we have analyzed, i.e. Problem \ref{prob:nonlinear-equi}, a new term $G$ appears on the left hand side and the fluid convection term has been absorbed into $G$.  

We assume that
\begin{align}\label{condition-f}
\|\bm{f}\|_{-1}\leq \mathrm{min}\left\{1/2R_{e}^{-1}, 1/2SR_{m}^{-2}\right \}\left (2C_{1}^{2}R_{e}+\sqrt{2}/2 C_{1}C_{2}\sqrt{R_{e}S}+SR_{m}^{-1}C_{1}C_{2}R_{e} \right )^{-1}.
\end{align}
A direct consequence  \eqref{condition-f} is $R_{e}\leq 1/2\Gamma_{1}^{-1}$ and $R_{m}\leq \left (1/2S\Gamma_{1}^{-1}\right )^{1/2}$.
Then
we have
{\footnotesize
$$
\left |G(\bm{u}_{h}, \bm{B}_{h}, \bm{u}, \bm{B}; \bm{v}_{h}, \bm{C}_{h}; \bm{v}_{h}, \bm{C}_{h})\right |\leq \frac{1}{2}R_{e}^{-1}\|\nabla \bm{v}_{h}\|^{2}+\frac{1}{2}SR_{m}^{-2}\|\nabla_{h}\times\bm{C}_{h}\|^{2},~ \forall \left (\bm{v}_{h}, \bm{C}_{h}\right )\in \tilde{\bm{X}}_{h},
$$}
then the left hand side {\footnotesize
\begin{align*}
\mathcal{A}&(\bm{w}_{h}, \bm{G}_{h}; \bm{v}_{h}, \bm{C}_{h}):=R_{e}^{-1}\left ( \nabla \bm{w}_{h}, \nabla\bm{v}_{h}\right )-SR_{m}^{-1}\left( \left ( \nabla_{h}\times \bm{G}_{h}\right )\times \bm{B}_{h}, \bm{v}_{h} \right )\\
&-SR_{m}^{-1} \left ( \bm{w}_{h}\times \bm{B}_{h}, \nabla_{h}\times \bm{C}_{h}\right )+SR_{m}^{-2}\left (\nabla_{h}\times \bm{G}_{h}, \nabla_{h}\times \bm{C}_{h} \right )+G(\bm{u}_{h}, \bm{B}_{h}, \bm{u}, \bm{B}; \bm{w}_{h}, \bm{G}_{h}; \bm{v}_{h}, \bm{C}_{h})
\end{align*}}
defines a bounded coercive bilinear form for fixed $\bm{u}_{h}$, $\bm{B}_{h}$, $\bm{u}$ and $\bm{B}$. The boundedness constant depends on $\|\bm{u}_{h}\|_{1}, \|\bm{u}\|_{1}$, $\|\nabla_{h}\times \bm{B}_{h}\|$ and $\|\nabla\times \bm{B}\|_{0, 3}$, which further depend on $\|\bm{f}\|_{-1}$.

For the right hand sides, $H(\bm{u}_{h}, \bm{B}_{h}, \bm{u}, \bm{B}; \bm{u}-\bm{u}_{I}, \bm{B}-\bm{B}_{I}, p-p_{I}, r-r_{I}; \cdot)$ can be regarded as a bounded linear functional on $\tilde{\bm{X}}_{h}$ for fixed $\bm{u}_{h}, \bm{B}_{h}, \bm{u}, \bm{B}, \bm{u}_{I}, \bm{B}_{I}$, and the dual norm  can be bounded by 
$$
\Gamma_{2}\left ( \|\bm{u}-\bm{u}_{I}\|_{1}^{2}+\|\bm{B}-\bm{B}_{I}\|_{}^{2}+\|p-p_{I}\|^{2}+\|r-r_{I}\|^{2} \right )^{1/2},
$$
due to \eqref{eqn:H}.
Moreover, given $\bm{u}_{h}-\bm{u}_{I}$ and $\bm{B}_{h}-\bm{B}_{I}$,   $\left  (\nabla\cdot (\bm{u}_{h}-\bm{u}_{I}), q_{h}\right )$ and $\left  (\nabla\cdot (\bm{B}_{h}-\bm{B}_{I}), s_{h}\right )$ are bounded linear functionals on $Q_{h}$ and $L^{2}_{h}$ respectively, with dual norms $\|\nabla\cdot (\bm{u}_{h}-\bm{u}_{I})\|$ and $\|\nabla\cdot (\bm{B}_{h}-\bm{B}_{I})\|$. From the estimates for $I_{1}$, $I_{2}$ and $I_{3}$, dual norms of these three terms can be bounded by
{\footnotesize
$$
\max\left \{  \|\nabla\times (\mathrm{id}-\mathbb{P})\nabla\times \bm{B}\|+  h^{-1}\|\bm{B}-\bm{B}_{I}\|+    \| (\mathrm{id}-\mathbb{P})\nabla\times \bm{B}\|, \|\nabla\times (\mathrm{id}-\mathbb{P})(\bm{u}\times \bm{B})\| \right \},
$$}
up to a positive constant.


From a general argument of the Brezzi theory, we see that the norms of the solution of \eqref{eqn:formal-system}, i.e.,
$$
\|\left (\bm{u}_{h}-\bm{u}_{I}, \bm{B}_{h}-\bm{B}_{I}\right )\|_{\tilde{\bm{X}}}^{2}+ \|\left (p_{h}-p_{I}, r_{h}-r_{I}\right )\|_{ \bm{Y}}^{2}
$$
 can be bounded by the dual norm   of the right hand side. This completes the proof.
\end{proof}

Combining triangular inequalities and the estimate
$$
\|\nabla\times \bm{B}-\nabla_{h}\times \bm{B}_{h}\|\leq \|\nabla\times \bm{B}-\nabla_{h}\times \bm{B}_{I}\|+\|\nabla_{h}\times \left ( \bm{B}_{I}-  \bm{B}_{h}\right )\|,
$$
we obtain the following quasi-optimal estimate.
\begin{theorem}\label{thm:quasi-orthogonality}
Assume that the condition \eqref{condition-f} holds.  There exists a generic positive constant $\mathcal{C}>0$ depending on $\Omega$, $\|\bm{f}\|_{-1}$, $\|\bm{u}\|_{0, \infty}$ and $\|\bm{B}\|_{0, 3}$, such that for any $(\bm{u}_{I}, \bm{B}_{I})\in \tilde{\bm{X}}_{h}$, $(p_{I}, r_{I})\in \bm{Y}_{h}$, 
\begin{align}\footnotesize\nonumber
\|\bm{u}-&\bm{u}_{h}\|_{1}^{2}+\|\bm{B}-\bm{B}_{h}\|_{\mathrm{div}}^{2}+\|\nabla\times \bm{B}-\nabla_{h}\times \bm{B}_{h}\|^{2}+\|p-p_{I}\|^{2}+\|r-r_{I}\|^{2}\\\nonumber
&\leq \mathcal{C} ( \|\bm{u}-\bm{u}_{I}\|_{1}^{2}
+\|\bm{B}-\bm{B}_{I}\|_{\mathrm{div}}^{2}+\|p-p_{I}\|^{2}+\|r-r_{I}\|^{2}+ h^{-2}\|\bm{B}-\bm{B}_{I}\|^{2}\\&\label{final-estimate-i}
+ \| (\mathrm{id}-\mathbb{P})\nabla\times \bm{B}\|^{2}+\|\nabla\times (\mathrm{id}-\mathbb{P})\nabla\times \bm{B}\|^{2}+\|\nabla\times (\mathrm{id}-\mathbb{P})(\bm{u}\times \bm{B})\|^{2}  ).
\end{align}
\end{theorem}
We remark that $\left \|\nabla\times \bm{B}-\nabla_{h}\times \bm{B}_{h}\right \|=\left \|R_{m}\left ( \bm{j}-\bm{j}_{h}\right )\right \|$ yields an $L^{2}$ error estimate for the current density $\bm{j}$.


The last step is to estimate the convergence order based on the polynomial approximation theory. We recall the following approximation result.
\begin{lemma}\label{lem:approximation-p}
Assume that $H^{h}(\mathrm{curl}, \Omega)$ contains piecewise polynomials of degree $s$. Then the $L^{2}$ projection $\mathbb{P}$ satisfies the approximation property
$$
\left \|\bm{\phi}-\mathbb{P}\bm{\phi}\right \|+ h\left \|\nabla\times \left ( \bm{\phi}-\mathbb{P}\bm{\phi}\right )\right \|\lesssim h^{s+1}\|\bm{\phi}\|_{s+1}, \quad \forall \bm{\phi}\in {H}^{s+1}(\Omega)^{3}.
$$
\end{lemma}
The proof is almost the same as the classical result of $L^{2}$ projections for  Lagrange elements. For completeness, we include the proof here.
\begin{proof}
Let $\Pi_{\mathrm{curl}}^{h}$ be a bounded interpolation operator to $H^{h}(\mathrm{curl}, \Omega)$, for example, defined in \cite{falk2014local}. Then we have
$$
\left \|\nabla\times\left ( \bm{\phi}-\mathbb{P}\bm{\phi}\right )\right \|\leq\left \|\nabla\times\left ( \bm{\phi}-\Pi_{\mathrm{curl}}^{h}\bm{\phi}\right )\right \|+\left \|\nabla\times \Pi_{\mathrm{curl}}^{h}\left (\bm{\phi}-\mathbb{P}\bm{\phi}\right )\right \|.
$$
For the first term on the right hand side, 
$$
\left \|\nabla\times\left ( \bm{\phi}-\Pi_{\mathrm{curl}}^{h}\bm{\phi}\right )\right \|\lesssim h^{s}\|\bm{\phi}\|_{s+1}.
$$
For the second, we use the inverse estimate to get
$$
\left \|\nabla\times \Pi_{\mathrm{curl}}^{h}\left (\bm{\phi}-\mathbb{P}\bm{\phi}\right )\right \|\lesssim h^{-1}\left \| \Pi_{\mathrm{curl}}^{h}\left (\bm{\phi}-\mathbb{P}\bm{\phi}\right )\right \|\lesssim h^{s}\|\bm{\phi}\|_{s+1}.
$$
This implies $h\left \|\nabla\times\left ( \bm{\phi}-\mathbb{P}\bm{\phi}\right )\right \|\lesssim h^{s+1}\|\bm{\phi}|_{s+1}$.

On the other hand, the approximation
$$
\left \|\bm{\phi}-\mathbb{P}\bm{\phi}\right \|\lesssim h^{s+1}\|\bm{\phi}\|_{s+1}
$$
follows directly from the property of the $L^{2}$ projection operator. This completes the proof.
\end{proof}

In the following discussions, we assume that $H^{h}(\mathrm{curl}, \Omega)$, $H^{h}(\mathrm{div}, \Omega)$ and $L^{2}_{h}(\Omega)$ contain piecewise polynomials of degree $r_{1}$, $r_{2}$ and $r_{3}$ respectively.  From the construction of discrete de Rham complexes,  we have $r_{i}=r_{i+1}$ or $r_{i}=r_{i+1}+1$ where $i=1, 2$.  We assume that the approximation space $\bm{V}_{h}$ for the velocity contains piecewise polynomials of degree $s_{u}$ and the discrete pressure space $Q_{h}$ contains piecewise polynomials of degree $s_{p}$. 

We estimate the projection error on the right hand side of \eqref{final-estimate-i} based on Lemma \ref{lem:approximation-p}:
$$
\|\nabla\times (\mathrm{id}-\mathbb{P})(\bm{u}\times \bm{B})\|\lesssim h^{r_{1}}\|\bm{u}\times \bm{B}\|_{r_{1}+1},
$$
$$
\|\nabla\times (\mathrm{id}-\mathbb{P})\nabla\times \bm{B}\|\lesssim h^{r_{1}}\|\nabla\times \bm{B}\|_{r_{1}+1},
$$

Consequently, we have
\begin{align}\nonumber
\|\bm{u}-\bm{u}_{h}\|_{1}^{2}+&\|\bm{B}-\bm{B}_{h}\|_{\mathrm{div}}^{2}+\|\nabla\times \bm{B}-\nabla_{h}\times \bm{B}_{h}\|^{2}+\|p-p_{I}\|^{2}+\|r-r_{I}\|^{2}\\&\nonumber
\leq \mathcal{C} ( h^{2s_{u}}\|\bm{u}\|_{s_{u}+1}+h^{2s_{p}+2}\|p\|_{s_{p}+1}^{2}+h^{2r_{2}}\|\bm{B}\|_{r_{2}+1}^{2}+h^{2r_{1}}(\|\bm{u}\times \bm{B}\|_{r_{1}+1}^{2}\\&\label{error} \quad\quad\quad+\|\nabla\times \bm{B}\|_{r_{1}+1}^{2})+h^{2r_{3}+2}\|r\|_{r_{3}+1}^{2}).
\end{align}

Based on the error estimate \eqref{error}, we can get balanced errors by choosing finite elements such that $r_{1}=r_{2}=r_{3}+1=s_{u}=s_{p}+1$. One particular choice is to use  BDM spaces for the magnetic field $\bm{B}$,  N\'{e}d\'{e}lec spaces of the first kind for the electric field $\bm{E}$. The pressure multiplier $p$ and the magnetic multiplier $r$ may be chosen to have the same order. 

The above analysis  excludes  the lowest order Raviart-Thomas element, but includes the case of the lowest order BDM element. 
We believe that this  restriction is only technical but a more refined estimate is beyond the scope of this paper.


\section{Concluding remarks}\label{sec:conclusion}
In this paper we considered the mixed finite element discretizations of
the stationary MHD system.  Compared to the time-dependent system, the
Gauss's law of magnetic field is an independent equation which cannot
be derived from the Faraday's law. Therefore classical techniques of
Lagrange multipliers are employed to impose the Gauss's law.
The structure-preserving discretization proposed in this paper for the
stationary MHD system preserves both the discrete energy law and most
importantly the Gauss's law $\nabla\cdot \bm{B}=0$.


We note that we can also use a formulation based on $\bm{B}$ and
$\bm{E}$, which is similar to the time-dependent case studied in
\cite{hu2014stable}.  But the well-posedness of such a formulation can
only be established when the Reynolds number $R_{e}$ is assumed to be
sufficiently small. To remove such an undesirable constraint, we
proposed the new formulation using $\bm{B}$ and $\bm{j}$ as the
variables.  Such a formulation was partially motivated by the fact
that the energy is given in terms of $\|\bm j\|$ rather than $\|\bm
E\|$.

These two formulations look similar.  In the finite element
discretization of both cases, we have $\bm{j}=\bm{E}+\mathbb{P}(\bm{u}\times \bm{B})$
(only one variable of $\bm{E}$ and $\bm{j}$ is explicitly used in one
scheme). This is an equation in $H_{0}^{h}(\mathrm{curl},
\Omega)$. The current density $\bm{j}$ and the electric field $\bm{E}$
differ by a nonlinear term, which is projected to
$H_{0}^{h}(\mathrm{curl}, \Omega)$.  But the resulting
 formulations are different due
to the different treatments of the nonlinear term
$\mathbb{P}(\bm{u}\times \bm{B})$ in the discretization
of the Lorentz force term.  We note that in the 
formulation proposed in \cite{hu2014stable}, the Lorentz force term $(\bm{j}, \bm{v}\times \bm{B})$ is
discretized as
$$
(\bm{E}+\bm{u}\times \bm{B}, \bm{v}\times \bm{B}).
$$
Whereas in the formulation proposed in this paper, the corresponding
discretization is as
$$
(\bm{E}+\mathbb{P}(\bm{u}\times \bm{B}), \mathbb{P}(\bm{v}\times \bm{B})).
$$
It is easy to see that these two discretizations are indeed different. 

Similar differences can be also found at other places.  A key point to
get well-posedness is the cancellation of the symmetric nonlinear
coupling terms. Under such a restriction, other parts of the schemes
also have to be different according to the different Lorentz force
terms.  Indeed the energy estimates of these two kinds of formulations
have already shown the difference.  The energy estimates of the
formulation in \cite{hu2014stable} involve $\|\bm{E}+\bm{u}\times
\bm{B}\|^{2}$, while the formulation in this paper involves
$\|\bm{j}\|^{2}=\|\bm{E}+\mathbb{P}(\bm{u}\times \bm{B})\|^{2}$.

As a result of these differences, a careful analysis indicates that
the well-posedness of the formulation proposed in this paper can be
established without any assumption on the size of $R_{e}$.


\section*{Acknowledgement}
The authors would like to thank Mr. Juncai He, Prof. Ragnar Winther and
Dr.~Shuonan Wu for helpful discussions, and the anonymous referees for
valuable suggestions, which have greatly improved the quality of the
paper.

\bibliographystyle{amsplain}     
\bibliography{MHD-copy}{}   

\providecommand{\bysame}{\leavevmode\hbox to3em{\hrulefill}\thinspace}
\providecommand{\MR}{\relax\ifhmode\unskip\space\fi MR }
\providecommand{\MRhref}[2]{%
  \href{http://www.ams.org/mathscinet-getitem?mr=#1}{#2}
}
\providecommand{\href}[2]{#2}
\begin{thebibliography}{10}

\bibitem{Arnold.D;Falk.R;Winther.R.2006a}
Douglas~N Arnold, Richard~S Falk, and Ragnar Winther, \emph{{Finite element
  exterior calculus, homological techniques, and applications}}, Acta numerica
  \textbf{15} (2006), 1.

\bibitem{Arnold.D;Falk.R;Winther.R.2010a}
\bysame, \emph{{Finite element exterior calculus: from Hodge theory to
  numerical stability}}, Bulletin of the American Mathematical Society
  \textbf{47} (2010), no.~2, 281--354.

\bibitem{Boffi.D;Brezzi.F;Fortin.M.2013a}
Daniele Boffi, Franco Brezzi, and Michel Fortin, \emph{{Mixed Finite Element
  Methods and Applications}}, Springer, 2013.

\bibitem{Bossavit.A.1998a}
Alain Bossavit, \emph{Computational electromagnetism: variational formulations,
  complementarity, edge elements}, Academic Press, 1998.

\bibitem{Brackbill.J;Barnes.D.1980a}
Jeremiah~U Brackbill and Daniel~C Barnes, \emph{The effect of nonzero
  $\nabla\cdot b$ on the numerical solution of the magnetohydrodynamic
  equations}, Journal of Computational Physics \textbf{35} (1980), no.~3,
  426--430.

\bibitem{brenner2008mathematical}
Susanne~C Brenner and Ridgway Scott, \emph{The mathematical theory of finite
  element methods}, vol.~15, Springer Science \& Business Media, 2008.

\bibitem{Chen.L;Wang.M;Zhong.L.2014a}
Long Chen, Ming Wang, and Lin Zhong, \emph{{Convergence analysis of triangular
  MAC schemes for two dimensional Stokes equations}}, Journal of Scientific
  Computing (2014), 1--29.

\bibitem{christiansen2011topics}
Snorre~H Christiansen, Hans~Z Munthe-Kaas, and Brynjulf Owren, \emph{Topics in
  structure-preserving discretization}, Acta Numerica \textbf{20} (2011),
  1--119.

\bibitem{Dai.W;Woodward.P.1998b}
Wenlong Dai and Paul~R Woodward, \emph{On the divergence-free condition and
  conservation laws in numerical simulations for supersonic
  magnetohydrodynamical flows}, The Astrophysical Journal \textbf{494} (1998),
  no.~1, 317.

\bibitem{falk2014local}
Richard~S Falk and Ragnar Winther, \emph{Local bounded cochain projections},
  Mathematics of Computation \textbf{83} (2014), no.~290, 2631--2656.

\bibitem{Girault.V;Raviart.P.1986a}
Vivette Girault and Pierre-Arnaud Raviart, \emph{{Finite element methods for
  Navier-Stokes equations: theory and algorithms}}, vol.~5, Springer Science \&
  Business Media, 2012.

\bibitem{Gunzburger.M;Meir.A;Peterson.J.1991a}
Max~D Gunzburger, Amnon~J Meir, and Janet~S Peterson, \emph{On the existence,
  uniqueness, and finite element approximation of solutions of the equations of
  stationary, incompressible magnetohydrodynamics}, Mathematics of Computation
  \textbf{56} (1991), no.~194, 523--563.

\bibitem{Hiptmair.R.2002a}
Ralf Hiptmair, \emph{{Finite elements in computational electromagnetism}}, Acta
  Numerica \textbf{11} (2002), no.~July 2003, 237--339.

\bibitem{hu2014stable}
Kaibo Hu, Yicong Ma, and Jinchao Xu, \emph{{Stable finite element methods
  preserving $\nabla\cdot \bm{B} = 0$ exactly for MHD models}}, Numerische
  Mathematik (2014), 1--26.

\bibitem{ma2016robust}
Yicong Ma, Kaibo Hu, Xiaozhe Hu, and Jinchao Xu, \emph{{Robust preconditioners
  for incompressible MHD models}}, Journal of Computational Physics
  \textbf{316} (2016), 721--746.

\bibitem{Ni.M;Li.J.2012a}
Ming-Jiu Ni and Jun-Feng Li, \emph{{A consistent and conservative scheme for
  incompressible MHD flows at a low magnetic Reynolds number. Part III: On a
  staggered mesh}}, Journal of Computational Physics \textbf{231} (2012),
  no.~2, 281--298.

\bibitem{Ni.M;Munipalli.R;Morley.N;Huang.P;Abdou.M.2007a}
Ming-Jiu Ni, Ramakanth Munipalli, Neil~B. Morley, Peter Huang, and Mohamed~a.
  Abdou, \emph{{A current density conservative scheme for incompressible MHD
  flows at a low magnetic Reynolds number. Part I: On a rectangular collocated
  grid system}}, Journal of Computational Physics \textbf{227} (2007), no.~1,
  174--204.

\bibitem{Schotzau.D.2004a}
Dominik Sch\"{o}tzau, \emph{{Mixed finite element methods for stationary
  incompressible magneto--hydrodynamics}}, Numerische Mathematik (2004),
  771--800.

\bibitem{Yang.Z;Zhou.T;Chen.H;Ni.M.2010a}
Zhiyi Yang, Tao Zhou, Hongli Chen, and Ming-Jiu Ni, \emph{{Numerical study of
  MHD pressure drop in rectangular ducts with insulating coatings}}, Fusion
  Engineering and Design \textbf{85} (2010), no.~10-12, 2059--2064.

\end{thebibliography}

\end{document}